\newtheorem{theorem}{Theorem}[section]
\newtheorem{corollary}[theorem]{Corollary}
\newtheorem{lemma}[theorem]{Lemma}
\newtheorem{prop}[theorem]{Proposition}
\newtheorem{rem}[theorem]{Remark}
\theoremstyle{definition}
\numberwithin{equation}{section}
\newcommand{\R}{{\mathds R}}
\renewcommand{\leq}{\leqslant}
\renewcommand{\le}{\leqslant}
\renewcommand{\geq}{\geqslant}
\renewcommand{\ge}{\geqslant}
\newcommand{\eps}{\varepsilon }
\renewcommand{\epsilon}{\varepsilon }
\newcommand{\Per}{\,{\rm{Per}}}
\newcommand{\no}{ \boldsymbol{\mathcal{E}} }
\def\XXint#1#2#3{{\setbox0=\hbox{$#1{#2#3}{\int}$ }
\vcenter{\hbox{$#2#3$ }}\kern-.6\wd0}}
\newlength{\defbaselineskip}
\newcommand{\setlinespacing}[1]
           {\setlength{\baselineskip}{#1 \defbaselineskip}}
\author[S. Dipierro]{Serena Dipierro}
\address[Serena Dipierro]{Maxwell Institute for Mathematical Sciences and School of Mathematics,
University of Edinburgh, James Clerk Maxwell Building,
Peter Guthrie Tait Road, Edinburgh EH9 3FD, United Kingdom, and
Institut f\"ur Analysis und Numerik,
Otto-von-Guericke Universit\"at Magdeburg,
Universit\"atsplatz 2, 39106 Magdeburg, Germany}
\email{serena.dipierro@ed.ac.uk}
\author[O. Savin]{Ovidiu Savin}
\address[Ovidiu Savin]{Department of Mathematics, Columbia University,
2990 Broadway,
New York NY 10027, USA}
\email{savin@math.columbia.edu}
\author[E. Valdinoci]{Enrico Valdinoci}
\address[Enrico Valdinoci]{Weierstra{\ss} Institut f\"ur Angewandte Analysis und Stochastik, Hausvogteiplatz 11A, 10117 Berlin, Germany,
and
Dipartimento di Matematica, Universit\`a degli studi di Milano,
Via Saldini 50, 20133 Milan, Italy}
\email{enrico.valdinoci@wias-berlin.de}
\begin{document}

\subjclass[2010]{35R11, 35R35, 49Q20, 49Q05.}

\keywords{Fractional perimeter, minimization problem, monotonicity formula, classification of cones.}

\thanks{{\it Acknowledgements}.
The first author has been supported by EPSRC grant EP/K024566/1
``Monotonicity formula methods for nonlinear PDEs'' and Humboldt
Foundation.
The second author has been supported by
NSF grant DMS-1200701.
The third author has been supported by ERC grant 277749 ``EPSILON Elliptic
Pde's and Symmetry of Interfaces and Layers for Odd Nonlinearities''
and PRIN grant 201274FYK7
``Aspetti variazionali e
perturbativi nei problemi differenziali nonlineari''.}

\title[A nonlocal free boundary problem]{A nonlocal free boundary problem}

\begin{abstract}
Given~$s,\sigma\in(0,1)$ and a bounded domain~$\Omega\subset\R^n$, 
we consider the following minimization problem of $s$-Dirichlet
plus $\sigma$-perimeter type
$$ [u]_{ H^s(\R^{2n}\setminus(\Omega^c)^2) }
 + \Per_\sigma\left(\{u>0\},\Omega\right), $$ 
where~$[ \cdot]_{H^s}$ is the fractional Gagliardo
seminorm and $\Per_\sigma$ is the fractional perimeter. 

Among other results,
we prove a monotonicity formula for the minimizers, glueing lemmata,
uniform energy bounds, convergence results,
a regularity theory for the planar cones
and a trivialization result for the flat case.

The classical free boundary problems are limit cases of the one
that we consider in this paper, 
as $s\nearrow1$, $\sigma\nearrow1$ or~$\sigma\searrow0$. 
\end{abstract}

\maketitle

\section{Introduction}\label{8fdsv3ws23456yfd}
In this paper we deal with 
a free boundary problem driven by some nonlocal features.
The nonlocal structures that we consider appear both in the term
that is sometimes related to ``elastic'' atomic interactions and in the so-called ``surface tension''
potential. 

These two features are allowed to have different nonlocal behaviors, namely we parameterize
them with two different fractional parameters~$s, \sigma\in(0,1)$. The classical free boundary
problems appear in the limit of our framework by taking limits
either in~$s$ (as $s\nearrow1$)
or in~$\sigma$ (as~$\sigma\nearrow1$ or~$\sigma\searrow0$), or both.
\medskip

More precisely, we will consider
here the minimization of an energy functional 
that involves a fractional gradient and a nonlocal perimeter. 
Given~$s,\sigma\in(0,1)$ and a bounded and 
Lipschitz domain~$\Omega\subset\R^n$, 
we consider 
\begin{equation}\label{functional}
\mathcal F(u,E):= \iint_{\R^{2n}\setminus (\Omega^c)^2}\frac{|u(x)-u(y)|^2}{|x-y|^{n+2s}}\, dx\,dy + \Per_\sigma\left(E,\Omega\right),  
\end{equation}
where $E$ is the positivity set for $u$ (more precisely,
$u\ge0$ a.e. in $E\cap\Omega$ and
$u\le0$ a.e. in $E^c\cap\Omega$).
As customary, the superscript~$c$ used here above denotes
the complementary set operation, i.e.~$\Omega^c:=\R^n\setminus\Omega$.
The~$\sigma$-fractional perimeter~$\Per_\sigma(E,\Omega)$ of a set~$E$ in~$\Omega$ 
was introduced in~\cite{CRS} and it is defined as 
\begin{equation}\begin{split}\label{sper}
\Per_\sigma(E,\Omega):=& \mathcal L(E\cap\Omega,E^c\cap\Omega) \\
& \quad + \mathcal L(E\cap\Omega, E^c\cap\Omega^c) 
+\mathcal L(E\cap\Omega^c, E^c\cap\Omega),  
\end{split}\end{equation}
where the interaction~$\mathcal L$ is the following
\begin{equation}\label{idid}
\mathcal L(A,B):=\iint_{A\times B} \frac{dx\,dy}{|x-y|^{n+\sigma}} \end{equation}
for any disjoint, measurable sets~$A$ and~$B$.

The nonlocal perimeter converges to the classical perimeter as~$\sigma\nearrow 1$ 
and to the Lebesgue measure of~$E$ as~$\sigma\searrow 0$ (up to multiplicative constants), see~\cite{BBM, DAVILA, CV1, ADM, MS, DFPV} for precise statements.

In~\cite{CSV} the authors consider a minimization problem that corresponds 
to~\eqref{functional} in the case~$s=1$, namely 
\begin{equation}\label{caso1}
\int_{\Omega}|\nabla u(x)|^2\,dx +\Per_\sigma\left(\{u>0\},\Omega\right). 
\end{equation} 
They use blow-up analysis to obtain regularity results for minimizers and for 
the free boundaries. 
When~$\sigma\searrow 0$, the functional in \eqref{caso1} reduces to a classical free boundary problem
related to fluid dynamics and that has been extensively studied in the literature
after the pioneer work in \cite{alt, alt2}.
On the other hand, when~$\sigma\nearrow 1$, the energy in~\eqref{caso1} reduces to the problem studied
in \cite{salsa}, where the energy functional is 
a competition between the classical Dirichlet form and the perimeter of the interface.

Thus, the energy functional in \eqref{functional} that we study here
follows in the energy framework, in which both the quadratic form
and the interface energy appearing in the functional are of nonlocal type.
The problem has also the relevant feature
of allowing different types of nonlocal behaviors
in the two components of the energy, which may
have concrete applications, since the two terms may come
from different types of long range interactions.

For other recent results on fractional free boundary problems see, 
for instance, \cite{CafSire, DeRo, DeSa, Allen}.
\medskip

The variational notion of minimizers that we consider in this
paper is the following. Fixed $E_0\subseteq\R^n$ with locally finite $\sigma$-perimeter and
$\varphi\in H^s_{\rm loc}(\R^n)$ with
$\varphi\ge0$ a.e. in $E_0$ and
$\varphi\le0$ a.e. in $E_0^c$, we say that $(u,E)$
is a minimizing pair (in the domain $\Omega$ with
external datum $\varphi$) if 
${\mathcal{F}}(u,E)$ attains the minimal possible
value among all the
functions $v$
such that 
\begin{equation}\label{Def 1}
{\mbox{$v-\varphi\in H^s(\R^n)$ with $v=\varphi$
a.e. in $\Omega^c$}} 
\end{equation}
and all the measurable sets $F\subseteq\R^n$
with $F\setminus\Omega=E_0\setminus\Omega$ and such that
\begin{equation}\label{Def 2}
{\mbox{$v\ge0$ a.e. in $F\cap\Omega$ and
$v\le0$ a.e. in $F^c\cap\Omega$.}}
\end{equation}
In spite of its technical flavor, the definition above
can be intuitively understood by saying, roughly speaking,
that the function $u$ minimizes the energy functional
among all the competitors $v$ that coincide with $u$ outside
the domain $\Omega$ (the technicality is to formally state
that $F$ is the positivity set of $v$ for which we need
to compute the $\sigma$-perimeter).

The existence of minimizing pairs will be guaranteed by
the forthcoming Lemma~\ref{MIMI}
and it follows from the direct method joined with a suitable
fractional compact embedding.\medskip

We will show that the energy of a minimizing
pair can be bounded uniformly: more precisely,
if~$(u,E)$ is a minimizing pair in a given ball,
then the energy in a smaller ball is bounded, according
to the next result:

\begin{theorem}[Uniform energy estimates]\label{ENERGY}
Let~$(u,E)$ be a mininimizing pair in~$B_2$. Then
$$ \iint_{\R^{2n}\setminus(B_1^c)^2} \frac{\big| u(x)-u(y)\big|^2
}{|x-y|^{n+2s}}\,dx\,dy +
\Per_\sigma(E,B_1) \le C\,\left(1+
\int_{\R^n}
\frac{|u(y)|^2}{1+|y|^{n+2s}}\,dy\right),$$
for some~$C>0$ only depending on~$n$ and~$s$.
\end{theorem}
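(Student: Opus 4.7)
The plan is to test the minimality of $(u,E)$ against a concrete competitor $(v,F)$ built by cutting off $u$ to zero in a smaller ball, and to control $\mathcal F(v,F)$ explicitly in terms of the weighted tail of $u$.

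For the construction, I would fix a smooth radial cut-off $\eta$ with $\eta\equiv 0$ on $B_1$, $\eta\equiv 1$ on $\R^n\setminus B_{3/2}$, $0\le\eta\le 1$ and $|\nabla\eta|\le C$, and set $v:=\eta u$, $F:=E\cup B_1$. Then $v=u$ on $B_2^c$ (so~\eqref{Def 1} holds), $v\equiv 0$ on $B_1$ (making the sign conditions~\eqref{Def 2} automatic there), and on the transition annulus $B_{3/2}\setminus B_1$ one has $\mathrm{sgn}(v)=\mathrm{sgn}(u)$ while $F$ coincides with $E$, so~\eqref{Def 2} holds throughout. Thus $(v,F)$ is an admissible competitor.

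A direct bookkeeping computation, using $F\cap B_2=B_1\cup(E\cap(B_2\setminus B_1))$ and the decomposition $B_1=(E\cap B_1)\cup(E^c\cap B_1)$, yields
\[
\Per_\sigma(F,B_2)-\Per_\sigma(E,B_2)=\mathcal{L}(E^c\cap B_1,E^c\cap B_1^c)-\mathcal{L}(E^c\cap B_1,E)\le\mathcal{L}(B_1,B_1^c),
\]
and the last quantity is a universal constant depending only on $n$ and $\sigma$. For the Dirichlet part, the algebraic identity $\eta(x)u(x)-\eta(y)u(y)=\eta(x)(u(x)-u(y))+u(y)(\eta(x)-\eta(y))$ combined with Cauchy--Schwarz yields
\[
|v(x)-v(y)|^2\le 2\eta(x)^2|u(x)-u(y)|^2+2u(y)^2|\eta(x)-\eta(y)|^2.
\]
The second term is controlled by $C\|u\|_{L^2(B_2)}^2$ via $|\eta(x)-\eta(y)|\le C|x-y|$ and the local integrability of the kernel $|x-y|^{2-n-2s}$ (valid since $s<1$); since $1+|y|^{n+2s}$ is bounded on $B_2$, this is in turn dominated by the weighted tail on the right-hand side of the estimate.

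The core difficulty is the remaining integral $\iint\eta(x)^2|u(x)-u(y)|^2/|x-y|^{n+2s}\,dx\,dy$, which the naive bound $\eta\le 1$ only controls by $[u]^2_{H^s(\R^{2n}\setminus(B_2^c)^2)}$ itself, leading to a circular inequality. The key observation is that $\eta$ vanishes on $B_1$, so the integrand is zero whenever $x\in B_1$; a careful domain decomposition shows this integral equals $[u]^2_{H^s(\R^{2n}\setminus(B_2^c)^2)}$ minus $\iint_{B_1\times\R^n}|u(x)-u(y)|^2/|x-y|^{n+2s}\,dx\,dy$, the latter controlling $[u]^2_{H^s(\R^{2n}\setminus(B_1^c)^2)}$ up to a universal constant. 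To close the estimate one must refine the Cauchy--Schwarz step (using the weighted bound $(1+\varepsilon)\eta(x)^2|u(x)-u(y)|^2+(1+\varepsilon^{-1})u(y)^2|\eta(x)-\eta(y)|^2$) together with an averaging argument on the inner radius of $\eta$, which can be formalised via the glueing lemma advertised in the introduction. Once the absorption is carried out, minimality yields $\mathcal F(u,E)\le C(1+\int_{\R^n}|u(y)|^2/(1+|y|^{n+2s})\,dy)$ in $B_2$, and the monotonicity of $[\,\cdot\,]^2_{H^s(\R^{2n}\setminus(B_r^c)^2)}$ and $\Per_\sigma(\,\cdot\,,B_r)$ in $r$ descends the bound to $B_1$ as stated.
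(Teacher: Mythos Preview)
Your competitor construction and perimeter bookkeeping are fine and match the paper's choice (there $v:=(1-\eta)u$ with $\eta\equiv 1$ on $B_1$, and $F$ obtained from Lemma~\ref{6df7u8gihojgfff55}, which yields the sharper estimate $\Per_\sigma(F,B_{3/2})-\Per_\sigma(E,B_{3/2})\le -\Per_\sigma(E,B_1)+\mathcal L(B_1,B_1^c)$). The real issue is the Dirichlet term, and the absorption you sketch does not close.

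With any Cauchy--Schwarz splitting you obtain, after symmetrising in $(x,y)$,
\[
[u]^2_{B_2}\;\le\;(1+\eps)\Big([u]^2_{B_2}-\tfrac12\iint_{\R^{2n}\setminus(B_2^c)^2}(2-\eta(x)^2-\eta(y)^2)\,\frac{|u(x)-u(y)|^2}{|x-y|^{n+2s}}\Big)+C_\eps(1+\text{tail}),
\]
and since $2-\eta(x)^2-\eta(y)^2$ vanishes on $B_{3/2}^c\times B_{3/2}^c$ but is only guaranteed to be positive on $B_1\times B_1$, the subtracted piece is at best $\iint_{B_1\times B_1}\cdots$, \emph{not} $[u]^2_{B_1}$ (the cross interactions $B_1\times B_1^c$ are lost). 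Rearranging gives $(1+\eps)\iint_{B_1\times B_1}\cdots\le \eps\,[u]^2_{B_2}+C_\eps(1+\text{tail})$, which is circular: it bounds a smaller quantity by a larger one. Varying the cut-off radius and averaging does not help, because the gain is always on the \emph{interior} piece and the loss is on the \emph{larger} one; there is no hole-filling iteration available. The glueing lemmata of Section~\ref{GLUE} concern surgery of competitors, not this absorption.

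The paper breaks the circularity by a \emph{different} variational test. In Corollary~\ref{cor-energy-bound} one perturbs multiplicatively, $u_\eps:=(1+\eps\eta^2)u$, which keeps $E$ fixed; since $(u_\eps,E)$ is admissible for all small $|\eps|$, the \emph{first variation} vanishes:
\[
\iint_{\R^{2n}\setminus(B_2^c)^2}\frac{(u(x)-u(y))(\eta^2(x)u(x)-\eta^2(y)u(y))}{|x-y|^{n+2s}}\,dx\,dy=0.
\]
Expanding this identity (no Cauchy--Schwarz on the leading term) produces a genuine Caccioppoli inequality: with $\eta\equiv 1$ on $B_1$,
\[
\iint_{\R^{2n}\setminus(B_1^c)^2}\frac{|u(x)-u(y)|^2}{|x-y|^{n+2s}}\,dx\,dy\le C\iint_{\R^{2n}\setminus(B_2^c)^2}\frac{u(y)^2|\eta(x)-\eta(y)|^2}{|x-y|^{n+2s}}\,dx\,dy\le C\int_{\R^n}\frac{u(y)^2}{1+|y|^{n+2s}}\,dy,
\]
the last step being Lemma~\ref{6ydhwiiw}. (Incidentally, your claim that $\iint u(y)^2|\eta(x)-\eta(y)|^2/|x-y|^{n+2s}$ is controlled by $\|u\|_{L^2(B_2)}^2$ is not correct: the region $x\in B_1$, $y\in B_2^c$ contributes the full weighted tail.) Once the Gagliardo part is bounded, your competitor $(v,F)$ then gives the perimeter estimate exactly as in the paper. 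So the missing idea is this Euler--Lagrange step; the single-competitor route cannot furnish the Dirichlet bound by itself.
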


The proof of Theorem~\ref{ENERGY}
relies on appropriate gluing results that are interesting in themselves
(roughly speaking, they allow us to change an admissible pair
outside a given domain, by controlling the energy produced by
the interpolation).
\medskip

For this, it is useful to consider an associated extension problem. 
That is, we set~$\R^{n+1}_+:=\{(x,z)\in\R^n\times\R {\mbox{ s.t. }}z>0\}$,  
and, given a function~$u:\R^n\rightarrow\R$, we associate a function~$\overline u$ defined in~$\R^{n+1}_+$ as 
\begin{equation}\label{ext}
\overline u(\cdot,z)=u*P_s(\cdot,z), {\mbox{ where }} 
P_s(x,z):= c_{n,s}\frac{z^{2s}}{(|x|^2+z^2)^{(n+2s)/2}}.  
\end{equation}
Here~$c_{n,s}$ is a normalizing constant depending on~$n$ and~$s$. 

Moreover, given a measurable set~$E\subset\R^n$ 
we associate a function~$U$ defined in~$\R^{n+1}_+$ as 
\begin{equation}\label{extE}
U(\cdot,z)=(\chi_E-\chi_{E^c})*P_\sigma(\cdot,z), {\mbox{ where }} 
P_\sigma(x,z):= c_{n,\sigma}\frac{z^\sigma}{(|x|^2+z^2)^{(n+\sigma)/2}},  
\end{equation}
and~$c_{n,\sigma}$ is a normalizing constant depending on~$n$ and~$\sigma$
(these constants are only needed to normalize the integral of $P_s$ and $P_\sigma$).

We will denote the extended variable as~$X:=(x,z)\in\R^{n+1}_+$, 
where~$x\in\R^n$ and~$z>0$. Moreover, $B_r:=\{|x|<r\}$ is the ball of radius $r$ 
in $\R^n$ and~$\mathcal B^+_r:=\{|X|<r\}$ is the ball of radius~$r$ in~$\R^{n+1}_+$.

The role played by the extensions defined in~\eqref{ext}
and~\eqref{extE} is to reduce the original nonlocal problem
to a local problem in an extended space (this will
be made precise in Proposition~\ref{char}). Roughly
speaking, the extended functions minimize
a weighted Dirichlet energy for a given trace, whose
weighted Neumann condition on the trace reproduces the original
nonlocal functional.
\medskip

We will study in detail
the extended problem in Section~\ref{sec4},
where we will also find equivalent minimizing conditions between the original functional in~$(u,E)$
and an extended
functional in~$(\overline{u},U)$ (see in particular Proposition~\ref{char}).
Here we just mention that the notion of minimization in the extended variables in a domain~$\Omega\subset\R^{n+1}$
requires not only that the competing functions agree near~$\partial\Omega$, but also a consistency condition on
the trace~$\{z=0\}$, where the functions reduce to characteristic functions of sets.
Namely, we say that~$(\overline{u},U)$ is a minimizing pair for the extended problem
in~$\Omega\subset\R^{n+1}$ if
\begin{equation*}\begin{split}
& \int_{\Omega_+}z^{1-2s}|\nabla\overline u|^2\, dX+c_{n,s,\sigma}
\int_{\Omega_+}z^{1-\sigma}|\nabla U|^2\, dX \\
&\qquad \le \int_{\Omega_+}z^{1-2s}|\nabla\overline v|^2\, dX+c_{n,s,\sigma}\int_{\Omega_+}z^{1-\sigma}|\nabla V|^2\, dX 
\end{split}\end{equation*}
for every  functions~$\overline v$ and~$V$
that satisfy the following conditions:
\begin{itemize}
\item[i)] $V=U$ in a neighborhood of~$\partial\Omega$,
\item[ii)] the trace of~$V$ on~$\{z=0\}$ is~$\chi_F-\chi_{F^c}$ 
for some set~$F\subset\R^n$, 
\item[iii)] $\overline v=\overline u$ 
in a neighborhood of~$\partial\Omega$, 
and~$\overline v\big|_{\{z=0\}}\ge 0$ a.e. in~$F$,
$\overline v\big|_{\{z=0\}}\le 0$ a.e. in~$F^c$. 
\end{itemize}
\medskip

In this setting, we can use glueing techniques to prove convergence of
minimizing pairs of the extended problem, as stated in the following result:

\begin{theorem}[Convergence of minimizers] \label{prop:conv_ext}
Let~$(\overline{u}_m,U_m)$ be a sequence of minimizing pairs for the extended problem
in~$\mathcal{B}_2^+$. Suppose that~$\overline u_m$
is the extension of~$u_m$ as in~\eqref{ext}, and 
\begin{equation}\label{se00}
{u}_m\rightarrow {u} \ {\mbox{ in }}L^\infty(B_2),
\quad
\overline{u}_m\rightarrow\overline{u} \ {\mbox{ in }}L^\infty(\mathcal{B}^+_2) \
{\mbox{ and }} \ U_m\rightarrow U \ {\mbox{ in }}L^2_{\sigma/2}(\mathcal{B}^+_2),
\end{equation}
as~$m\rightarrow +\infty$, for some couple~$(\overline{u},U)$, with~$\overline{u}$
continuous in~$\overline{\R^{n+1}_+}$.

Then $(\overline{u},U)$ is a minimizing pair in~$\mathcal{B}^+_{1/2}$.

Moreover
\begin{equation}\label{TESI}\begin{split}
&\lim_{m\to+\infty}\int_{\mathcal{B}^+_{1}}z^{1-2s}|\nabla\overline{u}_m|^2\,dX
= \int_{\mathcal{B}^+_{1}}z^{1-2s}|\nabla\overline{u}|^2\,dX\\
{\mbox{and }}\qquad&
\lim_{m\to+\infty}\int_{\mathcal{B}^+_{1}}z^{1-\sigma}|\nabla U_m|^2\,dX
=\int_{\mathcal{B}^+_{1}}z^{1-\sigma}|\nabla U|^2\,dX.\end{split}
\end{equation}
\end{theorem}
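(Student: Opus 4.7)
The plan is to combine lower semicontinuity of the weighted Dirichlet energies under weak convergence with a glueing construction that, via the minimality of $(\overline{u}_m,U_m)$, supplies a matching $\limsup$-bound. As a first step, I would apply Theorem~\ref{ENERGY} (translated to the extended setting by Proposition~\ref{char}, and combined with a standard covering argument to enlarge the ball) to obtain that $\int_{\mathcal{B}^+_r}z^{1-2s}|\nabla\overline{u}_m|^2\,dX$ and $\int_{\mathcal{B}^+_r}z^{1-\sigma}|\nabla U_m|^2\,dX$ are uniformly bounded in $m$ for every $r<2$. Extracting a subsequence, $\nabla\overline{u}_m\rightharpoonup\nabla\overline{u}$ weakly in $L^2(z^{1-2s}\,dX;\mathcal{B}^+_r)$ and $\nabla U_m\rightharpoonup\nabla U$ weakly in $L^2(z^{1-\sigma}\,dX;\mathcal{B}^+_r)$ for each such $r$, the limits being identified through~\eqref{se00}. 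Lower semicontinuity of the $L^2$-norm under weak convergence then delivers the $\liminf$-direction of~\eqref{TESI} on $\mathcal{B}^+_1$, and in fact on every open subset of $\mathcal{B}^+_r$.

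Next, given any admissible competitor $(\overline{v},V)$ for $(\overline{u},U)$ in $\mathcal{B}^+_{1/2}$, I would fix $\rho\in(1/2,1)$ and invoke the glueing lemmata of Section~\ref{sec4} to produce $(\overline{v}_m,V_m)$ that equals $(\overline{v},V)$ on $\mathcal{B}^+_{1/2}$, equals $(\overline{u}_m,U_m)$ near $\partial\mathcal{B}^+_\rho$, and satisfies the trace and sign conditions i)--iii). Because $(\overline{v},V)$ agrees with $(\overline{u},U)$ near $\partial\mathcal{B}^+_{1/2}$ and $(\overline{u}_m,U_m)\to(\overline{u},U)$ as in~\eqref{se00}, the glueing estimate controls the energy of $(\overline{v}_m,V_m)$ on the annulus $\mathcal{B}^+_\rho\setminus\mathcal{B}^+_{1/2}$ by the energy of $(\overline{u},U)$ on the same annulus, up to an error vanishing as $m\to\infty$.

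The minimality of $(\overline{u}_m,U_m)$ on $\mathcal{B}^+_\rho$ then gives
\begin{equation*}
\mathcal{E}_m(\mathcal{B}^+_{1/2})+\mathcal{E}_m(\mathcal{B}^+_\rho\setminus\mathcal{B}^+_{1/2})\le \mathcal{E}(\overline{v},V;\mathcal{B}^+_{1/2})+\mathcal{E}_{\mathrm{glue},m}(\mathcal{B}^+_\rho\setminus\mathcal{B}^+_{1/2}),
\end{equation*}
where $\mathcal{E}$ denotes the extended energy. Combining the annular $\liminf$-bound from the first step with the glueing bound from the second step, the annular contributions cancel in the limit $m\to\infty$, yielding $\limsup_m\mathcal{E}_m(\mathcal{B}^+_{1/2})\le\mathcal{E}(\overline{v},V;\mathcal{B}^+_{1/2})$. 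Paired with the $\liminf$-bound, this is the minimality of $(\overline{u},U)$ on $\mathcal{B}^+_{1/2}$. Rerunning the same argument on $\mathcal{B}^+_1$ with $\rho\in(1,2)$ and $(\overline{v},V)=(\overline{u},U)$ yields $\limsup_m\mathcal{E}_m(\mathcal{B}^+_1)\le\mathcal{E}(\overline{u},U;\mathcal{B}^+_1)$, which combined with the separate $\liminf$-bounds for each of the two pieces of the energy gives~\eqref{TESI}.

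The main obstacle is the glueing construction: the trace of $V_m$ on $\{z=0\}$ must have the form $\chi_{F_m}-\chi_{F_m^c}$ for some set $F_m$, and simultaneously $\overline{v}_m|_{\{z=0\}}$ must be sign-compatible with $F_m$ as in iii), while $(\overline{v}_m,V_m)$ matches $(\overline{v},V)$ inside and $(\overline{u}_m,U_m)$ outside. This is exactly the situation addressed by the glueing lemmata of Section~\ref{sec4}, and their quantitative control on the annular glueing energy is what drives the $\limsup$-step of the argument.
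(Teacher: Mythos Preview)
Your strategy differs from the paper's in an essential way, and as written it has a circularity. The paper establishes the two limits in~\eqref{TESI} \emph{first}, by methods independent of the glueing step: the $\overline u$-limit is Lemma~\ref{yys6}, which exploits the Euler--Lagrange identity $\int z^{1-2s}\nabla\overline u_m\cdot\nabla(\phi\,\overline u_m)\,dX=0$ (a consequence of $s$-harmonicity of $u_m$ on $\{u_m\neq0\}$, Lemma~\ref{SH}) and the integration-by-parts formula of Lemma~\ref{parts2} to rewrite $\int z^{1-2s}\phi\,|\nabla\overline u_m|^2\,dX$ as $\tfrac12\int\overline u_m^{\,2}\,\mathrm{div}(z^{1-2s}\nabla\phi)\,dX$, which passes to the limit directly under the uniform convergence in~\eqref{se00}. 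The $U$-limit is obtained as in Proposition~9.1 of~\cite{CRS}. Only \emph{after}~\eqref{TESI} is in hand does the paper run the glueing comparison for minimality, and it explicitly uses~\eqref{TESI} to control the annular error: see~\eqref{se80}, where $\int_{\mathcal B^+_{1+\epsilon}\setminus\mathcal B^+_{1-\epsilon}}z^{1-2s}|\nabla\overline u_m|^2\,dX$ is bounded by covering the annulus with small balls and invoking~\eqref{TESI} on each.

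Your proposal reverses this order and tries to get both~\eqref{TESI} and minimality from the same glueing comparison. The gap is the claim that the glued energy on the annulus is bounded by $\mathcal E(\overline u,U;\text{annulus})$ plus an error ``vanishing as $m\to\infty$''. The actual glueing error in Lemma~\ref{LE conv} (formulas~\eqref{A2}--\eqref{AA2}) contains the term $C\int_{\mathcal B^+_{1+\epsilon}\setminus\mathcal B^+_{1-\epsilon}}z^{1-2s}|\nabla\overline u_m|^2\,dX$, which does \emph{not} tend to zero as $m\to\infty$ for fixed $\epsilon$; uniform energy bounds only say it is $O(1)$. Sending $\epsilon\to0$ afterward requires $\limsup_m$ of this thin-shell integral to go to zero with $\epsilon$, which is precisely the non-concentration statement~\eqref{TESI} you are trying to prove --- so your ``annular contributions cancel'' step is circular. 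One can break the circle with a careful good-radius/averaging selection argument, but you have not supplied it, and without it the $\limsup$ half of your argument does not close. (Incidentally, the glueing lemmata are in Section~\ref{GLUE}, not Section~\ref{sec4}.)
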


A particularly important case of convergence is
given by the blow-up limit.
This is also related
to the study of the minimizing pairs
that possess suitable homogeneity properties, and in particular
the ones induced by the natural scaling of the functional. For this,
we say that a minimizing pair $(u,E)$ is a minimizing cone
if $u$ is homogeneous of degree $s-\frac\sigma2$
and $E$ is a cone (i.e., for any $t>0$, $tx\in E$
if and only if $x\in E$).


In this framework, we exploit
Theorems~\ref{ENERGY} and~\ref{prop:conv_ext}, combined with
some arguments in~\cite{CRS}, and we obtain the following relation between blow-up
limits and minimizing cones:

\begin{theorem}[Blow-up cones]\label{CONI 0}
Let~$s>\sigma/2$ and~$(u,E)$ be a minimizing pair in~$B_1$, with~$0\in\partial E$.
For any~$r>0$ let
\begin{equation}\label{rescaled} u_r(x):=r^{\frac\sigma2 -s }u(rx) \ {\mbox{ and }} \
E_r:=\frac1r E.\end{equation}
Assume that~$u\in C^{s-\frac\sigma2}(\R^n)$.
Then there exist a minimizing cone~$(u_0, E_0)$
and a sequence~$r_k\to0$ such that~$u_{r_k}\to u_0$
in~$L^\infty_{\rm loc}(\R^n)$ and~$E_{r_k}\to E_0$ in~$L^1_{\rm loc}(\R^n)$.
\end{theorem}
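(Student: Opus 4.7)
The plan is to exploit the natural scale invariance of~$\mathcal F$ under the rescaling in \eqref{rescaled}, extract a limit by compactness, identify that limit as a minimizing pair via Theorem~\ref{prop:conv_ext}, and finally invoke the monotonicity formula to promote this limit to a minimizing cone.

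First I would check that the exponent $\frac{\sigma}{2}-s$ in the definition of $u_r$ is precisely the one that equalizes the scaling of the fractional Gagliardo seminorm and the $\sigma$-perimeter in $\mathcal F$. A direct change of variables then shows that $(u_r,E_r)$ is itself a minimizing pair in $B_{1/r}$ whenever $(u,E)$ minimizes in $B_1$. Next, since $u$ is continuous and $0\in\partial E$ one has $u(0)=0$, so the hypothesis $u\in C^{s-\frac{\sigma}{2}}(\R^n)$ together with $s>\frac{\sigma}{2}$ gives the uniform pointwise bound
\[
|u_r(x)|\le C\,|x|^{s-\frac{\sigma}{2}}\qquad\text{for every }r\in(0,1),\ x\in\R^n.
\]
Since $2\bigl(s-\frac{\sigma}{2}\bigr)-(n+2s)=-n-\sigma<0$, the tail integral $\int_{\R^n}|u_r(y)|^2/(1+|y|^{n+2s})\,dy$ stays bounded uniformly in $r$. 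Applying Theorem~\ref{ENERGY} (after dilating, i.e.\ to $(u_r,E_r)$ in $B_2$) yields uniform bounds, for every fixed $R>0$, on
\[
\iint_{\R^{2n}\setminus(B_R^c)^2}\frac{|u_r(x)-u_r(y)|^2}{|x-y|^{n+2s}}\,dx\,dy+\Per_\sigma(E_r,B_R).
\]

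For compactness I would combine the uniform Hölder bound on the $u_r$'s (Ascoli--Arzelà) with the uniform fractional perimeter bounds on the $E_r$'s (which give $L^1_{\rm loc}$ pre-compactness of the characteristic functions) to extract, via a diagonal argument over $R\to\infty$, a sequence $r_k\to 0$ and a pair $(u_0,E_0)$ with $u_{r_k}\to u_0$ in $L^\infty_{\rm loc}(\R^n)$ and $E_{r_k}\to E_0$ in $L^1_{\rm loc}(\R^n)$, with $u_0$ continuous and of the same Hölder class. Passing to the extensions of \eqref{ext}--\eqref{extE} (whose convergence on $\R^{n+1}_+$ follows from the convolution structure and dominated convergence, using the uniform growth of $u_{r_k}$), the hypotheses of Theorem~\ref{prop:conv_ext} are verified in $\mathcal B_2^+$ after rescaling. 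This identifies $(\overline{u_0},U_0)$ as a minimizing pair for the extended problem in $\mathcal B_{1/2}^+$; repeating the argument in $\mathcal B_{2R}^+$ for every $R>0$ and using the equivalence with the original functional (from Proposition~\ref{char}) shows that $(u_0,E_0)$ minimizes $\mathcal F$ in every ball, hence is a global minimizer.

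It remains to show that $(u_0,E_0)$ is a cone. Here I would appeal to the monotonicity formula (one of the main tools announced in the abstract): for any minimizing pair there is a quantity $\Phi(\rho)=\Phi(\rho;u,E,0)$ which is nondecreasing in $\rho$, and which is invariant under the rescaling \eqref{rescaled} in the sense that $\Phi(\rho;u_r,E_r,0)=\Phi(r\rho;u,E,0)$. Therefore
\[
\Phi(\rho;u_{r_k},E_{r_k},0)=\Phi(r_k\rho;u,E,0)\longrightarrow \Phi(0^+;u,E,0),
\]
which is a constant independent of $\rho$. The strong convergence ensured by \eqref{TESI} in Theorem~\ref{prop:conv_ext} lets me pass $\Phi$ to the limit pair, so $\rho\mapsto\Phi(\rho;u_0,E_0,0)$ is constant. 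The equality case in the monotonicity formula then forces $u_0$ to be homogeneous of degree $s-\frac{\sigma}{2}$ and $E_0$ to be a cone, exactly along the lines of the analogous argument in~\cite{CRS}.

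The main obstacle I foresee is the last step: ensuring that the convergence provided by Theorem~\ref{prop:conv_ext} is strong enough for the monotonicity quantity (which involves both the weighted Dirichlet energies and boundary-type integrals) to pass to the limit, and that the rigidity clause of the monotonicity formula applies to the limit pair. A secondary technical point is the careful verification of hypothesis \eqref{se00} for the blow-up extensions, which requires controlling the Poisson convolutions of $u_{r_k}$ and of $\chi_{E_{r_k}}-\chi_{E_{r_k}^c}$ uniformly up to $\{z=0\}$ using only the growth $|u_{r_k}(x)|\le C|x|^{s-\sigma/2}$ and the uniform $\sigma$-perimeter bounds.
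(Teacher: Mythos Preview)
Your proposal is correct and follows essentially the same route as the paper's own proof: uniform H\"older bounds via the $C^{s-\frac{\sigma}{2}}$ assumption and $u(0)=0$, uniform energy bounds from Theorem~\ref{ENERGY}, compactness for both $u_r$ and $E_r$, convergence of the extensions to apply Theorem~\ref{prop:conv_ext} (and Proposition~\ref{char}), and finally the scaling identity $\Phi_{u_r}(t)=\Phi_u(rt)$ combined with~\eqref{TESI} and the rigidity clause of Theorem~\ref{TH:mon}. Your anticipated obstacles are exactly the technical points the paper handles: the boundary term in $\Phi$ passes to the limit thanks to the locally uniform convergence of $\overline{u}_r$, and the verification of~\eqref{se00} is done via the Poisson representation and dominated convergence using the growth bound $|u_r(x)|\le C|x|^{s-\sigma/2}$, just as you suggest.
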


We remark that the rescaling in~\eqref{rescaled}
is the one induced by the energy, since 
if~$(u,E)$ is a minimizing pair
for~$\mathcal F$ in~$\Omega$, then~$(u_r,E_r)$ is a minimizing pair
for~$\mathcal F$ in~$\frac1r \Omega$.
Moreover, the exponent~$\frac{\sigma}{2}-s$ in~\eqref{rescaled}
corresponds to the one obtained in~\cite{CSV} in the case~$s=1$. 
\medskip

A complete
classification of the minimal cones in dimension $2$ holds true,
according to the following result:

\begin{theorem}[Classification of minimizing cones in the plane]\label{prop:reg}
Let~$n=2$
and let $(u,E)$ be a minimizing pair in any domain.

Assume that $u$ is continuous and homogeneous of degree $s-\frac\sigma2$
and that $E$ is the union of finitely many closed conical
sectors, with both $E$ and $E^c$ nonempty.

Then $E$ is a halfplane.
\end{theorem}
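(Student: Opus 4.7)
My plan is to combine the translation invariance of~$\mathcal F$ with a submodularity property of the functional under the lattice operations $(\min,\cap)$ and $(\max,\cup)$, and then to exploit the rigidity specific to planar cones.

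The first step is a submodularity property of~$\mathcal F$. For any two admissible pairs $(u_1,E_1)$ and $(u_2,E_2)$, the lattice pairs $(u_1\vee u_2, E_1\cup E_2)$ and $(u_1\wedge u_2, E_1\cap E_2)$ are again admissible (the sign relations between each function and its positivity set are preserved by $\vee\cup$ and $\wedge\cap$) and satisfy
$$\mathcal F(u_1\vee u_2, E_1\cup E_2)+\mathcal F(u_1\wedge u_2, E_1\cap E_2)\le \mathcal F(u_1,E_1)+\mathcal F(u_2,E_2).$$
The inequality for the $\Per_\sigma$ part is classical; for the $H^s$ part it follows, after integration against the Gagliardo kernel, from the pointwise estimate
$$|\max(a,c)-\max(b,d)|^2+|\min(a,c)-\min(b,d)|^2\le (a-b)^2+(c-d)^2,$$
which is easily verified by a short case analysis.

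Next, by translation invariance of~$\mathcal F$, for every $v\in\R^2$ the translated pair $(u_v,E_v):=(u(\cdot-v),E-v)$ is also a minimizing pair in every domain. Applying the submodularity above in a large ball, together with the glueing lemmata of the paper (used to produce admissible competitors with the correct exterior data), one concludes that both lattice pairs built from $(u,E)$ and $(u_v,E_v)$ are themselves minimizing pairs. Moreover equality must hold throughout the submodular inequality, which forces $u$ and $u_v$ to be a.e.\ ordered on the symmetric difference $E\mathbin{\triangle} E_v$.

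Finally I would use $n=2$ crucially. Suppose by contradiction that~$E$ is not a halfplane. Since $E$ is a finite union of closed conical sectors with $E,\, E^c$ both nonempty, $\partial E$ contains at least one ray~$L$ for which the opening of~$E$ or of~$E^c$ is strictly less than~$\pi$. Choosing~$v$ transverse to~$L$ and sufficiently small, I would blow up the minimizing pair $(u\wedge u_v, E\cap E_v)$ at a point $p\in L\setminus\{0\}$ using Theorem~\ref{prop:conv_ext} and compare the resulting translation-invariant (along~$L$) profile with the corresponding blow-up of $(u,E)$; the equality case of the submodular inequality at~$p$ is compatible with the planar cone structure only when the opening angle at~$L$ equals~$\pi$. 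Iterating over the finitely many boundary rays of~$E$ forces $\partial E$ to be a single straight line, so~$E$ is a halfplane. The most delicate part of the argument is this last rigidity step: converting minimality of the lattice pairs into a quantitative constraint on the angles of the sectors of~$E$. This requires a careful analysis of the blow-up profiles of~$u$ along $\partial E$ and a uniqueness/stability statement for one-dimensional profiles of the extended problem, where both the homogeneity degree~$s-\sigma/2$ and the low dimension play a decisive role.
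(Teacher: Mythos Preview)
Your proposal captures the right high-level mechanism --- translations, lattice operations, and submodularity --- and this is indeed the backbone of the paper's argument. However, there are two genuine gaps that prevent your sketch from going through as written.

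\textbf{The energy error from glueing is not controlled.} You write that ``together with the glueing lemmata of the paper'' one concludes that the lattice pairs are minimizers and that equality holds in the submodular inequality. But the translated pair $(u_v,E_v)$ has different exterior data from $(u,E)$, so neither is an admissible competitor for the other in a fixed ball. The glueing lemmata of Section~\ref{GLUE} produce competitors with correct exterior data, but the error terms they introduce (see \eqref{A2}, \eqref{AA2}) do not obviously vanish as the ball grows. The paper circumvents this by \emph{not} using a straight translation: instead it uses the domain variation $X\mapsto X+\varphi(|X|/R)\,e_1$, which equals a translation in $\mathcal B_{R/2}^+$ and the identity outside $\mathcal B_{3R/4}^+$. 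The key computation (Lemma~\ref{lemma:est}) is then a second-order expansion showing
\[
\mathcal E_R(\overline u_R^+,U_R^+)+\mathcal E_R(\overline u_R^-,U_R^-)-2\,\mathcal E_R(\overline u,U)\le C\,R^{\,n-2-\sigma},
\]
which vanishes precisely when $n=2$. This quantitative bound is what makes the dimension hypothesis enter; your glueing route does not produce it.

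\textbf{The rigidity step is not a proof.} You explicitly acknowledge that the final step --- converting equality in the submodular inequality into the conclusion that every boundary ray has opening angle $\pi$ --- is ``the most delicate part'' and requires ``a uniqueness/stability statement for one-dimensional profiles''; none of this is supplied. The paper avoids this entirely. Once the $O(R^{-\sigma})$ bound is in hand, it argues by contradiction: if $E$ is not a halfplane, one finds $p\in E$ with $p\pm e_1\in E^c$, forms $W_R=\max\{U,U_R^+\}$, and shows via the \emph{strong maximum principle} (applied to the harmonic extensions) that $(\overline w_R,W_R)$ is not a minimizer in a fixed ball $\mathcal B_{2M}^+$, yielding an energy drop $\delta>0$ independent of $R$. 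This fixed $\delta$ beats the $CR^{-\sigma}$ error for $R$ large, contradicting minimality. No blow-up analysis or classification of one-dimensional profiles is needed.

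In short: your submodularity ingredient is correct and is used in the paper, but the two missing pieces are (i) the second-order domain-variation estimate replacing your appeal to glueing, and (ii) the strong-maximum-principle argument replacing your undeveloped rigidity step.
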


The proof of Theorem \ref{prop:reg} uses a second order domain variation,
in the spirit of the technique introduced in \cite{SV1, SV2}
(since the main ideas of the proof are the same, but
some technical differences arise here due to the presence
of minimizing pairs rather than functions,
we give the full details of the proof in Appendix~\ref{temp}).
\medskip

We remark that the continuity of the minimizers
is an interesting open problem. In the one-phase case
(that is, when the datum has a sign to start with), density
estimates and continuity properties have been recently proved
in~\cite{SERENA}.

In general, the natural scaling of the problem
seems to be of degree~$s-\frac\sigma2$. Nevertheless,
even in the classical case, the regularity of
the minimizer can beat such exponent: for instance,
the minimizers in~\cite{salsa} (which correspond to
the case~$s=\sigma=1$) are better than~$C^{1/2}$
and are indeed Lipschitz (see
in particular Theorem~3.1 and 4.1 in~\cite{salsa}).
In this sense,
the natural scaling of the problem
does not exhaust the complexity of the minimizers.

We think it is an interesting problem to detect
the optimal regularity of the minimizers in our problem
and to decide whether or not the blow-up sequences
in different settings approach the trivial function.\medskip

In our framework, a special scaling feature occurs
when~$s=\sigma/2$: in this case the
Gagliardo
seminorm and the fractional perimeter
have exactly the same dimensional properties and one
may think that, under this circumstance, a minimizing pair
reduces to the characteristic function of a set,
consistently with the fact that the blow-up limits
are homogeneous of degree zero. But it turns out that this is not
the case, as next observation points out:

\begin{rem}\label{R}
Let~$s\in(0,1/2)$ and $\sigma=2s$.
Fix a set~$E_0\subseteq\R^n$ with locally finite $\sigma$-perimeter,
and let~$u_0:=\chi_{E_0}-\chi_{E_0^c}$.

Let~$(u,E)$ be a minimizing pair in~$B_1$ with respect to the datum~$(u_0,E_0)$
outside~$B_1$.

Then, it is not true that~$u=\chi_E-\chi_{E^c}$ (unless either~$E=\R^n$ or~$E=\varnothing$).
\end{rem}

We also observe that the problem we consider
may develop plateaus, i.e. fattening of the zero level set
of minimizers. For instance, we
point out that, in dimension~$1$ and for~$s=1/2$,
it is not possible that~$\{u=0\}$ is just (locally) a single point,
unless~$u$ is $(1/2)$-harmonic across the free boundary, as shown
by the following simple example:

\begin{rem}\label{Remo}
Let~$n=1$, $s=1/2$ and~$(u,E)$ be a minimizing pair in~$(-1,1)$,
with~$u\in C([-1,1])\cap H^{1/2}(\R)$.

Then either~$(-\Delta)^{1/2} u=0$ in~$(-1,1)$
or the set~$\{u=0\}\cap(-1,1)$ contains infinitely many points.
\end{rem}

We recall that the fattening of the zero level set
of the minimizers also occur in other free boundary problems,
see in particular Theorem~9.1 in~\cite{Allen}.
\medskip

In the subsequent section,
we present
some additional results that are auxiliary to the ones presented till now, but that
we believe may have independent interest.
A detailed plan about the organization of the paper will
then be presented at the end of Section~\ref{II}.

\section{Additional results}\label{II}

Here we collect some further results that complete 
the picture described in Section~\ref{8fdsv3ws23456yfd}
and that possess some independent interest.
First of all, we obtain a Weiss-type monotonicity formula for minimizing 
pairs~$(u,E)$ (see \cite{WEISS}
for the original monotonicity formula in the setting of classical free boundaries):

\begin{theorem}[Monotonicity formula]\label{TH:mon}
Let~$(u,E)$ be a minimizing pair in~$B_\rho$, and let $\overline{u}$ and $U$ 
be as in \eqref{ext} and \eqref{extE}. Then
\begin{equation}\begin{split}\label{Phi}
\Phi_u(r):=&\, r^{\sigma-n}\left(\int_{\mathcal B^+_r}z^{1-2s}
|\nabla\overline u|^2\,dX+ c_{n,s,\sigma}\int_{\mathcal B^+_r}z^{1-\sigma}|\nabla U|^2\,dX\right) \\
&\qquad -\left(s-\frac{\sigma}{2}\right) r^{\sigma-n-1}\int_{\partial\mathcal B^+_r}z^{1-2s}\, \overline u^2\,d\mathcal H^n
\end{split}\end{equation}
is increasing in~$r\in(0,\rho)$. 

Moreover, $\Phi_u$ is constant if and only if~$\overline u$ is homogeneous 
of degree~$s-\frac{\sigma}{2}$ and~$U$ is homogeneous of degree~$0$. 
\end{theorem}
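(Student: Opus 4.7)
The plan is to establish $\Phi_u'(r)\ge 0$ by comparing the minimal energy in $\mathcal{B}^+_r$ against that of a radial homogeneous competitor built from the trace on $\partial\mathcal{B}^+_r$, in the spirit of Weiss. Set $\alpha:=s-\sigma/2$ and
\begin{equation*}
E(r):=\int_{\mathcal{B}^+_r}z^{1-2s}|\nabla\overline u|^2\,dX+c_{n,s,\sigma}\int_{\mathcal{B}^+_r}z^{1-\sigma}|\nabla U|^2\,dX,\qquad D(r):=\int_{\partial\mathcal{B}^+_r}z^{1-2s}\overline u^2\,d\mathcal H^n,
\end{equation*}
so that $\Phi_u(r)=r^{\sigma-n}E(r)-\alpha\,r^{\sigma-n-1}D(r)$. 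The coarea formula yields $E'(r)=\int_{\partial\mathcal{B}^+_r}(z^{1-2s}|\nabla\overline u|^2+c_{n,s,\sigma}z^{1-\sigma}|\nabla U|^2)\,d\mathcal H^n$, and differentiating the polar expression $D(r)=r^{n+1-2s}\int_{\mathbb S^n_+}\theta_{n+1}^{1-2s}\overline u(r\theta)^2\,d\sigma(\theta)$ gives $D'(r)=\tfrac{n+1-2s}{r}D(r)+2\int_{\partial\mathcal{B}^+_r}z^{1-2s}\overline u\,\overline u_\nu\,d\mathcal H^n$, where $\overline u_\nu=X\!\cdot\!\nabla\overline u/|X|$ denotes the outer normal derivative.

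\textbf{The comparison.} As competitor in the extended problem in $\mathcal{B}^+_r$ I take the radial homogeneous extensions of the boundary trace, namely $\overline v(X):=(|X|/r)^{\alpha}\overline u(rX/|X|)$ and $V(X):=U(rX/|X|)$ for $|X|\le r$, matched with $(\overline u,U)$ outside. With $F$ the open cone generated by $E\cap\partial B_r$, the trace of $V$ on $\{z=0\}$ is $\chi_F-\chi_{F^c}$, and the nonnegative factor $(|X|/r)^\alpha$ guarantees that $\overline v|_{\{z=0\}}$ has the correct sign on $F$ and $F^c$, so that $(\overline v,V)$ is admissible. Writing $X=R\theta$ and separating radial from spherical derivatives, a direct computation using $n-2s+2\alpha=n-\sigma$ (and $n-\sigma+2\cdot 0=n-\sigma$ for the $U$-term) gives
\begin{equation*}
\int_{\mathcal{B}^+_r}z^{1-2s}|\nabla\overline v|^2\,dX=\frac{1}{n-\sigma}\left(\frac{\alpha^2}{r}D(r)+r\int_{\partial\mathcal{B}^+_r}z^{1-2s}|\nabla_T\overline u|^2\,d\mathcal H^n\right),
\end{equation*}
and analogously $\int_{\mathcal{B}^+_r}z^{1-\sigma}|\nabla V|^2\,dX=\tfrac{r}{n-\sigma}\int_{\partial\mathcal{B}^+_r}z^{1-\sigma}|\nabla_T U|^2\,d\mathcal H^n$. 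Minimality of $(\overline u,U)$ then furnishes
\begin{equation*}
\frac{n-\sigma}{r}E(r)\le \frac{\alpha^2}{r^2}D(r)+\int_{\partial\mathcal{B}^+_r}z^{1-2s}|\nabla_T\overline u|^2\,d\mathcal H^n+c_{n,s,\sigma}\int_{\partial\mathcal{B}^+_r}z^{1-\sigma}|\nabla_T U|^2\,d\mathcal H^n.
\end{equation*}

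\textbf{Putting it together.} Differentiating $\Phi_u$, inserting the previous bound into $E'(r)-\tfrac{n-\sigma}{r}E(r)$ after decomposing $|\nabla\overline u|^2=|\nabla_T\overline u|^2+\overline u_\nu^2$ on $\partial\mathcal{B}^+_r$ (and similarly for $U$), and using the formula for $D'(r)$, the tangential boundary integrals cancel and, via the identity $2s-\sigma=2\alpha$, the coefficient of $r^{\sigma-n-2}D(r)$ consolidates to $+\alpha^2$. The remaining boundary contributions regroup as a sum of squares:
\begin{equation*}
\Phi_u'(r)\ge r^{\sigma-n-2}\int_{\partial\mathcal{B}^+_r}z^{1-2s}\bigl(r\overline u_\nu-\alpha\overline u\bigr)^2\,d\mathcal H^n+c_{n,s,\sigma}\,r^{\sigma-n}\int_{\partial\mathcal{B}^+_r}z^{1-\sigma}U_\nu^2\,d\mathcal H^n\ge 0.
\end{equation*}

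\textbf{Rigidity and the main difficulty.} If $\Phi_u$ is constant, both squared quantities vanish on almost every sphere, so $r\overline u_\nu=\alpha\overline u$ and $U_\nu=0$ there, which are precisely Euler's identities for $\alpha$-homogeneity of $\overline u$ and $0$-homogeneity of $U$; the converse is immediate from the same identities, since then the competitor coincides with $(\overline u,U)$ and every inequality collapses to equality. The main obstacle is not a single deep estimate but two pieces of care: first, verifying that the homogeneous extension is admissible in the sense of the extended problem, in particular that the sign of $\overline v|_{\{z=0\}}$ is compatible with the cone $F$; and second, tracking the degenerate weights $z^{1-2s}$ and $z^{1-\sigma}$ through polar coordinates so that the two radial integrals produce the common factor $r^{n-\sigma}$. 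This alignment is precisely why the correct Weiss scaling is $\alpha=s-\sigma/2$.
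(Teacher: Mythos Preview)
Your proof is correct and follows essentially the same Weiss-type strategy as the paper: compare the minimizer against the homogeneous extension of its boundary trace, then reorganize the resulting differential inequality into a sum of squares. The only difference is one of implementation---the paper builds an $\varepsilon$-competitor (rescaling in $\mathcal B^+_{1-\varepsilon}$, homogeneous extension in the thin annulus) and reads off $G_u'(1)$ by letting $\varepsilon\to 0$, whereas you integrate the full homogeneous competitor directly in polar coordinates; both routes land on the same inequality $\Phi_u'(r)\ge r^{\sigma-n-2}\int_{\partial\mathcal B_r^+}z^{1-2s}(r\overline u_\nu-\alpha\overline u)^2+c_{n,s,\sigma}\,r^{\sigma-n}\int_{\partial\mathcal B_r^+}z^{1-\sigma}U_\nu^2$.
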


We also show that the minimizing pairs enjoy
a dimensional reduction property. Namely, if a minimizing pair
is trivial in a given direction, then it can be sliced to a minimizing pair
in one dimension less. Conversely, given a minimizing pair in~$\R^n$,
one obtains a minimizing pair in~$\R^{n+1}$ by adding the trivial action of
one dimension more. The formal statement of this property sounds as follows:

\begin{theorem}[Dimensional reduction]\label{DR}
The pair $(u,E)$ is minimizing in any domain of $\R^n$ if and only if the pair
$(u^\star, E^\star)$ is minimizing in any domain of $\R^{n+1}$,
where $u^\star(x,x_{n+1}):=u(x)$ and $E^\star:=E\times\R$.
\end{theorem}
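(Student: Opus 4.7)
The natural strategy is to exploit the local formulation of the functional provided by the extension described in Proposition~\ref{char}, which recasts the two nonlocal terms of $\mathcal F$ as a pair of weighted Dirichlet integrals on a half-space. A preliminary observation is that the Poisson-type extensions in~\eqref{ext} and~\eqref{extE} commute with the trivial lift in the new variable, so that $\overline{u^\star}(x,x_{n+1},z)=\overline u(x,z)$ and $U^\star(x,x_{n+1},z)=U(x,z)$; in particular $\partial_{x_{n+1}}\overline{u^\star}=\partial_{x_{n+1}}U^\star=0$. This translation-invariance is what drives the whole argument, which splits into two complementary implications.

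For the direction from $\R^n$ to $\R^{n+1}$ I would argue by slicing in $x_{n+1}$. Let $(\bar v^\star,V^\star)$ be a competitor for $(\overline{u^\star},U^\star)$ in a domain $\Omega\subset\R^{n+2}_+$. For almost every $x_{n+1}\in\R$, conditions i)--iii) of the extended notion of minimality pass to the slice $\Omega_{x_{n+1}}\subset\R^{n+1}_+$, so that $(\bar v^\star(\cdot,x_{n+1},\cdot),V^\star(\cdot,x_{n+1},\cdot))$ is an admissible competitor for $(\overline u,U)$ there. The hypothesised $n$-dimensional minimality then yields a slicewise inequality involving only the $(x,z)$-derivatives of the competitor; integrating in $x_{n+1}$, discarding the nonnegative $|\partial_{x_{n+1}}|^2$ contributions, and using the $x_{n+1}$-independence of $\overline{u^\star}$ and $U^\star$ on the reference side, we arrive at the minimality inequality for $(\overline{u^\star},U^\star)$ in $\Omega$. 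By Proposition~\ref{char} this implies that $(u^\star,E^\star)$ is minimizing.

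For the converse I would use a cylindrical extension with truncation. Given an $n$-dimensional competitor $(\bar v,V)$ for $(\overline u,U)$ in $\mathcal O\subset\R^{n+1}_+$, I would construct $(\bar v^\#,V^\#)$ in the cylinder $\mathcal O\times(-L,L)\subset\R^{n+2}_+$ equal to the trivial cylindrical lift of $(\bar v,V)$ on $\mathcal O\times(-L+1,L-1)$ and equal to $(\overline{u^\star},U^\star)$ near $\{|x_{n+1}|=L\}$, interpolating in the two collars by means of the gluing results established earlier in the paper. The inner slab contributes exactly $(2L-2)$ times the $n$-dimensional extended energy of $(\bar v,V)$, while the collars cost a quantity bounded uniformly in $L$. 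Applying the assumed $(n+1)$-dimensional minimality, dividing by $L$ and sending $L\to\infty$ isolates the $n$-dimensional minimality of $(\overline u,U)$, hence of $(u,E)$.

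The main technical obstacle lies in the truncation of the set-valued trace in the backward direction. Because the trace of $V^\#$ at $\{z=0\}$ must be of the form $\chi_{F^\#}-\chi_{F^{\#c}}$ for a single admissible set $F^\#$, no smooth interpolation in $x_{n+1}$ can be performed directly at the level of $V^\#$. One must instead use a sharp $x_{n+1}$-cut of the set $F^\#$ combined with a suitable bulk extension of $V^\#$, and one has to show that the energy cost of this sharp set-level transition remains $O(1)$ in $L$ --- exactly the kind of quantitative gluing estimate proved elsewhere in the paper.
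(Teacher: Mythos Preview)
Your proposal is correct and follows essentially the same route as the paper: both directions are handled via the extended characterization of Proposition~\ref{char}, the forward implication by slicing and dropping the $|\partial_{x_{n+1}}|^2$ terms, and the backward implication by building a cylindrical competitor on $\mathcal B_R^+\times[-(a+1),a+1]$, gluing in the two collars, and dividing by the length parameter. The specific gluing tool the paper invokes for the sharp set-level transition you flag is Lemma~\ref{10.2} (i.e.\ Lemma~10.2 of~\cite{CRS}) and its Corollary~\ref{Co Z}, which are tailored to interpolate in the extra variable $x_{n+1}$ with a jump at $\{z=0\}$ while keeping the collar energy bounded independently of~$a$; the radial gluing lemmata of Section~\ref{GLUE} are not the ones used here.
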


In the study
of the local free boundary problems
and minimal surfaces, homogeneous solutions and minimizing cones
are often explicit and they constitute the easiest
possible nontrivial example. In our case, the existence
of nontrivial minimizing cones is not obvious, since
the example of the halfspace trivializes, according
to the following result:

\begin{theorem}[Trivialization of halfspaces]\label{TRI}
Let $(u,E)$ be a minimizing cone, with
$u\in C(\R^n)$ and $[u]_{C^\gamma(\R^n)}<+\infty$, for some~$\gamma\in(0,1]$.

If $E$ is contained in a halfspace then $u\le0$.

Similarly, if $E^c$ is contained in a halfspace then $u\ge0$.

In particular,
if $E$ is a halfspace
then $u$ vanishes identically.\end{theorem}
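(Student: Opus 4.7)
The plan is to establish the first assertion (if $E\subseteq H$ for a halfspace $H$, then $u\le 0$). The second assertion follows by applying the first to the pair $(-u,E^c)$, which is again a minimizing cone because the Gagliardo seminorm is invariant under $u\mapsto -u$ and $\Per_\sigma(E,\Omega)=\Per_\sigma(E^c,\Omega)$ by the symmetry of the definition \eqref{sper}. When $E$ itself is a halfspace, both hypotheses apply simultaneously, giving $u\le 0$ and $u\ge 0$, hence $u\equiv 0$.

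To set up the contradiction, write $\alpha:=s-\frac{\sigma}{2}$. If $\alpha<0$ the $\alpha$-homogeneity of $u$ conflicts with continuity at $0$ unless $u\equiv 0$; if $\alpha>\gamma$, comparing $|u(tx_0)|=t^\alpha|u(x_0)|$ with the H\"older bound $[u]_{C^\gamma(\R^n)}|tx_0|^\gamma$ as $t\to+\infty$ forces $u(x_0)=0$ for every $x_0$. So we may assume $0\le\alpha\le\gamma$. Suppose now, for contradiction, that $u^+\not\equiv 0$, and write $H=\{x\cdot\nu\ge 0\}$. By continuity the open set $\{u>0\}$ is nonempty; by \eqref{Def 2} it is contained in $E\subseteq H$, and in particular $u\le 0$ on the full halfspace $H^c$. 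Since $(u,E)$ is a minimizing cone and the competitor introduced below transforms covariantly (all three quantities $[\cdot]_{H^s}^2$, $\Per_\sigma(\cdot,B_R)$, and the analogous admissible modifications scale by the common factor $R^{n-\sigma}$), it suffices to produce a strict energy decrease in the unit ball~$B_1$.

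We take
\begin{equation*}
v:=\min(u,0)\text{ in }B_1,\qquad v:=u\text{ in }B_1^c,\qquad F:=E\setminus B_1,
\end{equation*}
which is admissible since $v\le 0$ throughout $B_1$ and $F\cap B_1=\varnothing$. Decompose the energy difference as $\mathcal F(v,F)-\mathcal F(u,E)=\Delta_{\mathrm{loc}}+\Delta_{\mathrm{mix}}+\Delta_{\mathrm{per}}$, coming from the Gagliardo integral on $B_1\times B_1$, on $B_1\times B_1^c$, and the perimeter change, respectively. The pointwise algebraic inequality $(u(x)-u(y))(u^+(x)-u^+(y))\ge (u^+(x)-u^+(y))^2$, verified by case analysis on the signs of $u(x)$ and $u(y)$, yields $(u^-(x)-u^-(y))^2\le (u(x)-u(y))^2-(u^+(x)-u^+(y))^2$ and hence the strict bound $\Delta_{\mathrm{loc}}\le -[u^+]_{\dot H^s(B_1)}^2<0$.

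The principal obstacle is to show that $\Delta_{\mathrm{mix}}+\Delta_{\mathrm{per}}$ cannot undo this strict decrease. A direct computation produces
\begin{equation*}
\Delta_{\mathrm{mix}}=2\iint_{B_1\times B_1^c}\frac{u^+(x)\bigl(2u(y)-u^+(x)\bigr)}{|x-y|^{n+2s}}\,dx\,dy
\end{equation*}
which, after writing $u=u^+-|u^-|$, contains an a priori positive piece $4\iint u^+(x)u^+(y)/|x-y|^{n+2s}$ alongside the manifestly nonpositive pieces. Here the inclusion $E\subseteq H$ enters decisively: $u^+$ is supported in $E\subseteq H$, while the full halfspace $H^c$ lies inside $\{u\le 0\}$, so the interactions of $u^+(x)$ with the $H^c$-tail of $u$ produce the strongly negative term $-4\iint_{B_1\times B_1^c}u^+(x)|u^-(y)|/|x-y|^{n+2s}$, which dominates. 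Analogously, $\Delta_{\mathrm{per}}=\mathcal L(E\cap B_1,E\cap B_1^c)-\mathcal L(E\cap B_1,E^c)$ is controlled from above by using $E^c\supseteq H^c$ against $E\cap B_1^c\subseteq H\cap B_1^c$ (the reflection $y\mapsto y-2(y\cdot\nu)\nu$ across $\partial H$ gives $|x-Ry|\ge|x-y|$ for $x,y\in H$, hence a favorable comparison of interactions across $\partial H$). Adding everything up gives $\mathcal F(v,F)<\mathcal F(u,E)$ in $B_1$, contradicting the minimality of $(u,E)$ and forcing $u^+\equiv 0$.
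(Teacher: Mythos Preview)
Your energy-comparison strategy has a genuine gap in the control of $\Delta_{\mathrm{mix}}+\Delta_{\mathrm{per}}$. The reflection inequality $|x-Ry|\ge|x-y|$ for $x,y\in H$ yields
\[
\mathcal L\bigl(E\cap B_1,\;H^c\cap B_1^c\bigr)\;\le\;\mathcal L\bigl(E\cap B_1,\;H\cap B_1^c\bigr),
\]
which goes the \emph{wrong way}: it says the positive piece $\mathcal L(E\cap B_1,E\cap B_1^c)$ can be larger than the piece of $\mathcal L(E\cap B_1,E^c)$ coming from across $\partial H$, not smaller. Concretely, take $E=H$. Then halfplanes are $\sigma$-perimeter minimizers, so every competitor $F$ with $F\setminus B_1=H\setminus B_1$ satisfies $\Per_\sigma(F,B_1)\ge\Per_\sigma(H,B_1)$; in particular your $F=H\setminus B_1$ gives $\Delta_{\mathrm{per}}\ge 0$, and in fact strictly positive since $H\setminus B_1$ is not itself a minimizer. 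The assertion that in $\Delta_{\mathrm{mix}}$ the term $-4\iint u^+(x)\,u^-(y)/|x-y|^{n+2s}$ ``dominates'' $4\iint u^+(x)\,u^+(y)/|x-y|^{n+2s}$ is likewise unproved; nothing ties the relative sizes of these two integrals to the strict gain $-[u^+]_{\dot H^s(B_1)}^2$ from $\Delta_{\mathrm{loc}}$. Since the perimeter contribution is decoupled from $u$, there is no mechanism in your argument preventing $\Delta_{\mathrm{per}}$ from overwhelming the local gain.

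The paper bypasses this difficulty by exploiting the PDE satisfied by $u$ rather than a direct energy competitor. By Lemma~\ref{SH} one has $(-\Delta)^s u=0$ on the open set $\{u>0\}\subseteq E\subseteq\{x_n>0\}$; the nonlocal maximum principle of Theorem~\ref{MAX PLE} (proved via a Harnack estimate exploiting that a fixed fraction of each ball lies in $\{u\le0\}$) is then upgraded in Corollary~\ref{T cone} by comparison with the explicit barrier $C_A(x_n)_+^s$, which is $s$-harmonic in $\{x_n>0\}$. This yields $u(x)\le C_A(x_n)_+^s$ on $\{x_n\le A\}$, and since $u$ is homogeneous of degree $s-\tfrac\sigma2<s$, rescaling forces $u^+\equiv 0$. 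The essential input your approach is missing is precisely this comparison of homogeneities, which has no analogue at the level of the raw energy competitor $(v,F)$.
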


We observe that, as a consequence of
Theorems~\ref{CONI 0} and~\ref{TRI},
the blow-up of minimizers at regular points
of the free boundary produces the zero function.
\medskip

The proof of Theorem \ref{TRI} relies on a suitable nonlocal
maximum principle
in unbounded domains that we explicitly state as follows:

\begin{theorem}[Nonlocal
maximum principle in a halfspace]\label{MAX PLE}
Let $D$ be an open set of $\R^n$, contained in the halfspace~$\{ x_n>0\}$.
Let $v\in L^\infty(D)\cap C^2(D)$ be
continuous on $\overline{D}$ and such that
\begin{equation}\label{eq-0} \left\{\begin{matrix}
(-\Delta)^s v \le 0 & {\mbox{ in }} D, \\
v\le 0 & {\mbox{ in }} D^c.
\end{matrix}\right.\end{equation}
Then $v\le 0$ in $D$.
\end{theorem}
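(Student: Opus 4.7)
The plan is to argue by contradiction, combining a comparison with the $s$-harmonic barrier $h(x):=(x_n)_+^s$ with a horizontal translation-and-compactness argument. The function $h$ is nonnegative on $\R^n$, vanishes on $\{x_n\le 0\}$, is $s$-harmonic in $\{x_n>0\}\supseteq D$, and satisfies $h(x)\to+\infty$ as $x_n\to+\infty$; these properties make it a natural barrier adapted to the halfspace geometry.

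For each $\epsilon>0$ set $w_\epsilon:=v-\epsilon h$. Then $(-\Delta)^s w_\epsilon\le 0$ in $D$, while $w_\epsilon\le 0$ in $D^c$ (since $v\le 0$ and $h\ge 0$ there). Because $v$ is bounded while $h(x)$ blows up with $x_n$, the function $w_\epsilon$ is bounded above on $\R^n$ with $w_\epsilon(x)\to -\infty$ as $x_n\to+\infty$. It suffices to show $\sup_{\R^n} w_\epsilon\le 0$ for every $\epsilon>0$; letting $\epsilon\to 0$ then yields $v\le 0$ in $D$. Assume towards a contradiction that $M_\epsilon:=\sup_{\R^n}w_\epsilon>0$ and pick a maximizing sequence $x_k\in D$ with $w_\epsilon(x_k)\to M_\epsilon$. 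Because $w_\epsilon(x_k)$ is bounded below by a positive constant, the vertical components $x_{k,n}$ stay bounded.

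Writing $x_k=(x_k',x_{k,n})$, if $\{x_k'\}$ is bounded a subsequence converges to some $x_\infty\in\overline D$ with $w_\epsilon(x_\infty)=M_\epsilon$. If instead $|x_k'|\to+\infty$, I translate horizontally, setting $\tilde v_k(y):=v(y+(x_k',0))$ on the translated domain $\tilde D_k:=D-(x_k',0)\subset\{x_n>0\}$. Each $\tilde v_k$ satisfies the hypotheses of the theorem on $\tilde D_k$ with the same $L^\infty$ bound. Using interior H\"older regularity for bounded supersolutions of $(-\Delta)^s$, I extract a subsequential locally uniform limit $v_\infty$ on $\{x_n>0\}$, together with a limit open set $D_\infty\subset\{x_n>0\}$ for which $v_\infty\le 0$ in $D_\infty^c$ and $(-\Delta)^s v_\infty\le 0$ in $D_\infty$; along a further subsequence $x_{k,n}\to \bar x_n$ and $w_{\epsilon,\infty}:=v_\infty-\epsilon h$ attains $M_\epsilon$ at $(0,\bar x_n)$. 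In both scenarios the positive maximum $M_\epsilon$ is attained at some point $x_\star\in\overline{D_\bullet}$ of a configuration satisfying the same hypotheses.

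If $x_\star\in\partial D_\bullet\subset D_\bullet^c$, then $w_\epsilon(x_\star)\le 0<M_\epsilon$, a contradiction. If instead $x_\star\in D_\bullet$, the pointwise representation
$$ 0\ge (-\Delta)^s w_\epsilon(x_\star)=c_{n,s}\,\PV\!\int_{\R^n}\frac{M_\epsilon-w_\epsilon(y)}{|x_\star-y|^{n+2s}}\,dy $$
is contradicted by the integrand being nonnegative and strictly positive on the region where $y_n$ is large (since $w_\epsilon\to-\infty$ there), forcing the integral to be strictly positive. Hence $M_\epsilon\le 0$, as desired. The main obstacle I anticipate is the translation-compactness step: producing a subsequential limit of the $\tilde v_k$ that preserves both the pointwise inequality $(-\Delta)^s v_\infty\le 0$ in $D_\infty$ and the external sign $v_\infty\le 0$ on $D_\infty^c$ requires a uniform interior H\"older estimate (of Silvestre type) for bounded supersolutions, together with careful control of the moving domains $\tilde D_k$ and an argument to exclude that the limit maximizer lands on $\{x_n=0\}$ (where the barrier $h$ vanishes).
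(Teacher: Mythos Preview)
Your barrier-plus-compactness strategy is different from the paper's, and the translation-compactness step you flag as ``the main obstacle'' is a genuine gap, not just a technicality. The hypotheses give no uniform modulus of continuity for $v$ on $D$, no regularity of $\partial D$, and no control on how the translated domains $\tilde D_k=D-(x_k',0)$ behave, so there is nothing preventing the maximizing sequence from drifting toward $\partial D$ (where Silvestre's interior H\"older estimate degenerates) and nothing guaranteeing that a ``limit open set $D_\infty$'' exists in any sense that preserves both the PDE inequality and the exterior sign condition. You also have no mechanism to keep the limit point off $\{x_n=0\}$. As written, the unbounded-$x'$ case is not proved.

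The paper avoids all of this by never trying to achieve the supremum. It first shows that $v^+$ is a viscosity subsolution of $(-\Delta)^s$ on the whole of $\R^n$ (touching from above either reduces to the equation in $D$, or to a trivial sign check where $v^+=0$); this single step removes the troublesome domain $D$ from the picture. Then, for any $q$ with $v^+(q)>0$, the ball $B_{2q_n}(q)$ meets $\{x_n\le 0\}$---where $v^+=0$---in a set of measure $\ge \delta (2q_n)^n$, and a weak-Harnack/diminish-of-oscillation estimate (Corollary~4.5 in \cite{SIHA}) yields $v^+(q)\le(1-\gamma)A$ with $A:=\sup v^+$. Taking the supremum over $q$ gives $A\le(1-\gamma)A$, a contradiction. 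If you want to salvage your approach, the cleanest fix is to borrow this first step: once $(v-\epsilon h)^+$ (or simply $v^+$) is known to be a global subsolution, Silvestre's estimate applies uniformly and your translation argument---now posed on all of $\R^n$ rather than on the moving $\tilde D_k$---goes through.
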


The results presented in this paper
require delicate and conceptual modifications
with respect to other results already present in the literature.
The difficulties often come from the strong nonlocal
features of the problem and on the lack of explicit barriers
and test functions. Also, the glueing methods
in this case present additional complications,
since matching the functions on the trace and on the extension
may cause errors which propagate in the whole of the space.\medskip

The rest of the paper will present all the material necessary to the proofs
of the results presented here above and in Section~\ref{8fdsv3ws23456yfd}.
More precisely, in Section~\ref{prelim}
we show some preliminary properties of the
minimizing pairs.

In Section~\ref{sec4} we deal with an equivalent minimization problem 
on the extended variables and we use it to prove Theorem~\ref{TH:mon}. 
The proof of the dimensional reduction of
Theorem~\ref{DR}
is contained in Section~\ref{DRA}.

Section~\ref{GLUE} contains some glueing results that are
interesting in themselves and that are used to prove
the uniform energy estimates of Theorem~\ref{ENERGY}, which are
contained in Section~\ref{ENERGY:sec}, and the convergence result
of Theorem~\ref{prop:conv_ext}, which is contained in Section~\ref{prop:conv_ext:sec}.
The convergence to blow-up cones, as detailed in
Theorem~\ref{CONI 0}, is proved in Section~\ref{CONI 0:sec}.
Then, in Section~\ref{sec6} we prove Theorems~\ref{TRI} and~\ref{MAX PLE}.
Finally, the proofs of Remarks~\ref{R} and~\ref{Remo}
are contained in Sections~\ref{RR} and~\ref{ydd88syhh}, respectively.

\section{Preliminaries}\label{prelim}

Here we discuss some basic properties of the minimizing pairs,
such as existence and $s$-harmonicity.

\begin{lemma}\label{MIMI}
The minimizing pair exists.
\end{lemma}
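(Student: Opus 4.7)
The plan is to apply the direct method of the calculus of variations, which requires finding a minimizing sequence, extracting a compact subsequence, and verifying lower semicontinuity together with admissibility of the limit. First, I would observe that the pair $(\varphi, E_0)$ is admissible by construction, so in particular $\inf\mathcal{F}\le \mathcal{F}(\varphi,E_0)<+\infty$ (using that $\Omega$ is bounded, $\varphi\in H^s_{\rm loc}(\R^n)$, and $E_0$ has locally finite $\sigma$-perimeter). This gives a minimizing sequence $(u_m,E_m)$ whose Gagliardo seminorm on $\R^{2n}\setminus(\Omega^c)^2$ and $\sigma$-perimeter in $\Omega$ are uniformly bounded.

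Next, I would extract compactness for each component separately. For $u_m$: since $u_m=\varphi$ a.e.\ on $\Omega^c$ and $[u_m]_{H^s(\R^{2n}\setminus(\Omega^c)^2)}$ is bounded, the function $u_m-\varphi$ is uniformly bounded in $H^s(\R^n)$. The compact fractional Sobolev embedding on any bounded set containing $\Omega$ then yields, up to a subsequence, $u_m\to u$ in $L^2_{\rm loc}(\R^n)$ and a.e., with $u-\varphi\in H^s(\R^n)$ and $u=\varphi$ a.e.\ in $\Omega^c$. For $E_m$: the uniform bound on $\Per_\sigma(E_m,\Omega)$ together with the fixed exterior condition $E_m\setminus\Omega=E_0\setminus\Omega$ yields, by the standard compactness result for sets of bounded fractional perimeter (see \cite{CRS}), a measurable set $E$ with $E\setminus\Omega=E_0\setminus\Omega$ such that $\chi_{E_m}\to\chi_E$ in $L^1_{\rm loc}(\R^n)$ and a.e., after passing to a further subsequence.

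The admissibility of the sign condition is then a pointwise argument: for a.e.\ $x\in E\cap\Omega$, one has $\chi_{E_m}(x)\to 1$, so eventually $x\in E_m$, hence $u_m(x)\ge 0$, and passing to the a.e.\ limit gives $u(x)\ge 0$; symmetrically one obtains $u\le 0$ a.e.\ in $E^c\cap\Omega$. Thus $(u,E)$ is admissible.

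Finally, lower semicontinuity closes the argument: the Gagliardo double integral is lower semicontinuous under a.e.\ convergence by Fatou's lemma applied to $|u_m(x)-u_m(y)|^2/|x-y|^{n+2s}$, and each term defining $\Per_\sigma(E,\Omega)$ in \eqref{sper}--\eqref{idid} is similarly lower semicontinuous under a.e.\ convergence of $\chi_{E_m}$, again by Fatou. Therefore $\mathcal{F}(u,E)\le \liminf_m \mathcal{F}(u_m,E_m)=\inf\mathcal{F}$, and $(u,E)$ is a minimizing pair. The only mildly subtle point is the compactness in $L^1_{\rm loc}$ for the indicator functions $\chi_{E_m}$ under a bound on $\Per_\sigma$ with prescribed exterior data, which is where I would be most careful, but this is by now a standard ingredient in the nonlocal minimal surface literature.
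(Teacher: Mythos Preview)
Your proposal is correct and follows essentially the same route as the paper's proof: a minimizing sequence, compactness via the fractional Sobolev embedding (the paper cites Theorem~7.1 in \cite{DPV12} for both $u_j$ and $\chi_{E_j}$ simultaneously, rather than invoking \cite{CRS} separately for the sets), Fatou for lower semicontinuity, and exactly the same pointwise argument for the sign condition on the limit. Your version is slightly more detailed (e.g., you explicitly verify finiteness of the infimum via the datum $(\varphi,E_0)$), but there is no substantive difference.
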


\begin{proof}
Let $(u_j,E_j)$ be a minimizing sequence.
By compactness (see e.g. Theorem 7.1 in \cite{DPV12}) 
we infer that, up to subsequences, $u_j$ converges
to some $u$ and $\chi_{E_j}$ converges to some $\chi_E$
in $L^2(\Omega)$ and a.e. in $\Omega$. In fact, since
$u_j$ and $\chi_{E_j}$ are fixed outside $\Omega$,
the convergence holds a.e. in $\R^n$ and so, by Fatou Lemma,
${\mathcal{F}}(u,E)$ attains the desired minimum of the energy.
It remains to show that this pair is admissible, i.e.~$u\ge0$ 
a.e. in~$E\cap\Omega$ and~$u\le0$ a.e. in~$E^c\cap\Omega$. 
Indeed, let~$x\in E\cap\Omega$. Up to a set of
null measure we have that~$\chi_{E_j}(x)\to\chi_{E}(x)=1$.
Since the image of the characteristic function is a discrete set,
it follows that~$\chi_{E_j}(x)=1$ for large~$j$, hence~$u_j(x)\ge0$
and therefore~$u(x)\ge0$.
Similarly, one can prove that~$u\le0$ a.e. in~$E^c\cap \Omega$.\end{proof}

\begin{lemma}\label{SH}
Let $(u,E)$ be a minimizing pair. If $\Omega$ is an open subset of either $\{u>0\}$ or $\{u<0\}$, 
then $(-\Delta)^s u(x)=0$ for any $x\in\Omega$. 
In particular, if $u\in C(\R^n)$, 
then $(-\Delta)^s u(x)=0$ for any $x\in \{u>0\} \cup\{ u<0\}$.
\end{lemma}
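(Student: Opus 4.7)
The plan is to exploit the fact that, on an open subset~$\Omega$ of~$\{u>0\}$, any sufficiently small compactly supported perturbation of~$u$ stays nonnegative, so the positivity set~$E$ need not be changed and the nonlocal perimeter contribution in~$\mathcal{F}$ disappears from the competition. Minimality then forces~$u$ to minimize only the Gagliardo seminorm against compactly supported perturbations in~$\Omega$, which is precisely the variational formulation of~$s$-harmonicity.

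More concretely, I would fix~$\varphi\in C^\infty_c(\Omega)$ and set~$K:=\mathrm{supp}\,\varphi\Subset\Omega$. Since~$\Omega\subset\{u>0\}$ is open and~$K$ is compact, one can find a small~$t_0>0$ (depending on a lower bound of~$u$ on~$K$ and on~$\|\varphi\|_{L^\infty}$) such that~$u+t\varphi\ge 0$ on~$K$, and hence on all of~$E\cap\Omega$ (and trivially on the complement of~$K$), for every~$|t|<t_0$. Taking~$F:=E$, the pair~$(u+t\varphi,E)$ is therefore admissible in the sense of~\eqref{Def 1}--\eqref{Def 2}, and its fractional perimeter contribution equals~$\mathrm{Per}_\sigma(E,\Omega)$ for all such~$t$.

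Consequently, denoting the Gagliardo part of~$\mathcal F$ by~$\mathcal G$, the minimality of~$(u,E)$ gives
\begin{equation*}
\mathcal G(u)\le \mathcal G(u+t\varphi)\qquad\text{for all }|t|<t_0.
\end{equation*}
Expanding~$\mathcal G(u+t\varphi)$ in~$t$ and taking~$\frac{d}{dt}\big|_{t=0}$ yields the Euler--Lagrange identity
\begin{equation*}
\iint_{\R^{2n}\setminus(\Omega_0^c)^2}\frac{(u(x)-u(y))(\varphi(x)-\varphi(y))}{|x-y|^{n+2s}}\,dx\,dy=0
\end{equation*}
for every~$\varphi\in C^\infty_c(\Omega)$, where~$\Omega_0$ is the underlying minimizing domain (which we may harmlessly take to contain~$\Omega$, since minimality on a larger domain restricts to minimality on subdomains). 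Since~$\mathrm{supp}\,\varphi\subset\Omega$, the integration reduces to the standard bilinear form of~$(-\Delta)^s$ tested against~$\varphi$, and this is exactly the weak definition of~$(-\Delta)^s u=0$ in~$\Omega$. The case~$\Omega\subset\{u<0\}$ is symmetric, simply testing with~$-\varphi$.

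The only technical point worth care is checking that the perturbation indeed preserves the admissibility condition~\eqref{Def 2}: this is where compactness of~$K$ and the strict positivity~$u>0$ on it provide the uniform lower bound that lets one choose~$t_0$. The final sentence of the lemma is immediate, because when~$u\in C(\R^n)$ the sets~$\{u>0\}$ and~$\{u<0\}$ are open, so each point of~$\{u>0\}\cup\{u<0\}$ lies in an open subset to which the first assertion applies.
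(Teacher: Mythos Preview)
Your proposal is correct and follows essentially the same approach as the paper: both arguments perturb~$u$ by a small compactly supported smooth function inside the region where~$u$ has a fixed sign, observe that the positivity set~$E$ is unchanged so the perimeter term drops out, and then read off weak (hence classical) $s$-harmonicity from the first variation of the Gagliardo part. The paper localizes to a small ball~$B_r(x_o)\Subset\Omega$ and writes~$\mu:=\min_{B_r(x_o)}u>0$ explicitly, while you work with a generic test function on~$\Omega$, but this is only a cosmetic difference.
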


\begin{proof} Fix $x_o\in\Omega\subset\{u>0\}$ (the case
$\Omega\subset\{u<0\}$ is similar). Then there exists $r>0$
such that $B_r(x_o)\Subset\Omega$ and therefore
$$ \mu:= \min_{B_r(x_o)} u >0.$$
Let $\eta\in C^\infty_0 (B_r(x_o))$ and $\eps\in\R$ with
$|\eps|<\mu \|\eta\|_{L^\infty(\R^n)}^{-1}$.
We define $u_\eps:=u+\eps\eta$. Notice that $u_\eps=u$ outside $B_r(x_o)$
and $u_\eps\ge \mu-|\eps|\,\|\eta\|_{L^\infty(\R^n)}>0$ in $B_r(x_o)$.

Therefore $u_\eps\ge0$ in $E$ and $u_\eps\le0$ in $E^c$, since the same
holds for $u$. This says that $(u_\eps,E)$ is an admissible competitor,
therefore
$$ 0\le {\mathcal{F}}(u_\eps,E)-{\mathcal{F}}(u,E)
=2\eps \iint_{\R^{2n}}\frac{
\big(u(x)-u(y)\big)\big(\eta(x)-\eta(y)\big)}{
|x-y|^{n+2s}}\, dx\,dy+o(\eps).$$
Dividing by $\eps$ and taking the limit we conclude that~$(-\Delta)^s u(x_o)=0$ in the weak sense,
and thus in the classical sense (see e.g. \cite{SV-w}).
\end{proof}

We prove also the following comparison principle. 

\begin{lemma}\label{CP00}
Let $(u,E)$ be a minimizing pair and let~$A\in\R$. 
If~$\varphi\ge A$ (respectively~$\varphi\le A$), 
then~$u\ge A$ (respectively~$u\le A$).  
\end{lemma}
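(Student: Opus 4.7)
The plan is a truncation argument. By the symmetry $(u,E) \leftrightarrow (-u, E^c)$, which preserves $\mathcal{F}$ and the admissibility class while sending $\varphi \geq A$ to $-\varphi \leq -A$, it suffices to prove the first implication: $\varphi \geq A$ implies $u \geq A$. Assume $\varphi \geq A$ and, towards a contradiction, that $\{u < A\} \cap \Omega$ has positive Lebesgue measure. I would set $\tilde u := \max(u, A)$, so that $\tilde u = \varphi$ outside $\Omega$ and the pointwise contraction $|\tilde u(x) - \tilde u(y)| \leq |u(x) - u(y)|$ immediately gives that the Gagliardo seminorm of $\tilde u$ does not exceed that of $u$.

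The strictness needed for the contradiction comes from pairs $(x,y) \in (\{u<A\} \cap \Omega) \times \Omega^c$: here $\tilde u(x) = A > u(x)$ while $\tilde u(y) = \varphi(y) \geq A$, so
\[
|u(x)-u(y)|^2 - |\tilde u(x)-\tilde u(y)|^2 = \bigl(2\varphi(y) - u(x) - A\bigr)(A-u(x)) > 0.
\]
Since $\Omega$ is bounded and $\{u<A\}\cap\Omega$ has positive measure, this product set has positive measure and lies inside $\R^{2n}\setminus(\Omega^c)^2$, producing a strictly positive gap in the Gagliardo part of the energy.

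To turn $\tilde u$ into an admissible competitor one must also choose an accompanying set $\tilde E$. When $A \leq 0$, I would take $\tilde E := E$: on $E \cap \Omega$ one has $\tilde u = \max(u,A) \geq 0$ since $u\geq 0$ and $A\leq 0$, while on $E^c \cap \Omega$ one has $\tilde u \leq \max(0,0)=0$; the perimeter is unchanged. When $A > 0$, the compatibility of $\varphi$ with $E_0$ (namely $\varphi \leq 0$ on $E_0^c$) together with $\varphi \geq A > 0$ forces $E_0^c \cap \Omega^c$ to be null, so $E \setminus \Omega = \Omega^c$ modulo null sets; I would then take $\tilde E := \R^n$, for which $\tilde E \setminus \Omega = E \setminus \Omega$, the sign condition on $\tilde E^c \cap \Omega = \emptyset$ is vacuous, and $\Per_\sigma(\tilde E, \Omega) = 0 \leq \Per_\sigma(E, \Omega)$.

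In either case $\mathcal{F}(\tilde u, \tilde E) < \mathcal{F}(u, E)$, contradicting minimality, so $\{u<A\}\cap\Omega$ is null and the claim follows. The main technical hurdle is the case $A > 0$: the naive truncation breaks admissibility because $\max(u,A) > 0$ on $E^c \cap \Omega$, and one must use the rigidity coming from the sign constraints on the exterior datum $(\varphi, E_0)$ to justify replacing $E$ by $\R^n$ without violating the exterior condition $\tilde E \setminus \Omega = E \setminus \Omega$.
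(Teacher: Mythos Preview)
Your proof is correct and in fact takes a cleaner, more unified route than the paper's own argument.

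For the cases $A\le 0$ both you and the paper use the truncation $\tilde u=\max(u,A)$ and the pointwise contraction $|\tilde u(x)-\tilde u(y)|\le |u(x)-u(y)|$. The difference is in how equality is extracted: the paper shows the energy difference is simultaneously $\ge 0$ (minimality) and $\le 0$ (contraction), hence $=0$, and then runs a Fubini-type slicing argument to upgrade this to $\tilde u=u$ a.e. You instead argue directly by contradiction, exhibiting an explicit region $(\{u<A\}\cap\Omega)\times\Omega^c$ of positive measure on which the integrand strictly drops; this bypasses the slicing step entirely.

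For $A>0$ the approaches genuinely diverge. The paper abandons the truncation and instead compares with the unconstrained $s$-harmonic replacement $u^\star$, invoking an external reference (Lemma~2.4 of \cite{DipVal}) to get $u^\star\ge A$, then arguing $\Per_\sigma(E,\Omega)=0$ and $u=u^\star$. Your argument stays with $\tilde u=\max(u,A)$ and observes that the sign constraint $\varphi\le 0$ a.e.\ on $E_0^c$ together with $\varphi\ge A>0$ forces $E_0^c\cap\Omega^c$ to be null, so $\tilde E=\R^n$ is an admissible competitor set with zero $\sigma$-perimeter. This is more elementary and self-contained. The paper's route does yield the extra information that $(u,E)=(u^\star,\R^n)$, but that is not needed for the lemma as stated.
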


\begin{proof}
We prove the case~$\varphi\ge A$, the case~$\varphi\le A$ is analogous.

Notice that if~$(v,E)$ is an admissible competitor against~$(u,E)$, then 
we have 
\begin{equation}\label{9COMP1}
0 \le \mathcal F(v,E)-\mathcal F(u,E) = 
\iint_{\R^{2n}}\frac{|v(x)-v(y)|^2-|u(x)-u(y)|^2}{|x-y|^{n+2s}}\,dx\,dy.\end{equation}
Suppose first that~$A=0$. 
We denote by~$\tilde u:=\max\{u,0\}$ and we notice that~$\tilde u=u\ge0$ in~$E$ 
and~$\tilde u=0$ in~$E^c$. Therefore~$(\tilde u,E)$ is an admissible competitor, 
and so~\eqref{9COMP1} holds with~$v:=\tilde u$, that is  
\begin{equation}\label{COMP10}
0 \le \iint_{\R^{2n}}\frac{|\tilde u(x)-\tilde u(y)|^2-|u(x)-u(y)|^2}{|x-y|^{n+2s}}\,dx\,dy.
\end{equation}
On the other hand, we have that~$|\tilde u(x)-\tilde u(y)|^2\le|u(x)-u(y)|^2$.
This, together with~\eqref{COMP10}, implies that
$$ \iint_{\R^{2n}}\frac{|\tilde u(x)-\tilde u(y)|^2-|u(x)-u(y)|^2}{|x-y|^{n+2s}}\,dx\,dy=0,$$
which gives that 
\begin{equation}\begin{split}\label{mathZ}
&{\mbox{there exists a set~$\mathcal Z\subset\R^{2n}$ of measure zero}}\\
&{\mbox{such that~$|\tilde u(x)-\tilde u(y)|^2=|u(x)-u(y)|^2$ for 
every~$(x,y)\in\R^{2n}\setminus\mathcal Z$.}} 
\end{split}\end{equation}

Now, we claim that 
\begin{equation}\label{mathV}
\begin{split}
&{\mbox{there exist $\bar y\in\R^n$ and ${\mathcal{V}}\subset\R^n$
such that}}\\
&{\mbox{$|{\mathcal{V}}|=0$, and}}\\ &{\mbox{$(x,\bar y)\in\R^{2n}\setminus{\mathcal{Z}}$
for any $x\in \R^n\setminus {\mathcal{V}}$.}}\end{split}
\end{equation}
Indeed, for any~$y\in\R^n$, we define
$$ b(y):=\int_{\R^n} \chi_{\mathcal{Z}}(x,y)\,dx.$$
Then, by Fubini's theorem,~$b$ is a nonnegative and measurable function, and
$$ \int_{\R^n} b(y)\,dy=\iint_{\R^{2n}} \chi_{\mathcal{Z}}(x,y)\,dx\,dy=|{\mathcal{Z}}|=0.$$
Therefore,~$b(y)=0$ for a.e.~$y\in\R^n$.
In particular, we can fix~$\bar y\in\R^n$ such that~$b(\bar y)=0$, that is
$$ \int_{\R^n} \chi_{\mathcal{Z}}(x,\bar y)\,dx=0.$$
This implies that~$\chi_{\mathcal{Z}}(x,\bar y)=0$ for a.e. $x\in\R^n$
(say, for every $x\in\R^n\setminus{\mathcal{V}}$,
for a suitable~${\mathcal{V}}\subset\R^n$ of zero measure). This concludes
the proof of~\eqref{mathV}.

Having established~\eqref{mathV}, we use it together with~\eqref{mathZ} 
to deduce that~$|\tilde u(x)-\tilde u(\bar y)|^2=|u(x)-u(\bar y)|^2$ for 
every~$x\in\R^n\setminus\mathcal V$, which means 
that~$\tilde u(x)-\tilde u(\bar y)=\pm(u(x)-u(\bar y))$ for a.e.~$x\in\R^n$.  
Setting~$c_\pm:=\tilde{u}(\bar y) \mp u(\bar y)$, 
we obtain that~$\tilde{u}(x)=\pm u(x)+c_\pm$ for a.e.~$x\in\R^n$. 
Since~$\tilde{u}=u=\varphi$ outside~$\Omega$, 
we get that $u=\tilde u$ a.e. in~$\R^n$, which implies that~$u\ge0$. 
This concludes the proof in the case~$A=0$.

Now suppose that~$A<0$. In this case we define~$\widehat u:=\max\{u,A\}$. 
It is not difficult to see that 
\begin{equation}\label{9COMP2}
|\widehat u(x)-\widehat u(y)|^2\le|u(x)-u(y)|^2. 
\end{equation}
Moreover~$\widehat u=u\ge0$ in~$E$ and~$\widehat u\le0$ in~$E^c$, 
which says that the couple~$(\widehat u,E)$ is an admissible competitor 
against~$(u,E)$. 
Therefore, from~\eqref{9COMP1} with~$v:=\widehat u$ 
and~\eqref{9COMP2} we obtain that 
$$ \iint_{\R^{2n}}\frac{|\widehat u(x)-\widehat u(y)|^2-|u(x)-u(y)|^2}{|x-y|^{n+2s}}\,dx\,dy=0.$$
Now, we proceed as in the case~$A=0$ and we deduce 
that~$u=\widehat u$ a.e. in~$\R^n$, which implies that~$u\ge A$ 
and concludes the proof in the case~$A<0$. 

Finally, we deal with the case~$A>0$. 
For this, given a function~$v:\R^n\rightarrow\R$ we use the notation
\begin{equation*}
\no(v):=\sqrt{
\iint_{\R^{2n}\setminus (\Omega^c)^2} \frac{|v(x)-v(y)|^2}{
|x-y|^{n+2s}}\,dx\,dy}.
\end{equation*}
We denote by~$u^\star$ the unique minimizer of the Dirichlet energy 
with datum~$\varphi$, that is 
$$ \no^2(u^\star)=\min_{v\in\mathcal H}\no^2(v),$$
where~$\mathcal H:=\{v\in H^s(\R^n){\mbox{ s.t. }} v=\varphi {\mbox{ a.e. in }}\Omega^c\}$.
We observe that the fact that~$\varphi\ge A$ implies that
\begin{equation}\label{ustar A}
u^\star\ge A>0
\end{equation}
(see Lemma 2.4 in \cite{DipVal}). 
This means that the positivity set of~$u^\star$ is the whole~$\R^n$. 
Therefore, we have that 
\begin{equation}
\no^2(u^\star)\le\no^2(u) \label{no} \;
{\mbox{ and }} \; \Per_\sigma(\R^n,\Omega)=0\le\Per_\sigma(E,\Omega).
\end{equation}
Now, we claim that 
\begin{equation}\label{per 0}
\Per_\sigma(E,\Omega)=0. 
\end{equation}
Indeed, suppose by contradiction that~$\Per_\sigma(E,\Omega)>0$. 
Then, 
$$ \Per_\sigma(E,\Omega)>\Per_\sigma(\R^n,\Omega),$$
and so, using this and~\eqref{no}, we have 
$$ \mathcal F(u^\star,\R^n)=\no^2(u^\star)+\Per_\sigma(\R^n,\Omega)<\no^2(u)+\Per_\sigma(E,\Omega)=\mathcal F(u,E),$$ 
which contradicts the minimality of~$(u,E)$. 
This shows~\eqref{per 0}. 

{F}rom~\eqref{no}, \eqref{per 0} and the minimality of~$(u,E)$, we obtain 
$$ \mathcal F(u^\star,\R^n)=\no^2(u^\star)\le\no^2(u)=\mathcal F(u,E)\le\mathcal F(u^\star,\R^n), $$ 
which implies that~$\no^2(u^\star)=\no^2(u)$. 
Since~$u^\star$ is the unique minimizer of the Dirichlet energy 
with datum~$\varphi$, this, in turn, gives that~$u=u^\star$ a.e. in~$\R^n$. 
Recalling~\eqref{ustar A} we conclude the proof in the case~$A>0$. 
\end{proof}

\section{An equivalent extended problem,
a monotonicity formula and proof of Theorem~\ref{TH:mon}}\label{sec4}

In this section, we discuss a problem on the extended variables
that is equivalent to our original minimization problem
(this can be seen as a generalization of the extension problem of \cite{CAF-SIL}).

For this, for any bounded Lipschitz domain~$\Omega\subset\R^{n+1}$
we set~$\Omega_0:=\Omega\cap \{z=0\}$ and~$\Omega_+:=\Omega\cap \{z>0\}$.
Hence, recalling~\eqref{ext} and~\eqref{extE}, we have the following characterization of minimizing pairs~$(u,E)$. 

\begin{prop}\label{char}
The pair~$(u,E)$ is minimizing in~$B_{r'}$ for every~$r'\in(0,r)$
if and only if
\begin{equation}\label{1983}\begin{split}
& \int_{\Omega_+}z^{1-2s}|\nabla\overline u|^2\, dX+c_{n,s,\sigma}
\int_{\Omega_+}z^{1-\sigma}|\nabla U|^2\, dX \\
&\qquad \le \int_{\Omega_+}z^{1-2s}|\nabla\overline v|^2\, dX+c_{n,s,\sigma}\int_{\Omega_+}z^{1-\sigma}|\nabla V|^2\, dX 
\end{split}\end{equation}
for every bounded, Lipschitz domain~$\Omega\subset\R^{n+1}$ with~$\Omega_0
\subset B_r$, and every functions~$\overline v$ and~$V$
that satisfy the following conditions:
\begin{itemize}
\item[i)] $V=U$ in a neighborhood of~$\partial\Omega$,
\item[ii)] the trace of~$V$ on~$\{z=0\}$ is~$\chi_F-\chi_{F^c}$ 
for some set~$F\subset\R^n$, 
\item[iii)] $\overline v=\overline u$ 
in a neighborhood of~$\partial\Omega$, 
and~$\overline v\big|_{\{z=0\}}\ge 0$ a.e. in~$F$,
$\overline v\big|_{\{z=0\}}\le 0$ a.e. in~$F^c$. 
\end{itemize}
\end{prop}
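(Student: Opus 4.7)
The strategy is to exploit the Caffarelli--Silvestre representation of the fractional seminorms as weighted Dirichlet energies of the Poisson-type extensions in~\eqref{ext}--\eqref{extE}, translating minimality of $\mathcal{F}$ in the trace variables into minimality of the extended energy in $\R^{n+1}_+$. With the constants in \eqref{ext}--\eqref{extE} and $c_{n,s,\sigma}$ chosen to match, the global identities
\[
\int_{\R^{n+1}_+} z^{1-2s}|\nabla\overline{u}|^2\,dX=[u]^2_{H^s(\R^n)}, \qquad c_{n,s,\sigma}\int_{\R^{n+1}_+} z^{1-\sigma}|\nabla U|^2\,dX = 2\Per_\sigma(E,\R^n)
\]
hold, and the extensions $\overline{u},U$ are the unique minimizers of the corresponding weighted Dirichlet energies among all functions with the prescribed trace on $\{z=0\}$; in particular, any extension $w$ of a trace $v$ satisfies $\int_{\R^{n+1}_+} z^{1-2s}|\nabla w|^2\ge [v]^2_{H^s(\R^n)}$, with equality iff $w$ is the Poisson extension.

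For the forward direction, fix $\Omega$ and $(\overline{v},V)$ satisfying (i)--(iii), set $v:=\overline{v}|_{z=0}$, and pick $F$ with $V|_{z=0}=\chi_F-\chi_{F^c}$. Conditions (i) and (iii), together with $\Omega_0\subset B_r$, force $v=u$ outside $\Omega_0$ and $F\setminus\Omega_0 = E\setminus\Omega_0$, and yield the sign constraint defining admissibility, so $(v,F)$ is an admissible $\mathcal{F}$-competitor in $B_r$; minimality gives $\mathcal{F}(u,E)\le\mathcal{F}(v,F)$, and since $v=u$ and $F=E$ outside $B_r$, the restricted seminorm differences appearing in $\mathcal{F}$ coincide with the global ones, so this reads $[u]^2_{H^s(\R^n)}+2\Per_\sigma(E,\R^n)\le [v]^2_{H^s(\R^n)}+2\Per_\sigma(F,\R^n)$. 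Because $\overline{v}=\overline{u}$ outside $\Omega$, one may write
\[
\int_{\Omega_+} z^{1-2s}\bigl(|\nabla\overline{v}|^2-|\nabla\overline{u}|^2\bigr)\,dX = \int_{\R^{n+1}_+} z^{1-2s}\bigl(|\nabla\overline{v}|^2-|\nabla\overline{u}|^2\bigr)\,dX \ge [v]^2_{H^s(\R^n)}-[u]^2_{H^s(\R^n)},
\]
the inequality following from the minimality of $\overline{u}$ and the suboptimality of $\overline{v}$ as extensions of their respective traces. Adding the analogous bound for $V,U$ with factor $c_{n,s,\sigma}$ yields~\eqref{1983}.

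For the reverse direction, given an admissible $\mathcal{F}$-competitor $(v,F)$ in $B_r$, let $\widetilde{v},\widetilde{V}$ be the Poisson-type extensions of $v,\chi_F-\chi_{F^c}$ via~\eqref{ext}--\eqref{extE}. For $R\gg 1$ and $0<\delta\ll 1$, take $\Omega:=B_r\times(-1,R)$ so that $\Omega_0=B_r$ and $\Omega_+=B_r\times(0,R)$, and construct $\overline{v}_{R,\delta},V_{R,\delta}$ in $\Omega_+$ that coincide with $\widetilde{v},\widetilde{V}$ on $B_{r-\delta}\times(0,R/2)$, coincide with $\overline{u},U$ in a neighborhood of the lateral and top part of $\partial\Omega_+$, and whose trace on $\Omega_0$ equals $v$ on $B_{r-\delta}$ and $u$ on $B_r\setminus B_{r-\delta}$; extend by $\overline{u},U$ outside $\Omega_+$. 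Setting $F_\delta:=(F\cap B_{r-\delta})\cup(E\setminus B_{r-\delta})$, condition (ii) holds with this $F_\delta$, and (i),~(iii) follow from the construction. Applying the extended inequality and letting $R\to\infty$, the transition cost in the annular gluing zone is negligible because $\widetilde{v}-\overline{u}$ and $\widetilde{V}-U$ are extensions of data compactly supported in $B_r$ and therefore enjoy quantitative pointwise and gradient decay at infinity; in the limit the first-paragraph identities give $\mathcal{F}(u,E)\le\mathcal{F}(v^{(\delta)},F_\delta)$, where $v^{(\delta)}$ is the $\delta$-truncated trace. Sending $\delta\to 0$, $v^{(\delta)}\to v$ and $F_\delta\to F$ in the natural admissible topology, and continuity of the energies concludes $\mathcal{F}(u,E)\le\mathcal{F}(v,F)$.

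The main obstacle is this last construction: because the trace of $V$ on $\{z=0\}$ must remain a signed characteristic function, smooth interpolation along $\Omega_0$ is forbidden and an auxiliary truncation parameter $\delta$ must be introduced separately from $R$. Quantifying the decay of the transition cost as $R\to\infty$ requires weighted estimates for the Poisson kernels of the two weights $z^{1-2s}$ and $z^{1-\sigma}$ applied to compactly supported data, together with Hardy-type bounds ensuring that the $z^{1-2s}$- and $z^{1-\sigma}$-weighted energies in the gluing region vanish in the limit.
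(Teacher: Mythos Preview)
Your forward argument hinges on the global identities $\int_{\R^{n+1}_+} z^{1-2s}|\nabla\overline{u}|^2\,dX = [u]^2_{H^s(\R^n)}$ and the analogous perimeter identity, together with the variational characterization of the Poisson extension. The key step
\[
\int_{\R^{n+1}_+} z^{1-2s}\bigl(|\nabla\overline{v}|^2-|\nabla\overline{u}|^2\bigr)\,dX \;\ge\; [v]^2_{H^s(\R^n)}-[u]^2_{H^s(\R^n)}
\]
is deduced from $\int|\nabla\overline{v}|^2\ge [v]^2$ and $\int|\nabla\overline{u}|^2=[u]^2$, but this subtraction is only legitimate when all four quantities are finite. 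In the paper's framework one merely has $\varphi\in H^s_{\rm loc}(\R^n)$ and $E_0$ of \emph{locally} finite $\sigma$-perimeter; nothing forces $[u]_{H^s(\R^n)}<\infty$ or $\Per_\sigma(E,\R^n)<\infty$ (take $u$ a nonzero constant, or $E$ a halfspace). In such cases your inequality reads $+\infty-\infty\ge+\infty-\infty$ and carries no information. Your argument is correct under the additional hypotheses $u\in H^s(\R^n)$ and $\Per_\sigma(E,\R^n)<\infty$, but not in the stated generality.

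The paper sidesteps this by never passing through global quantities. It invokes a localized infimum characterization (Lemma~7.2 of~\cite{CRS} for the perimeter term, and its analogue for the Gagliardo seminorm): for any admissible trace competitor $(v,F)$,
\[
\mathcal{F}(v,F)-\mathcal{F}(u,E)=\inf_{(\Omega,\overline v,V)}\left(\int_{\Omega_+}z^{1-2s}\bigl(|\nabla\overline v|^2-|\nabla\overline u|^2\bigr)\,dX+c_{n,s,\sigma}\int_{\Omega_+}z^{1-\sigma}\bigl(|\nabla V|^2-|\nabla U|^2\bigr)\,dX\right),
\]
the infimum running over triplets satisfying (i)--(iii). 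Only \emph{differences} of weighted Dirichlet energies on bounded domains appear, so finiteness is automatic. Both directions then follow in one line. Forward: each extended competitor $(\Omega,\overline v,V)$ produces a trace competitor $(v,F)$, minimality gives $\mathcal{F}(v,F)-\mathcal{F}(u,E)\ge 0$, and the infimum identity shows the bracketed difference for the given triplet dominates this, hence is nonnegative, which is~\eqref{1983}. Backward: the hypothesis~\eqref{1983} makes every term in the infimum nonnegative, hence the infimum itself, hence $\mathcal{F}(v,F)\ge\mathcal{F}(u,E)$. In particular, your entire limiting construction in $R$ and $\delta$ for the reverse implication becomes unnecessary once this identity is in hand; the infimum characterization already encodes the ``optimal gluing'' you are trying to build by hand.
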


\begin{proof} We show that admissible competitors in the extensions
come from admissible competitors on the trace: more precisely,
we show that
\begin{equation}\label{1LL:1}
\begin{split}
&{\mbox{if~$\Omega_*$ is compactly contained
in~$\Omega$,}}\\&{\mbox{$\overline v(X)=\overline u(X)$
for any~$X\in\Omega_*^c$
and $v(x):=\overline v(x,0)$}}\\&{\mbox{with }}
\int_{\Omega_+} z^{1-2s} |\nabla \overline v|^2\,dX <+\infty,\\
&{\mbox{then }}
\iint_{\R^{2n}\setminus (\Omega_{0})^2}
\frac{|v(x)-v(y)|^2}{|x-y|^{n+2s}}\,dx\,dy <+\infty.\end{split}\end{equation}
To prove this, we take an intermediate domain~$\Omega_\sharp$,
which is
compactly contained
in~$\Omega$ and which compactly contains~$\Omega_*$.
So we have that~$\Omega_{\sharp0}:=\Omega_\sharp\cap\{z=0\}$
is compactly contained in~$\Omega_0$
and compactly contains~$\Omega_{*0}
:=\Omega_*\cap\{z=0\}$. Also,
there exists~$\zeta
\in C^\infty_0(\Omega)$ such that~$\zeta(X)=1$ for any~$X
\in \Omega_\sharp$. We also define~$w:=\zeta\, \overline v$.
Then,
\begin{equation}\label{LL:EJ9}
\begin{split}
& \int_{\R^{n+1}_+} z^{1-2s} |\nabla w|^2\,dX
\le C\left(
\int_{\R^{n+1}_+} z^{1-2s} |\nabla \zeta|^2 \,\overline v^2\,dX
+\int_{\R^{n+1}_+} z^{1-2s} |\nabla \overline v|^2\,\zeta^2\,dX
\right)\\
&\qquad\le
C\left(
\int_{\Omega_+} z^{1-2s} \overline v^2\,dX
+\int_{\Omega_+} z^{1-2s} |\nabla \overline v|^2\,dX
\right).
\end{split}
\end{equation}
Here~$C>0$ stands for a constant, possibly depending
on~$\Omega$, $\Omega_\sharp$ and the other fixed quantities
for this proof, and possibly varying from line to line.
Using~\eqref{LL:EJ9}
and the equivalency of the fractional norms (see e.g. page~1132
in~\cite{CRS}) we obtain that
\begin{equation*}
\begin{split}
&\iint_{\Omega_{\sharp0}\times\Omega_{\sharp0}}
\frac{|v(x)-v(y)|^2}{|x-y|^{n+2s}}\,dx\,dy
+
\iint_{\Omega_{\sharp0}\times\Omega_{0}^c}
\frac{|v(x)|^2}{|x-y|^{n+2s}}\,dx\,dy
\\ &\qquad\le
\iint_{\R^{2n}}
\frac{|w(x,0)-w(y,0)|^2}{|x-y|^{n+2s}}\,dx\,dy 
\\ &\qquad\le
C\left(
\int_{\Omega_+} z^{1-2s} \overline v^2\,dX
+\int_{\Omega_+} z^{1-2s} |\nabla \overline v|^2\,dX
\right).
\end{split}\end{equation*}
This and the embedding results in weighted spaces
(see e.g. Theorem~1.3
in~\cite{FABES-KEN-SER} with~$k:=1$) give that
\begin{equation}\label{LL:EJ12}
\begin{split}&\iint_{\Omega_{\sharp0}\times\Omega_{\sharp0}}
\frac{|v(x)-v(y)|^2}{|x-y|^{n+2s}}\,dx\,dy
+
\iint_{\Omega_{\sharp0}\times\Omega_{0}^c}
\frac{|v(x)|^2}{|x-y|^{n+2s}}\,dx\,dy
\\&\qquad
\le C\int_{\Omega_+} z^{1-2s} |\nabla \overline v|^2\,dX<+\infty.
\end{split}\end{equation}
Now we take~$\tau\in C^\infty(\R^{n})$ such that~$\tau=0$
in~$\Omega_{\sharp0}$ and~$\tau=1$ in~$\Omega_{0}^c$.
Then we have 
\begin{equation}\label{LK:lKH}
\begin{split}
&\iint_{\Omega_{\sharp0}\times\Omega_{0}^c}
\frac{|\varphi(y)|^2}{|x-y|^{n+2s}}\,dx\,dy
=
\iint_{\Omega_{\sharp0}\times\Omega_{0}^c}
\frac{|(\tau\varphi)(x)-(\tau\varphi)(y)|^2}{|x-y|^{n+2s}}\,dx\,dy
\\
&\qquad
\le C\left(
\iint_{\Omega_{\sharp0}\times\Omega_{0}^c}
\frac{|\tau(x)-\tau(y)|^2\,|\varphi(x)|^2}{|x-y|^{n+2s}}\,dx\,dy\right.\\
&\qquad\qquad\left.+\iint_{\Omega_{\sharp0}\times\Omega_{0}^c}
\frac{|\varphi(x)-\varphi(y)|^2\,|\tau(y)|^2}{|x-y|^{n+2s}}\,dx\,dy
\right)\\
&\qquad
\le C\left(
\int_{\Omega_{\sharp0}}
|\varphi(x)|^2 \,dx
+\iint_{\Omega_{\sharp0}\times\Omega_{0}^c}
\frac{|\varphi(x)-\varphi(y)|^2}{|x-y|^{n+2s}}\,dx\,dy
\right).
\end{split}\end{equation}
Notice that these latter quantities
are finite, since we may assume that the external datum~$\varphi$
has finite energy (i.e., that there is one competitor for~$u$ with
finite energy). Then, using~\eqref{LK:lKH}
and the fact that~$v(y)=\overline v(y,0)=\overline u(y,0)=u(y)=\varphi(y)$
for any~$y\in\Omega_{0}^c$,
we obtain
\begin{equation}\label{LL:EJ11}
\iint_{\Omega_{\sharp0}\times\Omega_{0}^c}
\frac{|v(y)|^2}{|x-y|^{n+2s}}\,dx\,dy
=
\iint_{\Omega_{\sharp0}\times\Omega_{0}^c}
\frac{|\varphi(y)|^2}{|x-y|^{n+2s}}\,dx\,dy
<+\infty.
\end{equation}
Similarly (taking the function~$\tau$ to be~$0$
in~$\Omega_{*0}$ and~$\tau=1$ in~$\Omega_{\sharp0}^c$),
we obtain
\begin{equation}\label{LL:EJ11:SIM}
\iint_{\Omega_{*0}\times\Omega_{\sharp0}^c}
\frac{|\varphi(y)|^2}{|x-y|^{n+2s}}\,dx\,dy
<+\infty.
\end{equation}
Using \eqref{LL:EJ11} and~\eqref{LL:EJ12}, we conclude that
\begin{equation}\label{LL:EJ13}
\iint_{\Omega_{\sharp0}\times\Omega_{\sharp0}}
\frac{|v(x)-v(y)|^2}{|x-y|^{n+2s}}\,dx\,dy
+
\iint_{\Omega_{\sharp0}\times\Omega_{0}^c}
\frac{|v(x)-v(y)|^2}{|x-y|^{n+2s}}\,dx\,dy<+\infty.\end{equation}
In addition,
\begin{eqnarray*}
&& \iint_{\Omega_{*0}\times(\Omega_0\setminus \Omega_{\sharp0})}
\frac{|v(x)-v(y)|^2}{|x-y|^{n+2s}}\,dx\,dy
\\
&=&
\iint_{\Omega_{* 0}\times(\Omega_0\setminus \Omega_{\sharp0})}
\frac{|v(x)-\varphi(y)|^2}{|x-y|^{n+2s}}\,dx\,dy
\\ &\le& C\left(
\int_{\Omega_{*0}} |v(x)|^2 \,dx
+
\iint_{\Omega_{* 0}\times(\Omega_0\setminus \Omega_{\sharp0})}
\frac{|\varphi(y)|^2}{|x-y|^{n+2s}}\,dx\,dy
\right).
\end{eqnarray*}
Hence, using \eqref{LL:EJ11:SIM}
and the Trace Inequality in the weighted
spaces (see e.g. Lemma~3.2 in~\cite{SERENA}),
we have that
\begin{equation}\label{KL:LL00yha-f}
\iint_{\Omega_{*0}\times(\Omega_0\setminus \Omega_{\sharp0})}
\frac{|v(x)-v(y)|^2}{|x-y|^{n+2s}}\,dx\,dy
<+\infty.\end{equation}
Furthermore,
$$ \iint_{(\Omega_0\setminus \Omega_{*0})^2}
\frac{|v(x)-v(y)|^2}{|x-y|^{n+2s}}\,dx\,dy
= \iint_{(\Omega_0\setminus \Omega_{*0})^2}
\frac{|\varphi(x)-\varphi(y)|^2}{|x-y|^{n+2s}}\,dx\,dy<+\infty.$$
Using this, \eqref{LL:EJ13}
and~\eqref{KL:LL00yha-f},
one establishes~\eqref{1LL:1}.

{F}rom~\eqref{1LL:1} and Lemma~7.2 of~\cite{CRS}, we know that, for any~$E\subseteq \R^n$
with~$U$ as in~\eqref{extE}, and for any~$F\subset\R^n$
that coincides with~$E$ outside a compact subset of~$B_r$, we have that
\begin{equation}\label{equiv 1}
\Per_\sigma (F,B_r)-\Per_\sigma(E,B_r)=
c_{n,\sigma} \inf_{(\Omega,V)\in I_\sigma} \int_{\Omega_+} z^{1-\sigma}
\Big( |\nabla V|^2-|\nabla U|^2\Big)\,dX.\end{equation}
The set~$I_\sigma$ above consists
of the couples of every bounded Lipschitz set~$\Omega\subset\R^{n+1}$
such that~$\Omega_0\subset B_r$ and every function~$V$
that coincides with~$U$ near~$\partial \Omega$ and such that~$V(x,0)=
(\chi_F-\chi_{F^c})(x)$. Without loss of generality,
we can prescribe that~$V=U$ outside~$\Omega$, since this
does not change the above integrals.

Similarly, for any function~$u$, with~$\overline{u}$ defined in~\eqref{ext},
and any~$v$ that coincides with~$u$
outside a compact subset of~$B_r$, we have that
\begin{equation}\label{equiv 2}\begin{split}
&\iint_{\R^{2n}\setminus(B_r^c)^2}
\frac{|v(x)-v(y)|^2 - |u(x)-u(y)|^2}{|x-y|^{n+2s}}\, dx\,dy
\\ &\qquad= c_{n,s}\inf_{(\Omega,\overline v)\in I_s}
\int_{\Omega_+} z^{1-2s}
\Big( |\nabla \overline{v}|^2-|\nabla \overline{u}|^2\Big)\,dX,
\end{split}\end{equation}
where~$I_s$ above consists
of the couples of every bounded Lipschitz set~$\Omega$
such that~$\Omega_0\subset B_r$ and every function~$\overline{v}$
that coincides with~$\overline{u}$ near~$\partial \Omega$ and
such that~$\overline v(x,0)=v(x)$. Once again,
without loss of generality,
we can prescribe
that~$\overline{v}=\overline{u}$ outside~$\Omega$.

Now we define
\begin{equation}\label{1984}\begin{aligned}
{\mathcal{G}}_\sigma(\Omega,V):=
c_{n,\sigma}\int_{\Omega_+} z^{1-\sigma}
\Big( |\nabla V|^2-|\nabla U|^2\Big)\,dX \\
 {\mbox{and }} \quad
{\mathcal{G}}_s(\Omega,\overline{v}):=
c_{n,s}\int_{\Omega_+} z^{1-2s}
\Big( |\nabla \overline{v}|^2-|\nabla \overline{u}|^2\Big)\,dX
\end{aligned}\end{equation}
and we show that
\begin{equation}\label{equiv 3}
\inf_{(\Omega,V)\in I_\sigma} 
{\mathcal{G}}_\sigma(\Omega,V)
+
\inf_{(\Omega,\overline v)\in I_s}
{\mathcal{G}}_s(\Omega,\overline{v})
=\inf_{(\Omega,\overline v,V)\in {\mathcal{I}}_{s,\sigma} }
\left( 
{\mathcal{G}}_\sigma(\Omega,V)
+
{\mathcal{G}}_s(\Omega,\overline{v})\right)
\end{equation}
where~${\mathcal{I}}_{s,\sigma}$ consists
of the triplets of every bounded Lipschitz set~$\Omega$
such that~$\Omega_0\subset B_r$, every function~$\overline{v}$
that coincides with~$\overline{u}$ 
outside a compact subset of~$\Omega$ and
such that~$\overline v(x,0)=v(x)$,
and every function~$V$
that coincides with~$U$ 
outside a compact subset of~$\Omega$
and such that~$V(x,0)=
(\chi_F-\chi_{F^c})(x)$.
To show~\eqref{equiv 3}, first take
a triplet~$(\Omega,\overline v,V)\in {\mathcal{I}}_{s,\sigma}$.
Then, by construction,~$(\Omega,V)\in I_\sigma$
and~$(\Omega,\overline v)\in I_s$, therefore
$$ \inf_{(\Omega,V)\in I_\sigma}
{\mathcal{G}}_\sigma(\Omega,V)+
\inf_{(\Omega,\overline v)\in I_s}
{\mathcal{G}}_s(\Omega,\overline{v}) \le 
{\mathcal{G}}_\sigma(\Omega,V)+
{\mathcal{G}}_s(\Omega,\overline{v})$$
and so
$$\inf_{(\Omega,V)\in I_\sigma}
{\mathcal{G}}_\sigma(\Omega,V)+
\inf_{(\Omega,\overline v)\in I_s}
{\mathcal{G}}_s(\Omega,\overline{v})
\le \inf_{(\Omega,\overline v,V)\in {\mathcal{I}}_{s,\sigma} }
\left( {\mathcal{G}}_\sigma(\Omega,V)+
{\mathcal{G}}_s(\Omega,\overline{v})\right).$$
This shows one inequality in~\eqref{equiv 3}
and we now focus on the reverse inequality.
For this, we fix~$\eta>0$ and we take~$(\Omega^{1,\eta},V^\eta)\in I_\sigma$
and~$(\Omega^{2,\eta},\overline v^\eta)\in I_s$ such that
\begin{equation}\label{equiv 4}
\eta+ \inf_{(\Omega,V)\in I_\sigma}
{\mathcal{G}}_\sigma(\Omega,V)+
\inf_{(\Omega,\overline v)\in I_s}
{\mathcal{G}}_s(\Omega,\overline{v})
\ge 
{\mathcal{G}}_\sigma(\Omega^{1,\eta},V^\eta)+
{\mathcal{G}}_s(\Omega^{2,\eta},\overline{v}^\eta).
\end{equation}
Let~$\Omega^\eta:=\Omega^{1,\eta}\cup\Omega^{2,\eta}$.
Since~$\Omega^\eta$ contains both~$\Omega^{1,\eta}$
and~$\Omega^{2,\eta}$, we have that~$\overline{v}^\eta$
coincides with~$\overline{u}$
outside a compact subset of~$\Omega^\eta$ and~$V$
coincides with~$U$
outside a compact subset of~$\Omega^\eta$.
Accordingly,~$(\Omega^\eta,\overline v^\eta,V^\eta)
\in {\mathcal{I}}_{s,\sigma}$
and so
\begin{eqnarray*}
{\mathcal{G}}_\sigma(\Omega^{1,\eta},V^\eta)+
{\mathcal{G}}_s(\Omega^{2,\eta},\overline{v}^\eta)
& =&
{\mathcal{G}}_\sigma(\Omega^\eta,V^\eta)+
{\mathcal{G}}_s(\Omega^\eta,\overline{v}^\eta)
\\ &\ge &
\inf_{(\Omega,\overline v,V)\in {\mathcal{I}}_{s,\sigma} }
\left( {\mathcal{G}}_\sigma(\Omega,V)+
{\mathcal{G}}_s(\Omega,\overline{v})
\right).
\end{eqnarray*}
By plugging this into~\eqref{equiv 4}, we obtain
$$
\eta+\inf_{(\Omega,V)\in I_\sigma}
{\mathcal{G}}_\sigma(\Omega,V)+
\inf_{(\Omega,\overline v)\in I_s}
{\mathcal{G}}_s(\Omega,\overline{v})\ge
\inf_{(\Omega,\overline v,V)\in {\mathcal{I}}_{s,\sigma} }
\left( {\mathcal{G}}_\sigma(\Omega,V)+
{\mathcal{G}}_s(\Omega,\overline{v})
\right).
$$
So we take~$\eta$ as small as we wish and we complete
the proof of the reverse inequality in~\eqref{equiv 3}.

Having established~\eqref{equiv 3}, we can sum up~\eqref{equiv 1}
and~\eqref{equiv 2} (taking~$E$ as the positivity set of $u$ and recalling \eqref{1984}) and obtain
\begin{equation}\label{1985}
{\mathcal{F}}(v,F)-{\mathcal{F}}(u,E)
=\inf_{(\Omega,\overline v,V)\in {\mathcal{I}}_{s,\sigma} }
\left(
{\mathcal{G}}_\sigma(\Omega,V)
+{\mathcal{G}}_s(\Omega,\overline{v}) \right).
\end{equation}
{F}rom this, we obtain the desired result by arguing
as follows. First, suppose that $(u,E)$ is a minimizing
pair and take $\Omega$, $\overline v$, and $V$
as in the statement of Proposition \ref{char}. We
define $v(x):=\overline v(x,0)$. Then, the triplet
$(\Omega,\overline v,V)$ belongs to ${\mathcal{I}}_{s,\sigma}$
and therefore, by \eqref{1985} we have that
\begin{equation}\label{1986}
{\mathcal{F}}(v,F)-{\mathcal{F}}(u,E)\le
{\mathcal{G}}_\sigma(\Omega,V)
+{\mathcal{G}}_s(\Omega,\overline{v}).\end{equation}
On the other hand, by item iii)
in the statement of Proposition \ref{char}, we have that
$v(x)=\overline v(x,0)=\overline u(x,0)
=u(x)$ outside a compact subset of $B_r$. Similarly,
by item i) and ii), we have that
$(\chi_{F}-\chi_{F^c})(x)=V(x,0)=U(x,0)=
(\chi_{E}-\chi_{E^c})(x)$ outside a compact subset of $B_r$.
Moreover, $v(x)=\overline v(x,0)\ge 0$ for a.e. $x\in F$
and $v(x)=\overline v(x,0)\le 0$ for a.e. $x\in F^c$,
thanks to item iii). As a consequence, $v$ and $F$
are admissible competitors with respect to
\eqref{Def 1} and \eqref{Def 2}, hence the minimality of $(u,E)$ gives that ${\mathcal{F}}(u,E)
\le {\mathcal{F}}(v,F)$. By inserting this into \eqref{1986}
we obtain
$$ 0\le
{\mathcal{G}}_\sigma(\Omega,V)
+{\mathcal{G}}_s(\Omega,\overline{v}),$$
which, recalling \eqref{1984}, gives \eqref{1983}.

Now, viceversa, suppose that 
\begin{equation}\label{1989}\begin{split}
&{\mbox{formula \eqref{1983} holds under
conditions i), ii) and iii)}}\\ &\qquad{\mbox{ of the
statement of Proposition \ref{char},}}\end{split}
\end{equation}
and let $(v,F)$
be a competing pair
according to \eqref{Def 1} and \eqref{Def 2}.
We show that, in this case,
\begin{equation}\begin{split}\label{1990}
& {\mbox{any
triplet $(\Omega,\overline v,V)\in {\mathcal{I}}_{s,\sigma}$,
satisfies}}\\
& {\mbox{conditions i), ii) and iii) of the
statement of Proposition \ref{char}.}}\end{split}\end{equation}
Indeed, since $(v,F)$ satisfies
\eqref{Def 1} and \eqref{Def 2} we have that
$\overline v(x,0)=v(x)\ge 0$ a.e. in $F$ and
$\overline v(x,0)=v(x)\le 0$ a.e. in $F^c$. This
and the definition of ${\mathcal{I}}_{s,\sigma}$
give \eqref{1990}.

By \eqref{1989} and \eqref{1990}
we obtain that \eqref{1983} holds true for any
triplet $(\Omega,\overline v,V)\in {\mathcal{I}}_{s,\sigma}$.
This means, recalling \eqref{1984}, that
$$ {\mathcal{G}}_\sigma(\Omega,V)
+{\mathcal{G}}_s(\Omega,\overline{v})\ge0$$
for any triplet $(\Omega,\overline v,V)\in {\mathcal{I}}_{s,\sigma}$.
Consequently, by \eqref{1985}, we obtain that 
$$ {\mathcal{F}}(v,F)-{\mathcal{F}}(u,E)\ge0,$$
which shows that $(u,E)$ is minimizing and thus it completes the proof
of Proposition~\ref{char}.\end{proof}

Now we address the proof of
Theorem~\ref{TH:mon}, with the aid of some simple but useful lemmata. 

\begin{lemma}\label{lem:eq}
Let~$c\in\R$ and~$u\in W^{1,1}( B_r)$ be a function satisfying 
\begin{equation}\label{CHI:LKa-89} \nabla u(x)\cdot x= c\, u(x), \quad {\mbox{ for a.e. }}x\in B_r. \end{equation}
Then~$u$ is homogeneous of degree~$c$ (more precisely, $u$
can be extended to a function defined in the whole of~$\R^n\setminus\{0\}$
that is homogeneous of degree~$c$). 
\end{lemma}

\begin{proof} Let~$r'\in(0,r)$ and~$\eta\in C^\infty_0(B_{r'})$.
For~$t\in \R$, with~$|t|< r/r'$, we define
$$ \alpha(t):=\int_{B_{r'}}u(tx)\,\eta(x)\,dx \qquad{\mbox{ and }}\qquad
\beta(t):= t^c\int_{B_{r'}} u(x)\,\eta(x)\,dx.$$
Let also~$\varphi(t):=\alpha(t)-\beta(t)$
and~$\eta_*(y,t):=\eta(y/t)$.
We observe that
$$ \nabla_y \eta_*(y,t)= \frac{1}{t} \nabla \eta\left(\frac{y}{t}\right)$$
and, changing variable~$y:=tx$,
$$ \alpha(t)=\frac{1}{t^n}\int_{\R^n} u(y)\,
\eta\left(\frac{y}{t}\right)\,dy.$$
As a consequence,
\begin{equation}\label{LK:980:304}
\begin{split}
\alpha'(t)\,&=
-\frac{n}{t^{n+1}}\int_{\R^n} u(y)\,
\eta\left(\frac{y}{t}\right)\,dy
-\frac{1}{t^{n+2}}\int_{\R^n} u(y)\,
\nabla\eta\left(\frac{y}{t}\right)\cdot y\,dy\\
&= \frac{1}{t^{n+1}}\left[
-n\int_{\R^n} u(y)\,
\eta_*(y,t)\,dy
-\int_{\R^n} u(y)\,
\nabla_y\eta_*(y,t)\cdot y\,dy
\right] \\
&=
\frac{1}{t^{n+1}}\left[
\int_{\R^n} \nabla u(y)\cdot y\,
\eta_*(y,t)\,dy -
\int_{\R^n} \,{\rm div}_y\, \Big( u(y)\,
\eta_*(y,t)\,y\Big)\,dy
\right]
\\ &=
\frac{1}{t^{n+1}}
\int_{\R^n} \nabla u(y)\cdot y\,
\eta_*(y,t)\,dy \\ &= 
\frac{c}{t^{n+1}}
\int_{\R^n} u(y)\, 
\eta\left(\frac{y}{t}\right)\,dy
\\ &= 
\frac{c}{t}
\int_{\R^n} u(tx)\,
\eta(x)\,dx
\\ &= \frac{c}{t}\alpha(t),
\end{split}\end{equation}
where~\eqref{CHI:LKa-89} was also used.
In addition,
$$ \beta'(t)=
ct^{c-1}\int_{B_{r'}} u(x)\,\eta(x)\,dx=
\frac{c}{t}\beta(t).$$
{F}rom this and~\eqref{LK:980:304}, we obtain that
$$\varphi'(t)=\frac{c}{t} \varphi(t)$$
for any~$t$ close to~$1$.
Also, we have that
$$ \alpha(1)=\int_{B_{r'}}u(x)\,\eta(x)\,dx =\beta(1),$$
and so~$\varphi(1)=0$. By uniqueness of the Cauchy problem
(starting at time~$t=1$), we get that~$\varphi$ vanishes identically,
and so
$$ \int_{B_{r'}} \Big[ u(tx)
- t^c u(x)\Big]\,\eta(x)\,dx =0.$$
Since this is valid for any test function~$\eta$,
we conclude that~$u(tx)
=t^c u(x)$, a.e. in~$x\in B_{r'}$, for any~$t\in\R$ with~$|t|< r/r'$.
Since~$r'$ can be chosen arbitrarily in~$(0,r)$,
this means that~$u(tx)
=t^c u(x)$, for a.e. fixed~$x\in B_r$, and for any~$t\in\R$
with~$|t|<r/|x|$.
This proves the homogeneity of~$u$ in~$B_r$
and then one can extend it on the whole of the space.
\end{proof}

In the following lemma we show that~$\Phi_u$, defined in~\eqref{Phi}, 
possesses a natural scaling. 

\begin{lemma}\label{lem:scaling}
Let~$(u,E)$ be a minimizing pair in~$B_\rho$ and let~$\Phi_u$ be as in~\eqref{Phi}.
Let also
\begin{equation}\label{GU}
G_u(r):=
r^{\sigma-n}\left(\int_{\mathcal B^+_r}z^{1-2s}
|\nabla\overline{u}|^2\,dX+
c_{n,s,\sigma}\int_{\mathcal B^+_r}z^{1-\sigma}|\nabla U|^2\,dX\right),\end{equation}
and let~$(u_r,E_r)$ 
be the rescaled pair defined in~\eqref{rescaled}. Then, for any~$t>0$,
\begin{equation}\label{p15} G_u(rt)=G_{u_r}(t)
\ {\mbox{ and }} \
\Phi_u(rt)=\Phi_{u_r}(t). \end{equation}
\end{lemma}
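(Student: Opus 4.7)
The plan is to reduce both identities in \eqref{p15} to a direct change of variables, once I determine how the Poisson extensions $\overline u$ and $U$ themselves transform under the rescaling in \eqref{rescaled}. First I would work out the scaling of the two Poisson kernels: from the explicit formulas in \eqref{ext} and \eqref{extE}, the substitution $y\mapsto ry$ in the convolution gives $P_s(X/r)=r^n P_s(X)$ and $P_\sigma(X/r)=r^n P_\sigma(X)$ (the $-n$ homogeneity is forced by the normalization of the integrals). Combining this with $u_r(x)=r^{\sigma/2-s}u(rx)$ yields $\overline{u_r}(X)=r^{\sigma/2-s}\,\overline u(rX)$, while the analogous computation, with no prefactor since $\chi_{E_r}-\chi_{E_r^c}$ is merely the rescaling of $\chi_E-\chi_{E^c}$, gives $U_{E_r}(X)=U(rX)$.

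Next I would plug these two identities into the definition \eqref{GU} of $G_{u_r}(t)$ and perform the change of variable $Y=rX$ in the bulk integrals. The Jacobian contributes $r^{-(n+1)}$, the weight $z^{1-2s}$ contributes $r^{2s-1}$ (respectively $z^{1-\sigma}$ contributes $r^{\sigma-1}$ in the second integral), and $\nabla\overline{u_r}(X)=r^{1+\sigma/2-s}\nabla\overline u(rX)$ (respectively $\nabla U_{E_r}(X)=r\,\nabla U(rX)$). Collecting exponents, one verifies that both bulk integrals are multiplied by exactly $r^{\sigma-n}$; combined with the prefactor $t^{\sigma-n}$ in $G_{u_r}(t)$ this produces $(rt)^{\sigma-n}$ and hence the first identity $G_{u_r}(t)=G_u(rt)$.

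For the second identity in \eqref{p15} I would handle the remaining surface term separately, writing
\[
\Phi_u(r)=G_u(r)-\left(s-\tfrac{\sigma}{2}\right)r^{\sigma-n-1}\int_{\partial\mathcal B^+_r}z^{1-2s}\,\overline u^2\,d\mathcal H^n.
\]
On $\partial\mathcal B^+_t$ the substitution $Y=rX$ rescales the surface measure as $d\mathcal H^n|_{\partial\mathcal B^+_t}=r^{-n}d\mathcal H^n|_{\partial\mathcal B^+_{rt}}$, the weight contributes $r^{2s-1}$, and $\overline{u_r}^2$ contributes $r^{\sigma-2s}$; altogether the surface integral acquires the factor $r^{\sigma-n-1}$, which combines with $t^{\sigma-n-1}$ into $(rt)^{\sigma-n-1}$, matching the surface term of $\Phi_u(rt)$. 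Together with the first identity, this yields $\Phi_{u_r}(t)=\Phi_u(rt)$.

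The only substantive step is identifying the scaling laws $\overline{u_r}(X)=r^{\sigma/2-s}\overline u(rX)$ and $U_{E_r}(X)=U(rX)$; the rest is bookkeeping. The main obstacle I anticipate is purely algebraic, namely tracking the three independent exponents ($-(n+1)$ from the Jacobian, $2s-1$ or $\sigma-1$ from the weight, and $\sigma/2-s$ from the homogeneity of $u_r$) without sign errors, and verifying that the choice of the exponent $\sigma/2-s$ in \eqref{rescaled} is precisely what makes the $z^{1-2s}|\nabla\overline u|^2$ and $z^{1-\sigma}|\nabla U|^2$ pieces carry the same scaling power $r^{\sigma-n}$, so that the two summands in $G_u$ can be rescaled simultaneously.
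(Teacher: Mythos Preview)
Your proposal is correct and follows exactly the same approach as the paper: the paper's proof consists of the single observation that $\overline{u}_r(X)=r^{\frac{\sigma}{2}-s}\overline{u}(rX)$ and $U_r(X)=U(rX)$, leaving the change-of-variable bookkeeping implicit. Your write-up simply makes that bookkeeping explicit, and your exponent tracking is accurate.
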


\begin{proof} The claim follows by observing that~$
\overline{u}_r(X)=r^{\frac{\sigma}{2}-s}\overline{u}(rX)$ and~$
U_r(X)=U(rX)$.
\end{proof}

With this, we are in the position of proving Theorem~\ref{TH:mon}.

\begin{proof}[Proof of Theorem~\ref{TH:mon}] 
We will prove that 
$$ \frac{d}{dr}\Phi_u(r)\ge 0 \quad {\mbox{ for a.e. }}r. $$  
We write 
$$ \Phi_u(r)=G_u(r)-H_u(r), $$ 
where $G_u$ is as in~\eqref{GU} and
$$ H_u(r):=\, \left(s-\frac{\sigma}{2}\right) r^{\sigma-n-1}\int_{\partial\mathcal B^+_r}z^{1-2s}\, \overline u^2\,d\mathcal H^n.$$
Thanks to the scaling properties in Lemma~\ref{lem:scaling}, 
it is sufficient to prove the theorem
when~$r=1$.

Given a small~$\epsilon>0$, we consider a competitor~$(\overline{u}^\epsilon,U^\epsilon)$ 
for~$(\overline{u},U)$ defined as follows
$$ 
\overline{u}^\epsilon(X):=\left\{ 
\begin{array}{lll} 
(1-\epsilon)^{s-\frac{\sigma}{2}}\, \overline u\left(\frac{X}{1-\epsilon}\right)   \quad & {\mbox{ if }} X\in\mathcal B_{1-\epsilon}^+,\\
|X|^{s-\frac{\sigma}{2}}\, \overline u\left(\frac{X}{|X|}\right) \quad & {\mbox{ if }} X\in\mathcal B_1^+\setminus\mathcal B_{1-\epsilon}^+,\\
\overline u(X) \quad & {\mbox{ if }} X\in (\mathcal B_1^+)^c,
\end{array} 
\right.
$$
and 
$$
U^\epsilon(X):=\left\{ 
\begin{array}{lll} 
U\left(\frac{X}{1-\epsilon}\right) \quad & {\mbox{ if }} X\in\mathcal B_{1-\epsilon}^+,\\
U\left(\frac{X}{|X|}\right) \quad & {\mbox{ if }} X\in\mathcal B_1^+\setminus\mathcal B_{1-\epsilon}^+,\\
U(X) \quad & {\mbox{ if }} X\in (\mathcal B_1^+)^c.
\end{array} 
\right.
$$
Since the pair~$(u,E)$ is a minimizer and~$\overline{u}^\eps$ and $U^\eps$ 
satisfy conditions i), ii) and iii) in the statement of Proposition~\ref{char}, 
from~\eqref{1983} we have that 
\begin{equation}\label{mini}
G_u(1)\le G_{u^\epsilon}(1),
\end{equation} 
where $u^\epsilon(x):=\overline{u}^\epsilon(x,0)$. 
Now, we compute~$G_u(1)$ and~$G_{u^\epsilon}(1)$ by splitting the integrals 
in~$\mathcal B_1^+$ into integrals in~$\mathcal B_{1-\epsilon}^+$ 
and~$\mathcal B_1^+\setminus\mathcal B_{1-\epsilon}^+$. 
Therefore, we have 
\begin{equation}\begin{split}\label{Gu}
G_u(1)=&\, \int_{\mathcal B_{1-\epsilon}^+}z^{1-2s}|\nabla\overline u|^2\, dX+ 
\epsilon\int_{\partial\mathcal B_1^+}z^{1-2s}|\nabla\overline u|^2\,d\mathcal H^n\\
&\quad + c_{n,s,\sigma}\left(\int_{\mathcal B_{1-\epsilon}^+}z^{1-\sigma}|\nabla U|^2\,dX+\epsilon\int_{\partial\mathcal B_1^+}z^{1-\sigma}|\nabla U|^2\,d\mathcal H^n\right) +o(\epsilon)\\ 
=&\, (1-\epsilon)^{n-\sigma}G_u(1-\epsilon)+\epsilon\int_{\partial\mathcal B_1^+} z^{1-2s}\left(|\overline{u}_\tau|^2 +|\overline{u}_\nu|^2\right)d\mathcal H^n\\
&\quad +\epsilon\, c_{n,s,\sigma}\int_{\partial\mathcal B_1^+}z^{1-\sigma}\left(|U_\tau|^2 +|U_\nu|^2\right)d\mathcal H^n +o(\epsilon), 
\end{split}\end{equation}
where, as usual, $u_\tau$ and~$u_\nu$ stand for the tangential and the normal gradient of~$u$ on~$\partial\mathcal B_1^+$. 

To compute~$G_{u^\epsilon}(1)$ we notice that~$\overline{u}^\epsilon$ 
and~$U^\epsilon$ coincide with the rescaling~$\overline{u}_{1/(1-\epsilon)}$ 
and~$U_{1/(1-\epsilon)}$, respectively, in~$\mathcal B_{1-\epsilon}^+$,
as given in~\eqref{rescaled},
hence
\begin{equation}\begin{split}\label{Gueps}
G_{u^\epsilon}(1)=&\, (1-\epsilon)^{n-\sigma}G_{u_{1/(1-\epsilon)}}(1-\epsilon)
+\epsilon\, c_{n,s,\sigma}\int_{\partial\mathcal B_1^+}z^{1-\sigma}|U_\tau|^2\, d\mathcal H^n\\
&\quad +\epsilon\int_{\partial\mathcal B_1^+}z^{1-2s}\left(|\overline{u}_\tau|^2+\left(s-\frac{\sigma}{2}\right)^2\overline{u}^2\right)d\mathcal H^n
+o(\epsilon).  
\end{split}\end{equation} 
Also, from Lemma~\ref{lem:scaling}
(used here with~$t:=1-\epsilon$ and~$r:=1/(1-\epsilon)$), we see that
$$ G_{u_{1/(1-\epsilon)}}(1-\epsilon)= G_u(1). $$ 
Therefore, \eqref{Gueps} 
becomes
\begin{equation}\begin{split}\label{Gueps2}
G_{u^\epsilon}(1)=&\, (1-\epsilon)^{n-\sigma}G_u(1)
+\epsilon\, c_{n,s,\sigma}\int_{\partial\mathcal B_1^+}z^{1-\sigma}|U_\tau|^2\, d\mathcal H^n\\
&\quad +\epsilon\int_{\partial\mathcal B_1^+}z^{1-2s}\left(|\overline{u}_\tau|^2+\left(s-\frac{\sigma}{2}\right)^2\overline{u}^2\right)d\mathcal H^n
+o(\epsilon).  
\end{split}\end{equation} 
Plugging~\eqref{Gu} and~\eqref{Gueps2} into~\eqref{mini} we obtain
\begin{eqnarray*}
(1-\epsilon)^{n-\sigma} G_u(1) \ge (1-\epsilon)^{n-\sigma} G_u(1-\epsilon) 
&+& \epsilon \int_{\partial\mathcal B_1^+} z^{1-2s} 
\left[ |\overline{u}_\nu|^2-\left(s-\frac{\sigma}{2}\right)^2\overline{u}^2\right]d\mathcal H^n\\
&+& \epsilon\, c_{n,s,\sigma}\int_{\partial\mathcal B_1^+}z^{1-\sigma}|U_\nu|^2\,d\mathcal H^n +o(\epsilon),
\end{eqnarray*}
which implies 
\begin{equation}\label{Gfinal}
G'_u(1)\ge \int_{\partial\mathcal B_1^+} z^{1-2s} 
\left[ |\overline{u}_\nu|^2-\left(s-\frac{\sigma}{2}\right)^2\overline{u}^2\right]d\mathcal H^n + c_{n,s,\sigma}\int_{\partial\mathcal B_1^+}z^{1-\sigma}|U_\nu|^2\,d\mathcal H^n.
\end{equation}

Now, we claim that 
\begin{equation}\label{HHH}
H_u'(1)= \left(s-\frac{\sigma}{2}\right)\int_{\partial\mathcal B_1^+}z^{1-2s}\left(2\,\overline{u}\,\overline{u}_\nu
+(\sigma-2s)\overline{u}^2\right)d\mathcal H^n. 
\end{equation} 
For this, we notice that, by using the change of variable~$X=rY$, with $z=rw$, 
we can rewrite~$H_u(r)$ as 
$$ H_u(r)=\left(s-\frac{\sigma}{2}\right)r^{\sigma-2s}\int_{\partial\mathcal B_1^+}w^{1-2s}\, 
\overline{u}^2(rY)\,d\mathcal H^n. $$
Taking the derivative with respect to $r$ and then setting $r=1$ 
we obtain \eqref{HHH}. 

From~\eqref{Gfinal} and~\eqref{HHH} we deduce that 
\begin{eqnarray*}
\Phi'_u(1)&\ge & \int_{\partial\mathcal B_1^+}z^{1-2s}\left[|\overline{u}_\nu|^2-\left(s-\frac{\sigma}{2}\right)^2\overline{u}^2\right]d\mathcal H^n +c_{n,s,\sigma} \int_{\partial\mathcal B_1^+}z^{1-\sigma}|U_\nu|^2\, d\mathcal H^n \\
&&\qquad -\left(s-\frac{\sigma}{2}\right)\int_{\partial\mathcal B_1^+}z^{1-2s}\left(2\,\overline{u}\,\overline{u}_\nu +(\sigma-2s)\overline{u}^2\right)d\mathcal H^n. 
\end{eqnarray*}
Notice that 
$$|\overline{u}_\nu|^2-\left(s-\frac{\sigma}{2}\right)^2\overline{u}^2-2\left(s-\frac{\sigma}{2}\right)\overline{u}\,\overline{u}_\nu-\left(s-\frac{\sigma}{2}\right)(\sigma-2s)\overline{u}^2 =
\left(\overline{u}_\nu-\left(s-\frac{\sigma}{2}\right)\overline{u}\right)^2, $$
and so 
\begin{equation}\label{monooo}
\Phi'_u(1)\ge \int_{\partial\mathcal B_1^+}z^{1-2s}\left(\overline{u}_\nu-\left(s-\frac{\sigma}{2}\right)\overline{u}\right)^2 \, d\mathcal H^n +c_{n,s,\sigma}\int_{\partial\mathcal B_1^+}z^{1-\sigma}|U_\nu|^2\, d\mathcal H^n. 
\end{equation}
This implies that~$\Phi_u$ is increasing in~$(0,\rho)$.  

Moreover, if~$\Phi_u$ is constant, then~\eqref{monooo} and 
Lemma~\ref{lem:eq} give that~$\overline{u}$ is homogeneous of degree~$s-\frac{\sigma}{2}$ 
and~$U$ is homogeneous of degree~$0$. 
Conversely, suppose that~$\overline{u}$ and~$U$ are homogeneous 
of degree~$s-\frac{\sigma}{2}$ and~$0$ respectively. 
Then,~$u=u_r$ for any~$r>0$, and therefore 
from Lemma~\ref{lem:scaling} we have that~$\Phi_u(rt)=\Phi_u(t)$, 
which implies that~$\Phi_u$ is constant. 
This concludes the proof of Theorem~\ref{TH:mon}.
\end{proof}

\section{Dimensional reduction and proof of Theorem~\ref{DR}}\label{DRA}

In order to establish the dimensional reduction property, as stated in
Theorem~\ref{DR},
we recall a useful gluing result from Lemma 10.2 of \cite{CRS}
(this is indeed just the translation of such result by some $a$ in the $(n+1)$th
component):

\begin{lemma}\label{10.2}
Fix $R$, $a>0$. Let $\alpha\in(-1,1)$.
Let $W$ be a bounded function in ${\mathcal{B}}_R^+\subset\R^{n+1}$.
Suppose that
$$ {\mbox{$W=0$ in a neighborhood of $\partial {\mathcal{B}}_R$ and }}
\int_{{\mathcal{B}}_R^+} z^\alpha |\nabla W|^2 dX < \infty,$$
where $X=(x,z)\in\R^{n+1}$.
Then there exists a function ${\mathcal{W}}={\mathcal{W}}(x,x_{n+1},z)$
defined on ${\mathcal{B}}_R^+ \times [a-1,a+1]$
with the following properties:
\begin{eqnarray}
&& {\mathcal{W}}=0 {\mbox{ if }} x_{n+1} < a-\frac12, \label{preCOMP0}\\
&& {\mathcal{W}}=W {\mbox{ if }} x_{n+1} > a+\frac12, \label{preCOMP1.1}\\
&& {\mathcal{W}}=0 {\mbox{ in a neighborhood of }} \partial{\mathcal{B}}_R^+\times [a-1,a+1],
\label{preCOMP1.2}\\
&& {\mathcal{W}}(x,x_{n+1},0)=
\left\{
\begin{matrix}
0 & {\mbox{ if }} x_{n+1}\leq a,\\
W(x,0) & {\mbox{ if }} x_{n+1}>a.
\end{matrix}
\right.
\label{0.3bis}
\end{eqnarray}
with
\begin{equation*}
C({\mathcal{W}}):=\int_{ {\mathcal{B}}_R^+ \times [a-1,a+1]} 
z^\alpha |\nabla {\mathcal{W}}|^2 d{\mathcal{X}}
{\mbox{ finite and independent of $a$,}}
\end{equation*}
where ${\mathcal{X}}=(x,x_{n+1},z)\in\R^{n+2}$.
\end{lemma}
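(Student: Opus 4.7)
The plan is to deduce the statement directly from its $a=0$ counterpart, which is the original Lemma~10.2 of \cite{CRS}. Denote by $\mathcal{W}_0(x,y,z)$ the function produced by that lemma on $\mathcal{B}_R^+\times[-1,1]$, satisfying the analogues of \eqref{preCOMP0}--\eqref{0.3bis} with the thresholds $-1/2$, $0$, $+1/2$ in place of $a-1/2$, $a$, $a+1/2$. Then I would simply set
\[
\mathcal{W}(x,x_{n+1},z) := \mathcal{W}_0(x,\, x_{n+1}-a,\, z)
\]
on $\mathcal{B}_R^+\times[a-1,a+1]$. Each of the four pointwise conditions \eqref{preCOMP0}, \eqref{preCOMP1.1}, \eqref{preCOMP1.2}, and \eqref{0.3bis} then follows by direct substitution, since they involve only the transformed argument $x_{n+1}-a$ measured against the fixed thresholds $\pm\frac12$, $\pm 1$, and $0$.

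The $a$-independence of the energy constant $C(\mathcal{W})$ is the quantitative claim and reduces to a translation-invariance observation. The weight $z^\alpha$ depends only on $z$; the reference ball $\mathcal{B}_R^+$ is defined in the $(x,z)$ coordinates with $x\in\R^n$ and does not involve $x_{n+1}$; and the gradient commutes with translations in $x_{n+1}$. Hence, under the change of variable $y:=x_{n+1}-a$,
\[
\int_{\mathcal{B}_R^+\times[a-1,a+1]} z^\alpha |\nabla\mathcal{W}|^2\,dx\,dx_{n+1}\,dz \;=\; \int_{\mathcal{B}_R^+\times[-1,1]} z^\alpha |\nabla\mathcal{W}_0|^2\,dx\,dy\,dz,
\]
a quantity determined by $R$, $\alpha$, and $W$, but manifestly not by $a$.

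The substantive work, which is inherited from \cite{CRS} and not redone here, is the construction of $\mathcal{W}_0$ itself: the prescribed trace in \eqref{0.3bis} is sharply discontinuous across $\{y=0\}$ on $\{z=0\}$, so one must interpolate between the smooth bulk piece $\eta(y)\,W(x,z)$ (with $\eta$ a standard cutoff equal to $0$ on $\{t\le -1/2\}$ and $1$ on $\{t\ge 1/2\}$) and a correction term, localized near the jump, whose own $z^\alpha$-weighted Dirichlet energy is finite. This rests on trace theory for Muckenhoupt $A_2$-weighted Sobolev spaces together with the fact that $W(\cdot,0)$ is bounded and vanishes near $\partial\mathcal{B}_R$, allowing one to build the correction in the two variables $(y,z)$ alone and then multiply by a cutoff in $x$. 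The main obstacle, namely the finiteness of the weighted energy across the interfacial jump, is therefore the one already resolved in \cite{CRS}; the present lemma contributes only the observation that the construction commutes with translations of $a$ in the extra coordinate.
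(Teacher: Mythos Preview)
Your proposal is correct and matches the paper's own treatment exactly: the paper does not give a proof of this lemma but simply states that it is ``just the translation of such result by some $a$ in the $(n+1)$th component'' of Lemma~10.2 in \cite{CRS}, which is precisely what you do.
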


{F}rom the geometric point of view, Lemma~\ref{10.2}
states that one can interpolate $0$ with a given function~$W$
by performing a sharp switch at~$\{x_{n+1}=a\}\cup \{z=0\}$, maintaining
the energy finite. As a consequence, we obtain:

\begin{corollary}\label{Co Z}
Fix $R$, $a>0$. Let $\alpha\in(-1,1)$.
Let ${\mathcal{U}}$ and ${\mathcal{V}}$ be bounded functions in ${\mathcal{B}}_R^+\subset\R^{n+1}$
with ${\mathcal{U}}={\mathcal{V}}$ in a neighborhood of $\partial{\mathcal{B}}_R$ and
$$ \int_{{\mathcal{B}}_R^+} z^\alpha \Big(|\nabla {\mathcal{U}}|^2+|\nabla {\mathcal{V}}|^2 \Big) dX < \infty,$$
where $X=(x,z)\in\R^{n+1}$.
Then there exists a function ${\mathcal{Z}}={\mathcal{Z}}(x,x_{n+1},z)$
defined on ${\mathcal{B}}_R^+ \times [-(a+1),a+1]$
with the following properties:
\begin{eqnarray}
&& {\mbox{${\mathcal{Z}}$ is even in $x_{n+1}$,}} \label{COMP0} \\
&& {\mathcal{Z}}={\mathcal{V}} {\mbox{ if }} |x_{n+1}| < a-\frac12, 
\label{COMP1} \\
&& {\mathcal{Z}}={\mathcal{U}} {\mbox{ in a neighborhood of }}
\partial{\mathcal{B}}_R^+\times [-(a+1),a+1], \label{COMP2}\\
&&  {\mathcal{Z}}(x,x_{n+1},0)=\left\{
\begin{matrix}
 {\mathcal{V}}(x,0) & {\mbox{ if }} |x_{n+1}|\in [0,a],\\
 {\mathcal{U}}(x,0) & {\mbox{ if }} |x_{n+1}|\in (a,a+1],\label{0.6bis}
\end{matrix}\right.
\end{eqnarray}
with
\begin{equation}\label{88711}
C({\mathcal{Z}}):=\int_{{\mathcal{B}}_R^+ \times [a-1,a+1]} z^\alpha |\nabla {\mathcal{Z}}|^2 d{\mathcal{X}}
{\mbox{ finite and independent of $a$,}}
\end{equation}
where ${\mathcal{X}}=(x,x_{n+1},z)\in\R^{n+2}$.
\end{corollary}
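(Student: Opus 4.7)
The strategy is to reduce Corollary~\ref{Co Z} directly to Lemma~\ref{10.2} by applying it to the difference. Concretely, I would set $W := \mathcal{U} - \mathcal{V}$: by hypothesis $W$ is bounded, vanishes in a neighborhood of $\partial \mathcal{B}_R$, and has $\int_{\mathcal{B}_R^+} z^\alpha |\nabla W|^2 \,dX < \infty$ (by the parallelogram inequality applied to $\nabla \mathcal{U}$ and $\nabla \mathcal{V}$). Lemma~\ref{10.2} then produces a function $\mathcal{W}(x,x_{n+1},z)$ on $\mathcal{B}_R^+ \times [a-1,a+1]$, which I trivially extend by $0$ to $x_{n+1} \in [-(a+1),a-1)$ (consistent with \eqref{preCOMP0}, which already forces $\mathcal{W}=0$ near $x_{n+1}=a-1$); call this extension $\widetilde{\mathcal{W}}$.

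I would then define
$$
\mathcal{Z}(x,x_{n+1},z) \;:=\; \mathcal{V}(x,z) \,+\, \widetilde{\mathcal{W}}(x,|x_{n+1}|,z)
$$
on $\mathcal{B}_R^+ \times [-(a+1),a+1]$. Property \eqref{COMP0} is immediate from the $|x_{n+1}|$ in the definition. For \eqref{COMP1}, if $|x_{n+1}| < a-\tfrac12$, property \eqref{preCOMP0} gives $\widetilde{\mathcal{W}}(x,|x_{n+1}|,z) = 0$, so $\mathcal{Z}=\mathcal{V}$. For \eqref{COMP2}, near $\partial \mathcal{B}_R^+ \times [-(a+1),a+1]$, property \eqref{preCOMP1.2} gives $\widetilde{\mathcal{W}}=0$, so $\mathcal{Z}=\mathcal{V}$; but the hypothesis $\mathcal{U}=\mathcal{V}$ near $\partial \mathcal{B}_R$ upgrades this to $\mathcal{Z}=\mathcal{U}$. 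On the trace $\{z=0\}$, formula \eqref{0.3bis} splits into two cases: if $|x_{n+1}| \le a$, then $\widetilde{\mathcal{W}}(x,|x_{n+1}|,0) = 0$ and $\mathcal{Z}(x,x_{n+1},0) = \mathcal{V}(x,0)$; if $|x_{n+1}| \in (a,a+1]$, then $\widetilde{\mathcal{W}}(x,|x_{n+1}|,0) = W(x,0) = \mathcal{U}(x,0) - \mathcal{V}(x,0)$, giving $\mathcal{Z}(x,x_{n+1},0) = \mathcal{U}(x,0)$, which is exactly \eqref{0.6bis}.

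For the energy estimate \eqref{88711}, I would use the parallelogram inequality $|\nabla \mathcal{Z}|^2 \le 2|\nabla \mathcal{V}|^2 + 2|\nabla(\widetilde{\mathcal{W}} \circ (\mathrm{id},|\cdot|,\mathrm{id}))|^2$. The composition with the reflection $x_{n+1} \mapsto |x_{n+1}|$ preserves the pointwise norm of the gradient a.e.\ (the sign of $\partial_{x_{n+1}}\widetilde{\mathcal{W}}$ flips on $\{x_{n+1}<0\}$, but its modulus is unchanged). Hence the $\widetilde{\mathcal{W}}$-contribution, after folding $[a-1,a+1]$ onto $[\max(0,a-1),a+1] \subseteq [a-1,a+1]$ (where $\widetilde{\mathcal{W}}$ is genuinely supported), is bounded by a fixed multiple of $C(\mathcal{W})$, which is finite and independent of $a$ by Lemma~\ref{10.2}. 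The $\mathcal{V}$-contribution on the slab of $x_{n+1}$-width $2$ is bounded by $4\int_{\mathcal{B}_R^+} z^\alpha|\nabla \mathcal{V}|^2\,dX$, also finite and independent of $a$. Summing yields \eqref{88711}.

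The main (mild) technical point will be tracking the reflection when $a \le \tfrac12$: here the set $\{|x_{n+1}| < a-\tfrac12\}$ is empty and the composition $\widetilde{\mathcal{W}}(x,|x_{n+1}|,z)$ is only Lipschitz (not $C^1$) across $\{x_{n+1}=0\}$, but this is harmless since only $L^2$-integrability of $|\nabla \mathcal{Z}|^2$ against $z^\alpha$ is required, and the a.e.-defined weak gradient has the same pointwise norm as $|\nabla \widetilde{\mathcal{W}}|$ evaluated at $(x,|x_{n+1}|,z)$. All remaining checks reduce to direct bookkeeping on the regions identified in \eqref{preCOMP0}--\eqref{0.3bis}.
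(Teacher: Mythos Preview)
Your proposal is correct and follows essentially the same approach as the paper: apply Lemma~\ref{10.2} to $W:=\mathcal{U}-\mathcal{V}$, extend the resulting $\mathcal{W}$ by zero, reflect via $x_{n+1}\mapsto|x_{n+1}|$, and add $\mathcal{V}$ back. Your write-up is in fact slightly more explicit than the paper's, both in invoking the hypothesis $\mathcal{U}=\mathcal{V}$ near $\partial\mathcal{B}_R$ to obtain \eqref{COMP2} and in verifying the energy bound \eqref{88711}.
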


\begin{proof}
Let ${\mathcal{W}}$ be the function obtained
by applying Lemma \ref{10.2} to the function $W:={\mathcal{U}}-{\mathcal{V}}$, 
and let
$\tilde{\mathcal{W}}(x,x_{n+1},z):={\mathcal{W}}(x,|x_{n+1}|,z)$.
Then let 
$${\mathcal{Z}}(x,x_{n+1},z):=\left\{ \begin{matrix} \tilde{\mathcal{W}}(x,x_{n+1},z)+{\mathcal{V}}(x,z) &{\mbox{ if }}|x_{n+1}|\in(a-1,a+1],\\
{\mathcal{V}}(x,z) &{\mbox{ if }}|x_{n+1}|\in[0,a-1].
\end{matrix}\right. $$
We remark that \eqref{COMP0} holds true by construction, 
while \eqref{COMP1} and \eqref{COMP2} follow 
from \eqref{preCOMP0} and \eqref{preCOMP1.2} respectively.
Also, \eqref{0.6bis} is a consequence of~\eqref{0.3bis}.
\end{proof}

With this, we are ready for the proof of Theorem~\ref{DR}:

\begin{proof}[Proof of Theorem~\ref{DR}] If $(u,E)$ is a minimizer in $\R^n$, then 
$(u^\star,E^\star)$ is a minimizer in $\R^{n+1}$: this follows easily from
Proposition \ref{char} by slicing, using that for a function $\overline v(x,x_{n+1},z)$
one has that $|\nabla_{\mathcal{X}} \overline v|^2\ge
|\nabla_X \overline v|^2$ for any fixed $x_{n+1}$, where
${\mathcal{X}}:=(x,x_{n+1},z)$ and
$X:=(x,z)$.

Now we suppose that $(u^\star,E^\star)$
is minimizing in $\R^{n+1}$ and we show that $(u,E)$
is minimizing in any domain of $\R^n$. To this extent,
we use again Proposition \ref{char}. For this, we fix a competitor triplet
$V$, $\overline v$ and $\Omega:= {\mathcal{B}}_R\subset \R^{n+1}$
as prescribed by Proposition \ref{char}
(in particular, we also have a set~$F$ given in 
item~ii) there), and our goal is to show that
\eqref{1983} holds true in this case.
The idea is to construct a competitor in one dimension more
with respect to $(u^\star,E^\star)$ and
thus to use the minimality of $(u^\star,E^\star)$
for this competitor.
The details of the computation go as follows. Fix $a>0$, to be taken
arbitrarily large at the end of the argument. We take
${\mathcal{Z}}_s$ to be the function constructed
in Corollary \ref{Co Z}, applied here with $\alpha:=1-2s$,
${\mathcal{U}}:=\overline u$ and ${\mathcal{V}}:=\overline v$.
By \eqref{COMP1},
\begin{eqnarray*}
\int_{{\mathcal{B}}_R^+ \times [-(a-1),a-1]} z^{1-2s} |\nabla_{\mathcal{X}} {\mathcal{Z}}_s|^2 d{\mathcal{X}}
&=&
\int_{{\mathcal{B}}_R^+ \times [-(a-1),a-1]} z^{1-2s} |\nabla_X \overline v|^2 d{\mathcal{X}}\\
&=& 2(a-1)
\int_{{\mathcal{B}}_R^+} z^{1-2s} |\nabla_X \overline v|^2 dX, 
\end{eqnarray*}
since $\overline v$ does not depend on $x_{n+1}$.
Therefore, by \eqref{COMP0},
\begin{equation}\begin{split}\label{77-1}
&\int_{{\mathcal{B}}_R^+ \times [-(a+1),a+1]} z^{1-2s} |\nabla {\mathcal{Z}}_s|^2 d{\mathcal{X}}\\
&\qquad =
2\int_{{\mathcal{B}}_R^+ \times [a-1,a+1]} z^{1-2s} |\nabla {\mathcal{Z}}_s|^2 d{\mathcal{X}}
+
2(a-1)
\int_{{\mathcal{B}}_R^+} z^{1-2s} |\nabla \overline v|^2 dX.
\end{split}\end{equation}
Similarly, one can define ${\mathcal{Z}}_\sigma$
to be the function constructed
in Corollary \ref{Co Z}, applied here with $\alpha:=1-\sigma$,
${\mathcal{U}}:=U$ and ${\mathcal{V}}:=V$.
In analogy with \eqref{77-1}, we obtain
\begin{equation}\begin{split}\label{77-2}
&\int_{{\mathcal{B}}_R^+ \times [-(a+1),a+1]} z^{1-\sigma} |\nabla {\mathcal{Z}}_\sigma|^2 d{\mathcal{X}}
\\&\qquad =
2\int_{{\mathcal{B}}_R^+ \times [a-1,a+1]} z^{1-\sigma} |\nabla {\mathcal{Z}}_\sigma|^2 d{\mathcal{X}}
+
2(a-1)
\int_{{\mathcal{B}}_R^+} z^{1-\sigma} |\nabla V|^2 dX.
\end{split}\end{equation}
Now we point out that 
\begin{equation}\begin{split}\label{77877}
&{\mbox{${\mathcal{Z}}_\sigma$, ${\mathcal{Z}}_s$ and 
${\mathcal{B}}_R \times [-(a+1),a+1]\subset \R^{n+2}$
are an admissible triplet}}\\
& {\mbox{(with respect to $(u^\star, E^\star)$,
as prescribed by Proposition \ref{char}).}}\end{split}\end{equation}
For this, we observe that
${\mathcal{Z}}_\sigma=U$
and
${\mathcal{Z}}_s = \overline u$ on $\partial\big(
{\mathcal{B}}_R \times [-(a+1),a+1]\big)$, thanks to \eqref{COMP2}
(the first of these observations takes care of item i) in the statement of Proposition \ref{char},
while the second is involved in item iii)).

Furthermore, by \eqref{0.6bis}, we see that
${\mathcal{Z}}_\sigma\big|_{\{ z=0\}}= \chi_{\tilde F}-\chi_{\tilde F^c}$,
where
$$ \tilde F := \Big( F\cap \{x_{n+1}\le a\}\Big)\cup
\Big( E\cap \{x_{n+1}> a\}\Big),$$
and
$${\mathcal{Z}}_s\big|_{\{ z=0\}}= \left\{
\begin{matrix}
\overline v\big|_{\{ z=0\}} & {\mbox{ if }} x_{n+1}\le a,\\
\overline u\big|_{\{ z=0\}} & {\mbox{ if }} x_{n+1}> a.
\end{matrix}\right.$$
Accordingly, ${\mathcal{Z}}_s\big|_{\{ z=0\}} \ge 0$ a.e. in~$\tilde F$
and ${\mathcal{Z}}_s\big|_{\{ z=0\}} \le 0$ a.e. in~$\tilde F^c$.
This proves \eqref{77877}.

Using \eqref{77877} and the minimality of $(u^\star,E^\star)$,
we deduce from Proposition \ref{char} that
\begin{eqnarray*}
&& \int_{{\mathcal{B}}_R^+ \times [-(a+1),a+1]} z^{1-2s} |\nabla \overline u^\star|^2 d{\mathcal{X}}
+c_{n,s,\sigma}
\int_{{\mathcal{B}}_R^+ \times [-(a+1),a+1]} z^{1-\sigma} |\nabla U^\star|^2 d{\mathcal{X}}\\
&&\qquad\le
\int_{{\mathcal{B}}_R^+ \times [-(a+1),a+1]} z^{1-2s} |\nabla {\mathcal{Z}}_s|^2 d{\mathcal{X}}
+c_{n,s,\sigma}\int_{{\mathcal{B}}_R^+ \times [-(a+1),a+1]} z^{1-\sigma} |\nabla {\mathcal{Z}}_\sigma|^2 d{\mathcal{X}}
,\end{eqnarray*}
where $\overline u^\star(x,x_{n+1},z):=\overline u^\star(x,z)$ and
$U(x,x_{n+1},z):=U(x,z)$. Thus, we can compute the integrals on the left hand side
in the $(n+1)$th variable and use \eqref{77-1} and \eqref{77-2}: we obtain
\begin{eqnarray*}
&& 2(a+1)\int_{{\mathcal{B}}_R^+ } z^{1-2s} |\nabla \overline u^\star|^2 dX
+2(a+1) c_{n,s,\sigma}
\int_{{\mathcal{B}}_R^+ } z^{1-\sigma} |\nabla U^\star|^2 dX\\
&&\qquad\le
2(a-1)
\int_{{\mathcal{B}}_R^+} z^{1-2s} |\nabla \overline v|^2 dX
+
2(a-1)
c_{n,s,\sigma}\int_{{\mathcal{B}}_R^+} z^{1-\sigma} |\nabla V|^2 dX+O(1),\end{eqnarray*}
where $O(1)$ is a quantity independent on $a$ (recall \eqref{88711}).
Hence, we divide by $2a$ and we take $a$ as large as we wish: we conclude that
\begin{eqnarray*}
&& \int_{{\mathcal{B}}_R^+ } z^{1-2s} |\nabla \overline u^\star|^2 dX
+ c_{n,s,\sigma}
\int_{{\mathcal{B}}_R^+ } z^{1-\sigma} |\nabla U^\star|^2 dX\\
&&\qquad\le
\int_{{\mathcal{B}}_R^+} z^{1-2s} |\nabla \overline v|^2 dX
+
c_{n,s,\sigma}\int_{{\mathcal{B}}_R^+} z^{1-\sigma} |\nabla V|^2 dX.\end{eqnarray*}
By Proposition \ref{char}, this says that $(u,E)$ is a minimal pair in $\R^n$, as desired.
\end{proof}

\section{Some glueing lemmata}\label{GLUE}

Here we present some results that glue two admissible pairs together
by estimating the excess of energy produced by this surgery.

\begin{lemma}\label{lemma:phi}
Let~$\beta\in(0,1)$ and~$\eps\in(0,1/4).$
Then there exists a function
$$ \phi:=\phi_\eps:\overline{\R^{n+1}_+}
\rightarrow [0,1]$$ such that
\begin{eqnarray}
\label{o-01}&& \phi(x,z)=0 \ {\mbox{ for any }} (x,z)\in{\mathcal{B}}^+_{1-\eps},\\
\label{o-02}&& \phi(x,z)=1 \ {\mbox{ for any }} (x,z)\in\R^{n+1}_+\setminus{\mathcal{B}}^+_{1+\eps},\\
\label{o-03}&& \phi(x,0)=\chi_{\R^n\setminus B_1}(x) \ {\mbox{ a.e. }} x\in \R^n,\\
{\mbox{and }}
\label{o-04}
&&\int_{\R^{n+1}_+} z^\beta |\nabla\phi(x,z)|^2\,dx\,dz\le \frac{C}{\eps},\end{eqnarray}
for some $C>0$.
\end{lemma}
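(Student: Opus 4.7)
The plan is to give an explicit formula for $\phi$ that encodes the jump discontinuity of the prescribed trace through a singular quotient. Fix a smooth nondecreasing cutoff $\chi\in C^\infty(\R;[0,1])$ with $\chi\equiv 0$ on $(-\infty,-2]$, $\chi\equiv 1$ on $[2,\infty)$, and $|\chi'|\le 1$, and define
\begin{equation*}
\phi(X):=\begin{cases} 0 & \text{if }|X|\le 1-\eps,\\ \chi\!\left(\frac{4(|X|-1)}{\eps z}\right) & \text{if }1-\eps<|X|<1+\eps,\\ 1 & \text{if }|X|\ge 1+\eps.\end{cases}
\end{equation*}
Conditions \eqref{o-01} and \eqref{o-02} hold by construction; on the trace $\{z=0\}$ inside the shell the argument of $\chi$ tends to $+\infty$ when $|X|>1$ and to $-\infty$ when $|X|<1$, yielding the characteristic-function trace \eqref{o-03}.

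The first step is to verify continuity of $\phi$ across $\partial\mathcal B^+_{1\pm\eps}$. On the inner sphere $|X|=1-\eps$ the constraint $z\le |X|=1-\eps$ gives the argument $-4/z\le -4/(1-\eps)\le -16/3<-2$ for $\eps\in(0,1/4)$, so $\chi=0$ and the interior formula matches the exterior value. Symmetrically, on $|X|=1+\eps$ one has $z\le 1+\eps\le 5/4$, so $4/z\ge 16/5>2$ and $\chi=1$. This is the small technical point of the construction and is what dictates the numerical constants $4$ and $\pm 2$.

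For the energy estimate \eqref{o-04}, put $F(X):=4(|X|-1)/(\eps z)$ so that $\phi=\chi(F)$ in the shell. A direct differentiation, together with the observation that $\chi'(F)$ vanishes outside the wedge $\mathcal W:=\{\bigl||X|-1\bigr|\le\eps z/2\}$, yields $|\nabla\phi|^2\le C/(\eps z)^2$ on the support of $\chi'(F)$. Passing to the coordinates $(s,z,\omega)$ with $s:=|X|-1$ and $\omega:=x/|x|\in S^{n-1}$, whose Jacobian is uniformly bounded near the unit sphere, we estimate
\begin{equation*}
\int_{\R^{n+1}_+} z^\beta|\nabla\phi|^2\,dX \;\le\; \frac{C\,|S^{n-1}|}{\eps^2}\int_0^{1+\eps}\!\!\int_{|s|\le\eps z/2}\! z^{\beta-2}\,ds\,dz \;=\; \frac{C\,|S^{n-1}|}{\eps}\int_0^{1+\eps}\! z^{\beta-1}\,dz,
\end{equation*}
and the final integral is bounded independently of $\eps$ precisely because $\beta\in(0,1)$, giving the claimed $C/\eps$.

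The main obstacle is the delicate interplay between the sharp jump of the trace on the singular set $\Gamma:=\{|x|=1,\,z=0\}$ and the regularity required off $\{z=0\}$; the singular quotient $(|X|-1)/(\eps z)$ resolves it by concentrating all the variation of $\phi$ inside $\mathcal W$, where the weight $z^\beta$ with $\beta>0$ makes the gradient-squared integrable with exactly the right scaling. Once the explicit formula is written down, the verification amounts to the elementary calculus steps indicated above.
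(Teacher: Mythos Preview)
Your construction is correct and is a cleaner alternative to the paper's. The paper builds $\phi$ as a blend $\eta\phi_1+(1-\eta)\phi_2$ between a radial cutoff $\phi_1$ in $|X|$ (right support, wrong trace) and a cone-type function $\phi_2(x,z)=\big(|x|-1+z\big)/(2z)$ (right trace, wrong support), with $\eta$ a cutoff in the $z$-variable; the energy estimate then requires tracking three separate contributions. Your single formula $\chi\big(4(|X|-1)/(\eps z)\big)$ achieves both requirements at once by letting the argument blow up on $\{z=0\}$ with the correct sign, and the energy bound reduces to the observation that $|\nabla\phi|=O((\eps z)^{-1})$ on a wedge of radial width $O(\eps z)$. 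This is the same mechanism that makes the paper's $\phi_2$ work, but you have packaged it without the interpolation layer.

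One point to tighten: your change of variables $(x,z)\mapsto(s,z,\omega)$ has Jacobian $|x|^{n-2}(1+s)$, which is bounded on the wedge $\mathcal W$ only when $n\ge 2$. For $n=1$ it blows up as $|x|\to 0$, and this does occur in $\mathcal W$ when $z$ is near $1$ (there $|X|\approx 1$ forces $|x|$ small). The displayed inequality is therefore not justified as written in that case. The fix is immediate: use polar coordinates $X=r\Theta$ in $\R^{n+1}_+$ with $\Theta\in S^n_+$ and $z=r\theta_{n+1}$, so that $\mathcal W=\{|r-1|\le \eps r\theta_{n+1}/2\}$ has radial width $\sim\eps\theta_{n+1}$ at each $\Theta$; then
\[
\int_{\mathcal W} z^{\beta-2}\,dX \;\lesssim\; \eps\int_{S^n_+}\theta_{n+1}^{\beta-1}\,d\Theta,
\]
which is finite precisely because $\beta>0$, and the bound $C/\eps$ follows in every dimension.
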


\begin{proof} Let~$\eps':=\eps/2$ and, for any~$X=(x,z)\in\R^{n+1}_+$, we define
$$ \phi_1(X):=\left\{
\begin{matrix}
0 & {\mbox{ if $|X|<1-\eps'$,}}\\
\big( |X|-1+\eps'\big)/(2\eps') & {\mbox{ if $|X|\in [1-\eps',\, 1+\eps')$,}}\\
1 & {\mbox{ if $|X|\ge 1+\eps'$.}}
\end{matrix}
\right. $$
Let also
$$ \phi_2(X):=\left\{
\begin{matrix}
0 & {\mbox{ if $|x|<1-z$,}}\\
\big( |x|-1+z\big)/(2z) & {\mbox{ if $|x|\in [1-z,\,1+z)$,}}\\
1 & {\mbox{ if $|x|\geq 1+z$}}
\end{matrix}
\right. $$
and
$$ \eta(X):=\left\{
\begin{matrix}
z/\eps' & {\mbox{ if $z\in (0,\eps')$,}}
\\1 & {\mbox{ if $z\ge\eps'$.}}
\end{matrix}
\right. $$
We also set
$$ \phi:= \eta\phi_1 +(1-\eta)\phi_2.$$
We remark that~$\eta\big|_{\{z=0\}}=0$, thus
$$ \phi\big|_{\{z=0\}} =\phi_2\big|_{\{z=0\}}=\chi_{\R^n\setminus B_1},$$
which proves \eqref{o-03}.

Now we prove \eqref{o-01}.
For this, we fix~$X\in\R^{n+1}_+$, with~$|X|<1-\eps=1-2\eps'$.
Then~$\phi_1(X)=0$, hence
\begin{equation}\label{tydue2if}
\phi(X)=(1-\eta(X))\phi_2(X).\end{equation}
Now, if~$|x|<1-z$, we have that~$\phi_2(X)=0$, and therefore~$\phi(X)=0$,
that proves~\eqref{o-01}
in this case. Accordingly, we may suppose that~$
|x|\ge 1-z$. So we have that
$$ z\ge \frac{1-(1-z)^2-z^2}{2}\ge \frac{1-|x|^2-z^2}{2}
=\frac{1-|X|^2}{2}>\frac{1-(1-2\varepsilon')^2}{2}>\varepsilon',$$
and so~$\eta(X)=1$. As a consequence of this
and~\eqref{tydue2if}, we obtain that~$\phi(X)=0$, and this
establishes~\eqref{o-01}.

Now we prove \eqref{o-02}.
To this goal, we fix~$X\in\R^{n+1}_+$ with~$|X|\ge 1+\eps=1+2\eps'$.
In this case, we have that
\begin{equation}\label{tydue2if-2}
\phi_1(X)=1.\end{equation}
Now, if~$z\ge\eps'$, we have that~$\eta(X)=1$ and so
$$ \phi(X)=\phi_1(X)=1.$$
Thus, we can assume that~$z<\eps'$. In this case, we have that~$|x|^2 =
|X|^2 -z^2>(1+z)^2$, which
implies that~$\phi_2(X)=1$. 
Combining this and~\eqref{tydue2if-2} we conclude that~$\phi(X)= 
\eta(X)+(1-\eta(X))=1$, which proves~\eqref{o-02}.

Now we prove \eqref{o-04}.
For this, we first observe that
$$ \nabla\phi_2(X)=\left( \frac{x}{2z\,|x|},\,
\frac{1-|x|}{2z^2}
\right)\,\chi_{(1-z,\,1+z)}(|x|)$$
and therefore
\begin{equation}\label{6df7g8fff6} |\nabla\phi_2(X)|\le \frac{C}{z}
\,\chi_{(1-z,\,1+z)}(|x|),\end{equation}
for some~$C>0$.
Moreover
$$ |\nabla\phi_1(X)|\le \frac{C}{\eps}\,\chi_{(1-\eps',\, 1+\eps')} (|X|).$$
As a consequence
\begin{equation}\label{conto-1-1-0}
\int_{\R^{n+1}_+} z^\beta |\nabla\phi_1|^2\,dX
\le \frac{C\,\Big| \big\{|X|\in (1-\eps',\, 1+\eps')
\big\}\Big|}{\eps^2} =\frac{C}{\eps},\end{equation}
up to renaming~$C>0$.
Also, $\phi=\phi_1$ if~$z>\eps'$, therefore we deduce from~\eqref{conto-1-1-0}
that
\begin{equation}\label{conto-1-1-1}
\int_{\{z>\eps'\}} z^\beta |\nabla\phi|^2\,dX
=\int_{\{z>\eps'\}} z^\beta |\nabla\phi_1|^2\,dX
\le \frac{C}{\eps}.\end{equation}
Furthermore, if~$z\le \eps'$ and~$|X|>2$, we have that~$\phi_1(X)=1$
and~$|x|^2 =|X|^2 -z^2>(1+z)^2$,
that gives~$\phi_2(X)=1$. 

As a consequence, $\phi_1-\phi_2=0$ if~$z\le\eps'$ and~$|X|>2$, therefore
\begin{equation}\label{conto-1-1-2}
\begin{split}
& \int_{\{z\le \eps'\}} z^\beta |\nabla\eta|^2\,|\phi_1-\phi_2|^2\,dX
\le \frac{C}{\eps^2}\int_{\{z\le \eps'\}\cap\{|X|\le2\}} z^\beta \,dX
\\ &\qquad\le \frac{C}{\eps^2}\int_{\{z\le \eps'\}} z^\beta \,dz
= C\eps^{\beta-1}.
\end{split}
\end{equation}
In addition, using~\eqref{6df7g8fff6}, we obtain that
\begin{equation}\label{conto-1-1-3}\begin{split}
&\int_{\{z\le \eps'\}} z^\beta |\nabla\phi_2|^2\,dX\le
C\int_{\{z\le \eps'\}\cap \{|x|\in (1-z,\,1+z)\}} z^{\beta-2} \,dX\\
&\qquad\le
C\int_{\{z\le \eps'\}} z^{\beta-1} \,dz =C\eps^\beta.
\end{split}\end{equation}
Notice also that
$$ \nabla\phi=\nabla\eta(\phi_1-\phi_2)+\eta\nabla\phi_1+(1-\eta)
\nabla\phi_2.$$
Consequently, by gathering the estimates in~\eqref{conto-1-1-0},
\eqref{conto-1-1-2} and~\eqref{conto-1-1-3}
and using Young inequality, we deduce that
\begin{eqnarray*}&&\int_{\{z\le \eps'\}} z^\beta|\nabla \phi|^2\,dX
\\&\le& C\int_{\{z\le \eps'\}} z^\beta
\Big( 
|\nabla\eta|^2\,|\phi_1-\phi_2|^2+\eta^2|\nabla\phi_1|^2+(1-\eta)^2
|\nabla\phi_2|^2
\Big)\,dX\\&\le& \frac{C}{\eps},\end{eqnarray*}
up to renaming~$C$. This and~\eqref{conto-1-1-1} imply~\eqref{o-04}.
\end{proof}

Next we give a glueing result: namely, given any admissible pair in~$B_1$,
we glue it to another admissible pair outside~$B_1$, keeping the
energy contribution under control.

\begin{lemma}\label{LE conv}
Let $\epsilon>0$. Let $(u_i,E_i)$, $i\in\{1,2\}$, be admissible pairs in $B_2$, 
and let $\overline u_i$ and $U_i$ be their extensions according to \eqref{ext} 
and \eqref{extE}. Let also~$\phi$ be the function
introduced in Lemma~\ref{lemma:phi}.

Then there exist $F\subseteq\R^n$, $\overline v:\R^{n+1}_+\rightarrow\R$ 
and $V:\R^{n+1}_+\rightarrow\R$ such that
\begin{equation}\label{A1}
F\cap B_1=E_1\cap B_1 \ {\mbox{ and }} \ F\setminus B_1 = E_2\setminus B_1, 
\end{equation}
and $\overline{v}$ and $V$ satisfy the properties listed in Proposition \ref{char}, 
namely
\begin{itemize}
\item[i)] $V=U_2$ in a neighborhood of $\partial\mathcal{B}^+_{3/2}$,
\item[ii)] the trace of $V$ on $\{z=0\}$ is $\chi_F-\chi_{F^c}$,
\item[iii)] $\overline{v}=\overline{u}_2$ in a neighborhood 
of $\partial\mathcal{B}^+_{3/2}$, and $\overline{v}|_{\{z=0\}}\ge0$ a.e. 
in $F$ and $\overline{v}|_{\{z=0\}}\le0$ a.e. in $F^c$. 
\end{itemize}
Also, 
\begin{equation}\begin{split}\label{A2}
& \int_{\mathcal{B}^+_{3/2}}z^{1-2s}\left(|\nabla\overline{v}|^2-
|\nabla\overline{u}_2|^2\right)\, dX \le 
\int_{\mathcal{B}^+_{1}}z^{1-2s}\left(|\nabla\overline{u}_1|^2-
|\nabla\overline{u}_2|^2\right)\, dX \\
& \qquad \qquad +C \epsilon^{-2}\int_{\mathcal{B}^+_{1+\epsilon}
\setminus\mathcal{B}^+_{1-\epsilon}} z^{1-2s}|\overline{u}_1-\overline{u}_2|^2 \, dX + 
C\,\int_{\mathcal{B}^+_{1+\epsilon}\setminus\mathcal{B}^+_{1-\epsilon}}
z^{1-2s}\left(|\nabla\overline{u}_1|^2+
|\nabla\overline{u}_2|^2\right)\, dX
\end{split}
\end{equation}
and 
\begin{equation}\begin{split}\label{AA2}
& \int_{\mathcal{B}^+_{3/2}}z^{1-\sigma}\left(|\nabla V|^2-
|\nabla U_2|^2\right)\, dX \le 
\int_{\mathcal{B}^+_{1}}z^{1-\sigma}\left(|\nabla U_1|^2-
|\nabla U_2|^2\right)\, dX \\
& \qquad \qquad +C \int_{\mathcal{B}^+_{1+\epsilon}\setminus\mathcal{B}^+_{1-\epsilon}} z^{1-\sigma} |\nabla\phi|^2 \,|U_1-U_2|^2 \, dX + 
C\,\int_{\mathcal{B}^+_{1+\epsilon}\setminus\mathcal{B}^+_{1-\epsilon}}
z^{1-\sigma}\left(|\nabla U_1|^2+|\nabla U_2|^2\right)\, dX
\end{split}
\end{equation}
for some $C>0$.
\end{lemma}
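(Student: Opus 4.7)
The natural strategy is to take
\[
F := (E_1 \cap B_1)\cup (E_2\setminus B_1),\qquad
\overline v := (1-\phi)\,\overline u_1 + \phi\,\overline u_2,\qquad
V := (1-\phi)\,U_1 + \phi\,U_2,
\]
using the cutoff $\phi$ supplied by Lemma~\ref{lemma:phi}. I would first verify \eqref{A1} together with conditions i)--iii). The definition of $F$ gives \eqref{A1} immediately. By \eqref{o-02}, $\phi\equiv 1$ outside $\mathcal{B}^+_{1+\epsilon}$, hence $\overline v$ and $V$ coincide with $\overline u_2$ and $U_2$ in a neighborhood of $\partial\mathcal{B}^+_{3/2}$, giving i) and the matching part of iii). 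Next, by \eqref{o-03} we have $\phi(\cdot,0)=\chi_{\R^n\setminus B_1}$, so the trace of $V$ on $\{z=0\}$ equals $\chi_{E_1}-\chi_{E_1^c}$ on $B_1$ and $\chi_{E_2}-\chi_{E_2^c}$ on $B_1^c$, which is exactly $\chi_F-\chi_{F^c}$, proving ii). Analogously, $\overline v|_{z=0}=u_1\chi_{B_1}+u_2\chi_{B_1^c}$; combined with the admissibility of each pair $(u_i,E_i)$ and with the very definition of $F$, this furnishes the sign condition in iii).

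For the energy inequalities I would differentiate the product and apply Young's inequality, obtaining pointwise
\[
|\nabla\overline v|^2 \le (1-\phi)|\nabla\overline u_1|^2 + \phi|\nabla\overline u_2|^2 + C\,|\overline u_1-\overline u_2|^2|\nabla\phi|^2,
\]
and the analogous bound for $|\nabla V|^2$ (the cross terms $2\phi(1-\phi)\nabla\overline u_1\cdot\nabla\overline u_2$ and $2(\overline u_2-\overline u_1)\,[(1-\phi)\nabla\overline u_1+\phi\nabla\overline u_2]\cdot\nabla\phi$ being absorbed into the three main contributions). Integrating with the weights $z^{1-2s}$ and $z^{1-\sigma}$ over $\mathcal{B}^+_{3/2}$ and splitting via \eqref{o-01}--\eqref{o-02} into the three regions $\mathcal{B}^+_{1-\epsilon}$ (where the interpolants reduce to $\overline u_1$ and $U_1$), the annulus $\mathcal{B}^+_{1+\epsilon}\setminus\mathcal{B}^+_{1-\epsilon}$, and $\mathcal{B}^+_{3/2}\setminus\mathcal{B}^+_{1+\epsilon}$ (where they reduce to $\overline u_2$ and $U_2$), one reassembles the inner region plus the part of the annulus inside $\mathcal{B}^+_1$ into the main term $\int_{\mathcal{B}^+_1} z^{1-\beta}\bigl(|\nabla\cdot_1|^2-|\nabla\cdot_2|^2\bigr)\,dX$. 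The leftover contributions on the annulus are then controlled by the natural $|\nabla\cdot_i|^2$ terms plus the interpolation error proportional to $|\nabla\phi|^2|\cdot_1-\cdot_2|^2$. For the $V$--estimate this yields \eqref{AA2} directly, since the statement keeps $|\nabla\phi|^2$ explicit.

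The delicate point is passing from $|\overline u_1-\overline u_2|^2\,|\nabla\phi|^2$ to the cleaner bound $C\epsilon^{-2}|\overline u_1-\overline u_2|^2$ appearing in \eqref{A2}. Indeed, by the construction $\phi=\eta\phi_1+(1-\eta)\phi_2$ of Lemma~\ref{lemma:phi}, the pointwise bound $|\nabla\phi|^2\le C\epsilon^{-2}$ only holds in $\{z\gtrsim\epsilon\}$, while close to $\{z=0\}$ and $|x|\approx 1$ the gradient of $\phi$ blows up like $z^{-1}$. This singularity is harmless for $V$ (whose trace is a jump function, compatible with~\eqref{AA2}), but for $\overline v$ one must exploit the additional regularity of the Poisson extension~\eqref{ext}: since $\overline u_i$ extends continuously up to $\{z=0\}$, a Hardy-type estimate on the narrow strip $\{z<\epsilon/2\}\cap(\mathcal{B}^+_{1+\epsilon}\setminus\mathcal{B}^+_{1-\epsilon})$ allows one to trade the $z^{-2}$ singularity in $|\nabla\phi|^2$ for the uniform $\epsilon^{-2}$ acting on $|\overline u_1-\overline u_2|^2$ itself. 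Reassembling this with the pointwise bound away from $\{z=0\}$ produces \eqref{A2}. This absorption of the boundary singularity of $\nabla\phi$ is the main technical obstacle of the argument.
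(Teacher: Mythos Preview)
Your construction of $V$ and the verification of \eqref{A1}, i), ii), and \eqref{AA2} coincide with the paper's. The gap is in the construction of $\overline v$. Using the same singular cutoff $\phi$ for $\overline v$ as for $V$ does not work: near $\{z=0,\ |x|=1\}$ one has $|\nabla\phi|^2\sim z^{-2}$, so the interpolation error is $\int z^{1-2s}\,z^{-2}\,|\overline u_1-\overline u_2|^2$ over a region of width $\sim z$ in $x$, i.e.\ essentially $\int_0^{\epsilon} z^{-2s}\,|u_1-u_2|^2\,dz$. Since $u_1\ne u_2$ on $\partial B_1$ in general, this diverges for $s\ge 1/2$; your $\overline v$ need not even have finite weighted Dirichlet energy. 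The Hardy-type estimate you invoke requires the function to vanish on $\{z=0\}$, and $\overline u_1-\overline u_2$ does not, so there is no way to ``trade'' the $z^{-2}$ for $\epsilon^{-2}$ here.

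The paper avoids this by using a \emph{different} interpolation for $\overline v$: two standard smooth radial cutoffs $\eta_1,\eta_2$ with $|\nabla\eta_i|\le C\epsilon^{-1}$, which give the $\epsilon^{-2}$ in \eqref{A2} immediately. The price is that the trace of $\overline v$ on $\{z=0\}$ is no longer the sharp splice $u_1\chi_{B_1}+u_2\chi_{B_1^c}$, so the sign condition in iii) is not automatic. This is where the real idea lies: one sets $\overline w_\pm:=\min\{\overline u_1^\pm,\overline u_2^\pm\}$ and defines
\[
\overline v_\pm:=\eta_1\eta_2\,\overline u_1^\pm+\eta_2(1-\eta_1)\,\overline w_\pm+(1-\eta_2)\,\overline u_2^\pm,\qquad \overline v:=\overline v_+-\overline v_-.
\]
If $x\in F$ then $x\in E_1\cup E_2$, so one of $\overline u_1^-(x,0),\overline u_2^-(x,0)$ vanishes, hence $\overline w_-(x,0)=0$; combined with $\eta_2(x,0)=1$ on $B_1$ and $\eta_1(x,0)=0$ on $B_1^c$, this forces $\overline v(x,0)\ge 0$ on $F$ (and symmetrically on $F^c$). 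The asymmetry between \eqref{A2} and \eqref{AA2} in the statement --- $\epsilon^{-2}$ versus the explicit $|\nabla\phi|^2$ --- is precisely the trace of these two different constructions.
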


\begin{proof}
We set
\begin{equation}\label{AA}
F:=(E_1\cap B_1)\cup (E_2\setminus B_1).
\end{equation}
With this we have established~\eqref{A1}.

Now, we define 
$$ \overline{w}_{\pm}:=\min\{\overline{u}_1^\pm, \overline{u}_2^\pm \}.$$
Let also $\eta_1,\eta_2\in C^\infty(\R^{n+1},[0,1])$, with
\begin{eqnarray*}
&& \eta_1(X)=1 \ {\mbox{ if }} \ |X|\le 1-\epsilon, \\
&& \eta_1(X)=0 \ {\mbox{ if }} \ |X|\ge 1-\epsilon/2, \\
&& \eta_2(X)=1 \ {\mbox{ if }} \ |X|\le 1+\epsilon/2, \\
&& \eta_2(X)=0 \ {\mbox{ if }} \ |X|\ge 1+\epsilon\\
{\mbox{and }}&& |\nabla\eta_1|+|\nabla\eta_2|\le \frac{8}{\eps}.
\end{eqnarray*}
We define\footnote{We put $\pm$ as a subscript (rather than a superscript) in~$\overline{v}_\pm$
and~$\overline{w}_\pm$ not to confuse in principle
the notation with the positive/negative part of
a function.}
$$ \overline{v}_\pm:= \eta_1\,\eta_2\,\overline{u}^\pm_1 +\eta_2(1-\eta_1)\,\overline{w}_\pm + (1-\eta_2)\,\overline{u}^\pm_2 $$
and $\overline{v}:=\overline{v}_+-\overline{v}_-$. 

By construction, $\overline{v}=\overline{u}_2$ near $\partial\mathcal{B}^+_{3/2}$. 

Also, if $x\in F$, then $x\in E_1\cup E_2$, and so either $\overline{u}_1(x,0)\ge0$ 
or $\overline{u}_2(x,0)\ge0$ (up to sets of zero measure), and then either 
$\overline{u}_1^-(x,0)=0$ or $\overline{u}_2^-(x,0)=0$, 
so $\overline{w}_-(x,0)=0$. This gives that for a.e. $x\in F$
$$ \overline{v}_-(x,0)=\eta_1(x,0)\,\eta_2(x,0)\,\overline{u}_1^-(x,0)+
(1-\eta_2(x,0))\,\overline{u}_2^-(x,0), $$
and so
\begin{equation}\label{5df67g8hww65}\begin{split}
&\overline{v}(x,0) \\=\,& \eta_1(x,0)\,\eta_2(x,0)\,\overline{u}_1(x,0)+
(1-\eta_2(x,0))\,\overline{u}_2(x,0)+\eta_2(x,0)(1-\eta_1(x,0))\,\overline{w}_+(x,0)\\
\ge\, &\eta_1(x,0)\,\eta_2(x,0)\,\overline{u}_1(x,0)+
(1-\eta_2(x,0))\,\overline{u}_2(x,0).
\end{split}\end{equation}
Now, for a.e. $x\in F\cap B_1=E_1\cap B_1$ we have that $\eta_2(x,0)=1$,
thus~\eqref{5df67g8hww65}
implies that
$$ \overline{v}(x,0)\ge \eta_1(x,0)\,\overline{u}_1(x,0)\ge 0.$$
Similarly, for a.e $x\in F\setminus B_1=E_2\setminus B_1$ we have that $\eta_1(x,0)=0$, 
thus~\eqref{5df67g8hww65}
gives that
$$ \overline{v}(x,0)\ge (1-\eta_2(x,0))\,\overline{u}_2(x,0)\ge 0.$$
This shows that, for a.e. $x\in F$, $\overline{v}(x,0)\ge0$. 

Conversely, if $x\in F^c$, then $x\in E_1^c\cup E_2^c$ 
and so either $\overline{u}_1(x,0)\le0$ or $\overline{u}_2(x,0)\le0$ 
(up to sets of zero measure), that is either $\overline{u}_1^+(x,0)=0$ 
or $\overline{u}_2^+(x,0)=0$, so $\overline{w}_+(x,0)=0$. 
As a consequence, for a.e. $x\in F^c$, 
$$ \overline{v}_+(x,0)= \eta_1(x,0)\,\eta_2(x,0)\, \overline{u}_1^+(x,0)
+(1-\eta_2(x,0))\,\overline{u}_2^+(x,0), $$
and so 
\begin{eqnarray*}
&&\overline{v}(x,0) \\&=& \eta_1(x,0)\,\eta_2(x,0)\, \overline{u}_1(x,0)
+(1-\eta_2(x,0))\,\overline{u}_2(x,0)-\eta_2(x,0)(1-\eta_1(x,0))\,\overline{w}_-(x,0)\\
&\le & \eta_1(x,0)\,\eta_2(x,0)\, \overline{u}_1(x,0)
+(1-\eta_2(x,0))\,\overline{u}_2(x,0).
\end{eqnarray*}
In particular, for a.e. $x\in F^c\cap B_1=E_1^c\cap B_1$, we have that 
$\eta_2(x,0)=1$, so $\overline{v}(x,0)\le\eta_1(x,0)\,\overline{u}_1(x,0)\le 0$, 
and for a.e. $x\in F\setminus B_1=E_2^c\setminus B_1$, we have that $\eta_1(x,0)=0$, 
so $\overline{v}(x,0)\le (1-\eta_2(x,0))\overline{u}_2(x,0)\le 0$. 
This shows that $\overline{v}(x,0)\le0$ for a.e. $x\in F^c$, 
thus completing the proof of iii). 

Now we prove \eqref{A2}. For this, we notice that
\begin{eqnarray*}
\nabla\overline{v}_\pm &=& \eta_1\,\eta_2\,\nabla\overline{u}_1^\pm 
+ \eta_2(1-\eta_1)\nabla\overline{w}_\pm +(1-\eta_2)\nabla\overline{u}_2^\pm \\
&&\qquad + \nabla\left(\eta_1\,\eta_2\right)\overline{u}_1^\pm 
+ \nabla \left(\eta_2(1-\eta_1)\right)\overline{w}_\pm 
+ \nabla (1-\eta_2)\overline{u}_2^\pm,
\end{eqnarray*}
so 
\begin{eqnarray*}
\nabla\overline{v} &=& \eta_1\,\eta_2\,\nabla\overline{u}_1 
+ \eta_2(1-\eta_1)\left(\nabla\overline{w}_+ -\nabla\overline{w}_-\right) 
+(1-\eta_2)\nabla\overline{u}_2 \\
&&\qquad + \nabla\left(\eta_1\,\eta_2\right)\overline{u}_1 
+ \nabla \left(\eta_2(1-\eta_1)\right)\left(\overline{w}_+-\overline{w}_-\right) 
+ \nabla (1-\eta_2)\overline{u}_2.
\end{eqnarray*}
Now we notice that 
\begin{eqnarray*}
&& \nabla\left(\eta_1\,\eta_2\right)\overline{u}_1^+ 
+ \nabla\left(\eta_2(1-\eta_1)\right)\overline{w}_+ 
+\nabla(1-\eta_2)\overline{u}_2^+\\
&=& \left(\eta_1\,\nabla\eta_2+\nabla\eta_1\,\eta_2\right)\overline{u}_1^+ 
+ \left((1-\eta_1)\nabla\eta_2-\nabla\eta_1\,\eta_2\right)\overline{w}_+
-\nabla\eta_2\,\overline{u}_2^+\\
&=& \left(\eta_1\,\overline{u}_1^+ +(1-\eta_1)\overline{w}_+-\overline{u}_2^+\right) 
\nabla\eta_2 + \left(\overline{u}_1^+ -\overline{w}_+\right)\eta_2\,\nabla\eta_1\\
&=& O\left(\epsilon^{-1}|\overline{u}_1^+-\overline{u}_2^+|\right)\chi_{\mathcal{B}_{
1+\epsilon}^+\setminus\mathcal{B}_{1-\epsilon}^+}\\
&=& O\left(\epsilon^{-1}|\overline{u}_1-\overline{u}_2|\right)\chi_{\mathcal{B}_{
1+\epsilon}^+\setminus\mathcal{B}_{1-\epsilon}^+},
\end{eqnarray*}
and similarly for the negative parts. Hence,
\begin{eqnarray*}
\nabla\overline{v} &=& \chi_{\mathcal{B}_1^+}\nabla\overline{u}_1 
+ \left(\eta_1\,\eta_2-\chi_{\mathcal{B}_1^+}\right)\nabla\overline{u}_1 
+(1-\eta_2)\nabla\overline{u}_2\\ 
&& \qquad +\eta_2(1-\eta_1)O\left(|\nabla\overline{u}_1| + |\nabla\overline{u}_2|\right) 
+ O\left(\epsilon^{-1}|\overline{u}_1-\overline{u}_2|\right)\chi_{\mathcal{B}_{
1+\epsilon}^+\setminus\mathcal{B}_{1-\epsilon}^+}.
\end{eqnarray*}
That is 
\begin{eqnarray*}
|\nabla\overline{v}|^2 &\le & \chi_{\mathcal{B}_1^+}|\nabla\overline{u}_1|^2 
+ (1-\eta_2)^2|\nabla\overline{u}_2|^2\\
&&\qquad + C\chi_{\mathcal{B}_{1+\epsilon}^+\setminus\mathcal{B}_{1-\epsilon}^+}
\left(|\nabla\overline{u}_1|^2 +|\nabla\overline{u}_2|^2\right)+C\chi_{\mathcal{B}_{
1+\epsilon}^+\setminus\mathcal{B}_{1-\epsilon}^+} \epsilon^{-2}|\overline{u}_1-\overline{u}_2|^2, 
\end{eqnarray*}
for some constant $C>0$. 
Since $\overline{v}=\overline{u}_2$ outside $\mathcal{B}_{1+\epsilon}^+$, 
we conclude that 
\begin{eqnarray*}
&& \int_{\mathcal{B}_{3/2}^+}z^{1-2s}
\left( |\nabla\overline{v}|^2-|\nabla\overline{u}_2|^2\right)\,dX \\
&&\quad =\int_{\mathcal{B}_{1+\epsilon}^+}z^{1-2s}
\left( |\nabla\overline{v}|^2-|\nabla\overline{u}_2|^2\right)\,dX \\
&&\quad \le \int_{\mathcal{B}_{1}^+}z^{1-2s}
\left( |\nabla\overline{u}_1|^2-|\nabla\overline{u}_2|^2\right)\,dX 
+ C\int_{\mathcal{B}_{1+\epsilon}^+\setminus\mathcal{B}_{1-\epsilon}^+}z^{1-2s}
\left( |\nabla\overline{u}_1|^2+|\nabla\overline{u}_2|^2\right)\,dX \\
&& \qquad \qquad +C\epsilon^{-2}\int_{\mathcal{B}_{
1+\epsilon}^+\setminus\mathcal{B}_{1-\epsilon}^+}z^{1-2s}
|\overline{u}_1-\overline{u}_2|^2\,dX,
\end{eqnarray*}
that concludes the proof of \eqref{A2}. 

Now, let~$\phi$ be as in Lemma~\ref{lemma:phi}, and set $\tilde{\chi}_E:=\chi_E-\chi_{E^c}$. 
We define~$V:=(1-\phi)U_1+\phi U_2$.
We observe that, for a.e.~$x\in\R^n$, $\phi(x,0)=\chi_{\R^n\setminus B_1}$
and therefore
\begin{equation}\label{De1}
V\big|_{\{z=0\}}=\chi_{B_1}\tilde{\chi}_{E_1}+
\chi_{\R^n\setminus B_1}\tilde{\chi}_{E_2}=\tilde{\chi}_F,
\end{equation}
where~$F$ is defined in~\eqref{AA}. This establishes ii). 

Also, $\phi=1$ outside~${\mathcal{B}}_{1+\eps}^+$ hence
\begin{equation}\label{De2}
{\mbox{$V=U_2$ outside ${\mathcal{B}}_{1+\eps}^+$,}}
\end{equation}
thus proving i).

Now we show~\eqref{AA2}. We observe that
$$ \nabla V=\nabla U_1+(U_2-U_1)\nabla \phi+\phi\nabla(U_2-U_1).$$
Therefore, by Young inequality we have 
$$|\nabla V|^2\le
C\left(|\nabla\phi|^2|U_2-U_1|^2+|\nabla U_1|^2+|\nabla U_2|^2\right), $$
for suitable~$C>0$.
Hence, integrating over~${\mathcal{B}}^+_{1+\eps}\setminus {\mathcal{B}}^+_{1-\eps}$ 
we get
\begin{equation}\label{De3}\begin{split}
&\int_{{\mathcal{B}}^+_{1+\eps}\setminus {\mathcal{B}}^+_{1-\eps}}
z^{1-\sigma} |\nabla V|^2 \,dX\\
&\qquad \le\, C \int_{{\mathcal{B}}^+_{1+\eps}\setminus {\mathcal{B}}^+_{1-\eps}}
z^{1-\sigma} \big( 
|\nabla\phi|^2|U_2-U_1|^2+|\nabla U_1|^2+|\nabla U_2|^2
\big)\,dX,
\end{split}\end{equation}
for some~$C>0$. Furthermore, $V=U_1$ in ${\mathcal{B}}^+_{1-\eps}$. Thus, using~\eqref{De2}
and~\eqref{De3} we obtain that
\begin{eqnarray*}
&& \int_{{\mathcal{B}}^+_{3/2}} z^{1-\sigma} |\nabla V|^2 \,dX \\
&=& \int_{{\mathcal{B}}^+_{1-\eps}} z^{1-\sigma} |\nabla U_1|^2 \,dX
+\int_{{\mathcal{B}}^+_{3/2}\setminus {\mathcal{B}}^+_{1+\eps}}
z^{1-\sigma} |\nabla U_2|^2 \,dX
+\int_{{\mathcal{B}}^+_{1+\eps}\setminus {\mathcal{B}}^+_{1-\eps}}
z^{1-\sigma} |\nabla V|^2 \,dX \\
&\le &
\int_{{\mathcal{B}}^+_{1}} z^{1-\sigma} |\nabla U_1|^2 \,dX
+\int_{{\mathcal{B}}^+_{3/2}}
z^{1-\sigma} |\nabla U_2|^2 \,dX
-\int_{{\mathcal{B}}^+_{1+\eps}}
z^{1-\sigma} |\nabla U_2|^2 \,dX
\\ &&\qquad+C \int_{{\mathcal{B}}^+_{1+\eps}\setminus {\mathcal{B}}^+_{1-\eps}}
z^{1-\sigma} \big(
|\nabla\phi|^2|U_2-U_1|^2+|\nabla U_1|^2+|\nabla U_2|^2
\big)\,dX.
\end{eqnarray*}
This implies~\eqref{AA2} and concludes the proof of Lemma \ref{LE conv}.
\end{proof}

\begin{lemma}\label{6df7u8gihojgfff55}
Let $E_1$, $E_2\subseteq\R^n$ and~$F:= (E_1\cap B_1)\cup (E_2\setminus B_1)$.
Then 
\begin{equation}\label{A111}
F\cap B_1=E_1\cap B_1 \ {\mbox{ and }} \ F\setminus B_1 = E_2\setminus B_1, 
\end{equation}
and 
\begin{equation}\begin{split}\label{A3}
& \Per_\sigma(F,B_{3/2})-\Per_\sigma(E_2,B_{3/2})\\
&\qquad \le \Per_\sigma(E_1,B_{1})-\Per_\sigma(E_2,B_{1}) 
+\mathcal L(B_1, (E_1\Delta E_2)\setminus B_1).
\end{split}
\end{equation}
\end{lemma}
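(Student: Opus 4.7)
My plan is to establish \eqref{A111} and \eqref{A3} via a locality-plus-pointwise-bound strategy. The identity \eqref{A111} is immediate from the definition $F:=(E_1\cap B_1)\cup(E_2\setminus B_1)$, so I focus on \eqref{A3}.

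\textbf{Step 1 (locality).} I first show that whenever two sets $A_1,A_2$ coincide outside $B_1$,
\[
\Per_\sigma(A_1,B_{3/2})-\Per_\sigma(A_2,B_{3/2})=\Per_\sigma(A_1,B_1)-\Per_\sigma(A_2,B_1).
\]
To see this, I decompose $B_{3/2}=B_1\sqcup(B_{3/2}\setminus B_1)$ and expand the three interactions in \eqref{sper}. A direct computation gives
$\Per_\sigma(A,B_{3/2})-\Per_\sigma(A,B_1)=\mathcal L(A\cap Y,A^c\cap Y)+\mathcal L(A\cap Y,A^c\cap B_{3/2}^c)+\mathcal L(A\cap B_{3/2}^c,A^c\cap Y),$
where $Y:=B_{3/2}\setminus B_1$. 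Each summand on the right depends only on $A\cap B_1^c$. Taking $A=F$ and $A=E_2$ and subtracting (using that $F$ and $E_2$ agree on~$B_1^c$) gives the identity. This reduces \eqref{A3} to
\[
\Per_\sigma(F,B_1)-\Per_\sigma(E_1,B_1)\le \mathcal L(B_1,(E_1\Delta E_2)\setminus B_1).
\]

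\textbf{Step 2 (reduction to a cross-interaction).} Now I expand both perimeters via \eqref{sper} with $\Omega=B_1$. The ``internal'' interaction $\mathcal L(A\cap B_1,A^c\cap B_1)$ is the same for $F$ and $E_1$ (since $F\cap B_1=E_1\cap B_1$), so it cancels. The remaining cross-interactions can be written as a single symmetric integral: using $F=E_1$ on $B_1$ and $F=E_2$ on $B_1^c$, the difference equals
\[
\iint_{B_1\times B_1^c}\frac{|\chi_{E_1}(x)-\chi_{E_2}(y)|^2-|\chi_{E_1}(x)-\chi_{E_1}(y)|^2}{|x-y|^{n+\sigma}}\,dx\,dy.
\]

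\textbf{Step 3 (pointwise bound).} The numerator above is a difference of two $\{0,1\}$-valued expressions and so lies in $\{-1,0,1\}$. A check of the eight combinations of $(\chi_{E_1}(x),\chi_{E_1}(y),\chi_{E_2}(y))\in\{0,1\}^3$ shows that the numerator vanishes whenever $\chi_{E_1}(y)=\chi_{E_2}(y)$, i.e., whenever $y\notin E_1\Delta E_2$. Hence the integrand is bounded pointwise on $B_1\times B_1^c$ by $\chi_{(E_1\Delta E_2)\setminus B_1}(y)\,|x-y|^{-n-\sigma}$, and integration yields exactly $\mathcal L(B_1,(E_1\Delta E_2)\setminus B_1)$, proving \eqref{A3}.

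The proof is essentially a careful bookkeeping exercise. The one mildly subtle point is the pointwise bound in Step~3: without first organizing the cross-interactions into a single symmetric integral, and without restricting attention to the pairs for which $y\in E_1\Delta E_2$, one would obtain strictly weaker bounds (for instance, crude positive/negative splittings produce unwanted terms involving $E_1$ outside $B_1$, which one has no control over).
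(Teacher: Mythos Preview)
Your proof is correct and follows essentially the same route as the paper: both first use locality (since $F=E_2$ outside $B_1$) to reduce from $B_{3/2}$ to $B_1$, then cancel the internal interaction and bound the remaining cross terms by the interaction of $B_1$ with $(E_1\Delta E_2)\setminus B_1$. The only cosmetic difference is that the paper works directly with the four $\mathcal L$-terms and uses set decompositions such as $E_2^c\cap B_1^c=\big((E_1^c\cap E_2^c)\cup(E_1\setminus E_2)\big)\cap B_1^c$ to isolate the contribution of $E_1\Delta E_2$, whereas you package the same computation as a single integral with integrand $|\chi_{E_1}(x)-\chi_{E_2}(y)|^2-|\chi_{E_1}(x)-\chi_{E_1}(y)|^2$ and bound it pointwise; the two are equivalent.
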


\begin{proof} 
It is clear that~$F$ satisfies~\eqref{A111}.
Now we prove \eqref{A3}. For this, we use \eqref{A111} to see that
\begin{equation}\label{jskd88}
\Per_\sigma(F,B_{3/2})-\Per_\sigma(E_2,B_{3/2})=
\Per_\sigma(F,B_{1})-\Per_\sigma(E_2,B_{1}).
\end{equation}
Furthermore, \eqref{A111} also gives that
\begin{eqnarray*}
&& \Per_\sigma(F,B_{1})-\Per_\sigma(E_1,B_{1}) \\
&& \qquad = 
\mathcal{L}(F\cap B_1, F^c\cap B_1)
+\mathcal{L}(F\cap B_1, F^c\cap B_1^c)
+\mathcal{L}(F^c\cap B_1, F\cap B_1^c) \\
&& \qquad \qquad-\mathcal{L}(E_1\cap B_1, E_1^c\cap B_1)
-\mathcal{L}(E_1\cap B_1, E_1^c\cap B_1^c)
-\mathcal{L}(E_1^c\cap B_1, E_1\cap B_1^c) \\
&& \qquad =
\mathcal{L}(E_1\cap B_1, E_1^c\cap B_1)
+\mathcal{L}(E_1\cap B_1, E_2^c\cap B_1^c)
+\mathcal{L}(E_1^c\cap B_1, E_2\cap B_1^c) \\
&& \qquad \qquad-\mathcal{L}(E_1\cap B_1, E_1^c\cap B_1)
-\mathcal{L}(E_1\cap B_1, E_1^c\cap B_1^c)
-\mathcal{L}(E_1^c\cap B_1, E_1\cap B_1^c) \\
&& \qquad = \mathcal{L}(E_1\cap B_1, E_2^c\cap B_1^c)
-\mathcal{L}(E_1\cap B_1, E_1^c\cap B_1^c) \\
&& \qquad \qquad+ \mathcal{L}(E_1^c\cap B_1, E_2\cap B_1^c)
-\mathcal{L}(E_1^c\cap B_1, E_1\cap B_1^c) \\
&& \qquad \le \mathcal{L}(E_1\cap B_1, (E_1\setminus E_2)\cap B_1^c)
+\mathcal{L}(E_1^c\cap B_1, (E_2\setminus E_1)\cap B_1^c)
\\ && \qquad \le \mathcal{L}(B_1, (E_1\setminus E_2)\cap B_1^c)
+\mathcal{L}(B_1, (E_2\setminus E_1)\cap B_1^c)
\\ && \qquad =\mathcal{L}(B_1, (E_1\Delta E_2)\cap B_1^c)
.\end{eqnarray*}
By combining this and \eqref{jskd88}, we conclude that
\begin{eqnarray*}
&& \Per_\sigma(F,B_{3/2})-\Per_\sigma(E_2,B_{3/2}) \\
&& \qquad=\Per_\sigma(F,B_{1})-\Per_\sigma(E_2,B_{1})
+\Per_\sigma(E_1,B_{1})-\Per_\sigma(E_1,B_{1})\\
&& \qquad\le \Per_\sigma(E_1,B_{1})-\Per_\sigma(E_2,B_{1})
+\mathcal{L}(B_1, (E_1\Delta E_2)\cap B_1^c),
\end{eqnarray*}
which establishes \eqref{A3}.
\end{proof}

\section{Uniform energy bounds for minimizing pairs and proof of Theorem~\ref{ENERGY}}\label{ENERGY:sec}

Here we prove that if~$(u,E)$ is a minimizing pair in some
ball then its energy in a smaller ball is bounded uniformly,
only in dependence of a weighted $L^2$ norm of~$u$.
For this, we start with some technical observations:

\begin{lemma}\label{6ydhwiiw}
Let~$\eta\in C^\infty_0(B_1)$ and~$u:\R^n\rightarrow\R$
be a measurable function. Then
\begin{equation}\label{8ssqff56ff9}
\iint_{\R^{2n}\setminus(B_1^c)^2} |u(y)|^2\,\frac{|\eta(x)-\eta(y)|^2}{
|x-y|^{n+2s}}\,dx\,dy \le C \int_{\R^n}
\frac{|u(y)|^2}{1+|y|^{n+2s}}\,dy.
\end{equation}
Here~$C>0$ only depends on~$\|\eta\|_{C^1(\R^n)}$, $n$ and~$s$.
\end{lemma}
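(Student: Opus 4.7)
The plan is to decompose the integration domain $\R^{2n}\setminus(B_1^c)^2$ into the two disjoint pieces $B_1\times\R^n$ and $B_1^c\times B_1$, and then bound the inner integral in $x$ for each fixed $y$. A key observation is that, since $\eta\in C^\infty_0(B_1)$, its support is compactly contained in $B_1$, so $\eta(y)=0$ whenever $y\in B_1^c$; this will be exploited in the second piece.

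For the first piece, when $y\in B_1$, I would estimate
$$\int_{\R^n}\frac{|\eta(x)-\eta(y)|^2}{|x-y|^{n+2s}}\,dx$$
by splitting at $|x-y|=1$. On $\{|x-y|\le 1\}$ I would use the global Lipschitz bound $|\eta(x)-\eta(y)|\le \|\nabla\eta\|_{L^\infty}|x-y|$, so the integrand is majorized by $\|\nabla\eta\|_{L^\infty}^2 |x-y|^{2-n-2s}$, and this is integrable near the origin thanks to $2s<2$. On $\{|x-y|>1\}$ I would use the trivial bound $|\eta(x)-\eta(y)|\le 2\|\eta\|_{L^\infty}$, so the integrand is dominated by $4\|\eta\|_{L^\infty}^2|x-y|^{-n-2s}$, integrable at infinity thanks to $2s>0$. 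Both contributions are bounded by a constant depending only on $\|\eta\|_{C^1}$, $n$, $s$. Since $1+|y|^{n+2s}\le 2$ on $B_1$, the contribution of this first piece to the left-hand side of \eqref{8ssqff56ff9} is controlled by $C\int_{\R^n}|u(y)|^2\,(1+|y|^{n+2s})^{-1}\,dy$.

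For the second piece, when $y\in B_1^c$ and $x\in B_1$, the identity $\eta(y)=0$ gives $|\eta(x)-\eta(y)|=|\eta(x)|$. I would further split $B_1^c$ into the annular region $\{1\le|y|\le 2\}$ and the far zone $\{|y|\ge 2\}$. In the annular region $|x-y|\le 3$, and the Lipschitz bound together with $2s<2$ renders the inner integral uniformly bounded by a constant; the factor $1+|y|^{n+2s}$ is also comparable to a constant there, so this portion is absorbed into $C\int|u(y)|^2(1+|y|^{n+2s})^{-1}dy$. On the far zone $\{|y|\ge 2\}$ I would use $|x-y|\ge |y|-|x|\ge |y|/2$, so that the inner integral is bounded by
$$\int_{B_1}\frac{|\eta(x)|^2}{|x-y|^{n+2s}}\,dx\le \frac{2^{n+2s}|B_1|\,\|\eta\|_{L^\infty}^2}{|y|^{n+2s}},$$
which exactly matches the weight $(1+|y|^{n+2s})^{-1}$.

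Collecting all three regimes yields \eqref{8ssqff56ff9}. The argument is entirely routine; the only care needed is the bookkeeping of the three regimes (interior $y\in B_1$, near-boundary $1\le|y|\le 2$, and far-field $|y|\ge 2$), ensuring in each that the inner integral decays at the rate compatible with $(1+|y|^{n+2s})^{-1}$.
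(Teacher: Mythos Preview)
Your argument is correct and follows essentially the same route as the paper's proof: bound the inner $x$-integral by a constant when $y\in B_1$ (splitting at $|x-y|=1$), by a constant when $1\le|y|\le2$ via the Lipschitz estimate, and by $C|y|^{-n-2s}$ when $|y|\ge2$ via $|x-y|\ge|y|/2$, then sum the three contributions. One small slip: the decomposition you \emph{write} is $B_1\times\R^n$ and $B_1^c\times B_1$, but the decomposition you actually \emph{analyze} is $\R^n\times B_1$ (first piece, $y\in B_1$) and $B_1\times B_1^c$ (second piece, $y\in B_1^c$); just swap the order of the factors in your stated decomposition to match.
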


\begin{proof} We suppose that the right-hand side of~\eqref{8ssqff56ff9}
is finite, otherwise we are done. Then we observe that, for any~$y\in\R^n$,
\begin{equation}\label{8ssqff56ff9-1}
\begin{split}
& \int_{\R^n}\frac{|\eta(x)-\eta(y)|^2}{
|x-y|^{n+2s}}\,dx \le \int_{\R^n}\frac{\min\{
4\|\eta\|_{L^\infty(\R^n)}^2, \
\|\nabla\eta\|_{L^\infty(\R^n)}^2 |x-y|^2\}}{
|x-y|^{n+2s}}\,dx \\
&\qquad\le C\int_{\R^n}\frac{\min\{1,|z|^2\}}{|z|^{n+2s}}\,dz\le C,
\end{split}\end{equation}
for some~$C>0$ (that may be different from step to step). Similarly,
we have that
\begin{equation}\label{8ssqff56ff9-2}
\begin{split}
& \sup_{y\in B_2\setminus B_1} \int_{B_1}\frac{|\eta(x)-\eta(y)|^2}{
|x-y|^{n+2s}}\,dx \le
\sup_{y\in B_2\setminus B_1} \int_{B_1}\frac{
\|\nabla\eta\|_{L^\infty(\R^n)}^2 |x-y|^2}{|x-y|^{n+2s}}\,dx
\\ &\qquad\le C \int_{B_3} 
\frac{|z|^2}{|z|^{n+2s}}\,dz\le C.
\end{split}\end{equation}
Furthermore, if~$y\in\R^n\setminus B_2$ and~$x\in B_1$,
we have that~$|x-y|\ge|y|-|x|\ge |y|/2$, therefore
\begin{equation}\label{8ssqff56ff9-3}
\begin{split}
&\int_{B_1}\frac{|\eta(x)-\eta(y)|^2}{
|x-y|^{n+2s}}\,dx \le
4\|\eta\|_{L^\infty(\R^n)}^2
\cdot 2^{n+2s}\int_{B_1}\frac{dx}{
|y|^{n+2s}} \\ &\qquad\le \frac{C}{|y|^{n+2s}} \qquad{\mbox{ for any }}y\in\R^n\setminus B_2.
\end{split}\end{equation}
Accordingly, using \eqref{8ssqff56ff9-1}, \eqref{8ssqff56ff9-2}
and~\eqref{8ssqff56ff9-3}, we see that
\begin{eqnarray*}
&& \iint_{\R^{2n}\setminus(B_1^c)^2} |u(y)|^2\,\frac{|\eta(x)-\eta(y)|^2}{
|x-y|^{n+2s}}\,dx\,dy \\
&=&
\iint_{\R^{n}\times B_1} |u(y)|^2\,\frac{|\eta(x)-\eta(y)|^2}{
|x-y|^{n+2s}}\,dx\,dy +
\iint_{B_1\times (B_2\setminus B_1)} |u(y)|^2\,\frac{|\eta(x)-\eta(y)|^2}{
|x-y|^{n+2s}}\,dx\,dy \\ &&\qquad+
\iint_{B_1\times (\R^n\setminus B_2)} |u(y)|^2\,\frac{|\eta(x)-\eta(y)|^2}{
|x-y|^{n+2s}}\,dx\,dy \\
&\le& C\left(
\int_{B_1} |u(y)|^2\,dy
+\int_{B_2\setminus B_1} |u(y)|^2\,dy
+\int_{\R^n\setminus B_2} \frac{|u(y)|^2}{|y|^{n+2s}}\,dy
\right),
\end{eqnarray*}
that gives~\eqref{8ssqff56ff9}.
\end{proof}

\begin{corollary}\label{cor-energy-bound}
Let~$(u,E)$ be a minimizing pair in~$B_2$. Then
$$ \iint_{\R^{2n}\setminus(B_{1}^c)^2} \frac{|u(x)-u(y)|^2}{
|x-y|^{n+2s}}\,dx\,dy \le C  \int_{\R^n}
\frac{|u(y)|^2}{1+|y|^{n+2s}}\,dy,$$
for some~$C>0$ only depending on~$n$ and~$s$.
\end{corollary}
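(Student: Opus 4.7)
My plan is to construct a competitor $(v, E)$ by smoothly cutting off $u$ near $\partial B_1$, extract a comparison between Dirichlet energies from minimality, and then iterate a contraction-type inequality along a dyadic family of balls interpolating between $B_1$ and $B_2$ in order to eliminate the (finite but otherwise uncontrolled) full energy $\mathcal{E}_2(u) := \iint_{\R^{2n}\setminus(B_2^c)^2}(u(x)-u(y))^2/|x-y|^{n+2s}\,dx\,dy$. Concretely, for $1 \le R < R' \le 2$ pick a smooth cutoff $\eta \in C_0^\infty(B_{R'})$ with $\eta \equiv 1$ on $B_R$ and $0 \le \eta \le 1$, and set $v := (1-\eta)u$ paired with $F := E$. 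The pair $(v, E)$ is admissible since $v = u$ outside $B_{R'} \subseteq B_2$ and, because $1-\eta \ge 0$, $v$ has the same sign as $u$ on $B_2$; as $F = E$ the $\sigma$-perimeters cancel, so minimality gives $\mathcal{E}_{R'}(u) \le \mathcal{E}_{R'}(v)$ (the fact that $v \equiv u$ on pairs with both points outside $B_{R'}$ reduces minimality in $B_2$ to this comparison on $\R^{2n}\setminus(B_{R'}^c)^2$).

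Writing $\zeta := 1-\eta \in [0,1]$, the symmetric identity $v(x) - v(y) = \tfrac{\zeta(x)+\zeta(y)}{2}(u(x)-u(y)) + \tfrac{u(x)+u(y)}{2}(\zeta(x)-\zeta(y))$ together with Young's inequality $(a+b)^2 \le (1+\lambda)a^2 + (1+1/\lambda)b^2$ and the elementary bound $\bigl(\tfrac{\zeta(x)+\zeta(y)}{2}\bigr)^2 \le \tfrac{\zeta(x)+\zeta(y)}{2}$ (valid since $\zeta \le 1$) yield
$$ \mathcal{E}_{R'}(v) \le \tfrac{1+\lambda}{2}\iint (\zeta(x)+\zeta(y))\tfrac{(u(x)-u(y))^2}{|x-y|^{n+2s}}\,dx\,dy + C_\lambda \iint \tfrac{(u(x)+u(y))^2(\eta(x)-\eta(y))^2}{|x-y|^{n+2s}}\,dx\,dy.$$
The second integral is bounded by $CK$ via Lemma~\ref{6ydhwiiw} (after $(u(x)+u(y))^2 \le 2(u(x)^2+u(y)^2)$ and $x \leftrightarrow y$ symmetry), where $K := \int u^2/(1+|y|^{n+2s})\,dy$. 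Splitting the first integration domain according to whether at least one of $x, y$ lies in $B_R$ (so $\zeta(x)+\zeta(y) \le 1$, since $\zeta \equiv 0$ on $B_R$) or both lie outside $B_R$ (so $\zeta(x)+\zeta(y) \le 2$) gives the crucial gain $\iint (\zeta(x)+\zeta(y))(u(x)-u(y))^2/|x-y|^{n+2s}\,dx\,dy \le 2\mathcal{E}_{R'}(u) - \mathcal{E}_R(u)$. Substituting into $\mathcal{E}_{R'}(u) \le \mathcal{E}_{R'}(v)$ and rearranging produces the contraction
$$ \mathcal{E}_R(u) \le \alpha\,\mathcal{E}_{R'}(u) + C_{R,R',\lambda}\,K, \qquad \alpha := \tfrac{2\lambda}{1+\lambda} < 1.$$

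I would then iterate this estimate along a dyadic sequence $R_k := 2 - 2^{-k}$, with $R_0 = 1$ and $R_k \nearrow 2$; since the widths $R_{k+1}-R_k = 2^{-(k+1)}$ force the step-$k$ constant to grow at most like $C_k = O(4^{k+1})$, after $N$ iterations one has $\mathcal{E}_1(u) \le \alpha^N \mathcal{E}_2(u) + CK \sum_{k=0}^{N-1}(4\alpha)^k$. Choosing $\lambda$ small enough that $4\alpha < 1$ (say $\lambda < 1/7$) makes the geometric series convergent, and since $u \in H^s_{\mathrm{loc}}$ guarantees $\mathcal{E}_2(u) < \infty$, letting $N \to \infty$ kills the first term and delivers $\mathcal{E}_1(u) \le CK$. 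The principal technical difficulty is producing the strict contraction $\alpha < 1$: the precise bound $(\zeta(x)+\zeta(y))^2/4 \le (\zeta(x)+\zeta(y))/2$ together with the domain splitting above is what reduces the Young-inequality coefficient on the bulk $(u(x)-u(y))^2$-term below one, which is exactly what allows $\mathcal{E}_R(u)$ to be absorbed into $\mathcal{E}_{R'}(u)$ with a net gain.
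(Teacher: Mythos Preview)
Your argument is correct, but it follows a different route from the paper's proof. The paper takes a one-shot approach via the first variation: they perturb by $u_\eps := (1+\eps\eta^2)u$ with $\eta\in C^\infty_0(B_2)$, $\eta=1$ on $B_1$, differentiate the minimality inequality at $\eps=0$ to obtain the \emph{equality}
\[
\iint_{\R^{2n}\setminus(B_2^c)^2}\frac{(u(x)-u(y))(\eta^2(x)u(x)-\eta^2(y)u(y))}{|x-y|^{n+2s}}\,dx\,dy=0,
\]
and then use an algebraic identity together with Young's inequality to bound $\iint_{\R^{2n}\setminus(B_1^c)^2}|u(x)-u(y)|^2/|x-y|^{n+2s}$ directly by the right-hand side in a single step, without iteration.

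Your approach instead compares $u$ to the competitor $v=(1-\eta)u$ at the level of the full energy, which only yields an inequality; this is why the uncontrolled term $\mathcal{E}_{R'}(u)$ survives and forces the dyadic hole-filling iteration. This is a standard Caccioppoli-type scheme and works perfectly well here (your contraction factor $\alpha=2\lambda/(1+\lambda)$ and the constraint $\lambda<1/7$ ensuring $4\alpha<1$ are correct, and the finiteness of $\mathcal{E}_2(u)$ needed to send $\alpha^N\mathcal{E}_2(u)\to0$ follows from $u$ being a minimizer in $B_2$). The trade-off is that the paper's first-variation trick is shorter and avoids the iteration entirely, while your argument is more elementary in that it never differentiates in a perturbation parameter and relies only on a single energy comparison plus a robust absorption mechanism.
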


\begin{proof} Let~$\eta\in C^\infty_0(B_2)$ with~$\eta=1$
in~$B_1$. Let~$\eps\in\R$
and~$u_\eps:=(1+\eps\eta^2)u$.
We observe that the sign of~$u_\eps$ is the same as the one of~$u$,
as long as $\eps$ is sufficiently small, and so~$(u_\eps,E)$ is
an admissible competitor. Therefore~${\mathcal{F}}(u_\eps,E)-
{\mathcal{F}}(u,E)\ge0$. Dividing by~$\eps$ and taking the limit as~$\eps\to0$,
we obtain that
\begin{equation}\label{EUler}
\iint_{\R^{2n}\setminus(B_{2}^c)^2}
\frac{\big( u(x)-u(y)\big)\,\big( \eta^2(x)u(x)-\eta^2(y) u(y)\big)}{
|x-y|^{n+2s}}\,dx\,dy =0.\end{equation}
Moreover
\begin{equation}\begin{split}\label{xcvbuyq6thjcfff}
& \big( u(x)-u(y)\big)\,\big( \eta^2(x)u(x)-\eta^2(y) u(y)\big)\\
=\,& \big( u(x)-u(y)\big)\,\big( \eta^2(x)u(x)-\eta^2(x)u(y)+
\eta^2(x)u(y)-\eta^2(y) u(y)\big)\\
=\,& \eta^2(x)\big| u(x)-u(y)\big|^2 +u(y)\big( u(x)-u(y)\big)
\big( \eta(x)-\eta(y)\big) \big( \eta(x)+\eta(y)\big) \\
\ge\,& \eta^2(x)\big| u(x)-u(y)\big|^2 -\frac18 \big|\eta(x)+\eta(y)\big|^2
\big| u(x)-u(y)\big|^2 -8u^2(y) \big|\eta(x)-\eta(y)\big|^2.
\end{split}\end{equation}
Also, if we use the symmetry of the kernel,
we see that
\begin{eqnarray*}
&& \iint_{\R^{2n}\setminus(B_{2}^c)^2} \frac{
\big|\eta(x)+\eta(y)\big|^2
\big| u(x)-u(y)\big|^2}{|x-y|^{n+2s}}\,dx\,dy\\
&\le& 
2 \iint_{\R^{2n}\setminus(B_{2}^c)^2} \frac{
\big(\eta^2(x)+\eta^2(y)\big)\,
\big| u(x)-u(y)\big|^2}{|x-y|^{n+2s}}\,dx\,dy \\
&=& 4\iint_{\R^{2n}\setminus(B_{2}^c)^2} \frac{
\eta^2(x)      
\big| u(x)-u(y)\big|^2}{|x-y|^{n+2s}}\,dx\,dy.
\end{eqnarray*}
Consequently, if we integrate~\eqref{xcvbuyq6thjcfff}
and we use the latter estimate, we conclude that
\begin{eqnarray*}
&& \iint_{\R^{2n}\setminus(B_{2}^c)^2} \frac{
\big( u(x)-u(y)\big)\,\big( \eta^2(x)u(x)-\eta^2(y) u(y)\big)
}{|x-y|^{n+2s}}\,dx\,dy \\
&\ge&
\iint_{\R^{2n}\setminus(B_{2}^c)^2} \frac{
\eta^2(x)\big| u(x)-u(y)\big|^2 -\frac18 \big|\eta(x)+\eta(y)\big|^2
\big| u(x)-u(y)\big|^2 -8u^2(y) \big|\eta(x)-\eta(y)\big|^2
}{|x-y|^{n+2s}}\,dx\,dy
\\ &\ge&
\iint_{\R^{2n}\setminus(B_{2}^c)^2} \frac{
\frac12\eta^2(x)\big| u(x)-u(y)\big|^2 
-8u^2(y) \big|\eta(x)-\eta(y)\big|^2
}{|x-y|^{n+2s}}\,dx\,dy.
\end{eqnarray*}
By inserting this into~\eqref{EUler} and using that~$\eta=1$ in~$B_1$
we obtain
$$ \iint_{B_1\times\R^n} \frac{\big| u(x)-u(y)\big|^2
}{|x-y|^{n+2s}}\,dx\,dy\le
16 \iint_{\R^{2n}\setminus(B_{2}^c)^2} \frac{u^2(y) \big|\eta(x)-\eta(y)\big|^2
}{|x-y|^{n+2s}}\,dx\,dy.$$
By interchanging the variable we obtain a similar estimates
with~$\R^n\times B_1$ as domain in the left-hand side, and therefore,
by summing up
$$ \iint_{\R^{2n}\setminus(B_1^c)^2} \frac{\big| u(x)-u(y)\big|^2
}{|x-y|^{n+2s}}\,dx\,dy\le
32 \iint_{\R^{2n}\setminus(B_{2}^c)^2} \frac{u^2(y) \big|\eta(x)-\eta(y)\big|^2
}{|x-y|^{n+2s}}\,dx\,dy.$$
This and Lemma~\ref{6ydhwiiw}
imply the desired result.
\end{proof}

Now we are ready for the completion of the proof of Theorem~\ref{ENERGY}:

\begin{proof}[Proof of Theorem~\ref{ENERGY}] We use Lemma~\ref{6df7u8gihojgfff55} with~$E_1:=\R^n$ and
$E_2:=E$, and we obtain that there exists~$F$ such that~$F\setminus B_1=
E\setminus B_1$ and
\begin{equation}\label{A3-qui}
\Per_\sigma(F,B_{3/2})-\Per_\sigma(E,B_{3/2})
\le -\Per_\sigma(E,B_{1})
+\mathcal L(B_1, B_1^c).\end{equation}
In addition, we take~$\eta\in C^\infty_0 (B_{3/2},[0,1])$
with~$\eta=1$ in~$B_1$,
and we define~$v:=(1-\eta) u$. We observe that~$v=u$ outside~$B_{3/2}$.
Also, the positive set of~$u$ and~$v$ are the same and~$v=0$ in~$B_1$.
This implies that~$v\ge0$ in~$F$ and~$v\le0$ in~$F^c$, thus~$(v,F)$
is an admissible competitor in~$B_{3/2}$, which gives that
\begin{equation}\label{A3567d-qui}
\begin{split} &
\iint_{\R^{2n}\setminus(B_{3/2}^c)^2} \frac{\big| v(x)-v(y)\big|^2
}{|x-y|^{n+2s}}\,dx\,dy 
+\Per_\sigma(F,B_{3/2})\\ &\qquad-
\iint_{\R^{2n}\setminus(B_{3/2}^c)^2} \frac{\big| u(x)-u(y)\big|^2
}{|x-y|^{n+2s}}\,dx\,dy
-\Per_\sigma(E,B_{3/2})\ge0.\end{split}
\end{equation}
Now we observe that
\begin{eqnarray*}
&& \big| v(x)-v(y)\big|^2
= \big| (1-\eta(x))u(x)-(1-\eta(x))u(y)+(1-\eta(x))u(y)
-(1-\eta(y))u(y)\big|^2 \\
&&\qquad\le 2 \left( \big(1-\eta(x)\big)^2 \big|u(x)-u(y)\big|^2 +
u^2(y) \big|\eta(x)-\eta(y)\big|^2\right).
\end{eqnarray*}
Integrating this inequality and using Lemma~\ref{6ydhwiiw}
and Corollary~\ref{cor-energy-bound} we obtain that
$$ \iint_{\R^{2n}\setminus(B_{3/2}^c)^2} \frac{\big| v(x)-v(y)\big|^2
}{|x-y|^{n+2s}}\,dx\,dy\le 
C \int_{\R^n}
\frac{|u(y)|^2}{1+|y|^{n+2s}}\,dy,$$
for some~$C>0$. This, \eqref{A3-qui}
and~\eqref{A3567d-qui} imply that
\begin{eqnarray*}&&
0 \le
\iint_{\R^{2n}\setminus(B_{3/2}^c)^2} \frac{\big| v(x)-v(y)\big|^2
-\big| u(x)-u(y)\big|^2
}{|x-y|^{n+2s}}\,dx\,dy -\Per_\sigma(E,B_{1})+C
\\ &&\quad\le C \int_{\R^n}
\frac{|u(y)|^2}{1+|y|^{n+2s}}\,dy
-
\iint_{\R^{2n}\setminus(B_{3/2}^c)^2} \frac{\big| u(x)-u(y)\big|^2
}{|x-y|^{n+2s}}\,dx\,dy-\Per_\sigma(E,B_{1})+C,
\end{eqnarray*}
up to renaming~$C$, and this implies the thesis of Theorem~\ref{ENERGY}.
\end{proof}
 
\section{Convergence results and proof of Theorem~\ref{prop:conv_ext}}\label{prop:conv_ext:sec}

In the sequel, given~$\alpha\in(0,1)$
and~$r>0$, we denote by~$L^2_\alpha({\mathcal{B}}_r^+)$
the weighted Lebesgue space with respect to the weight~$z^{1-2\alpha}$,
i.e. the Lebesgue space with norm
$$ \|v\|_{L^2_\alpha({\mathcal{B}}_r^+)}:=\sqrt{
\int_{ {\mathcal{B}}_r^+ } z^{1-2\alpha} |v(X)|^2\,dX
}.$$
Now we study the convergence of the energy for a sequence
of minimizing pairs.

For this, we first obtain a useful ``integration by parts'' formula.

\begin{lemma}\label{parts} Let~$R>0$.
Let~$u:\R^n\rightarrow\R$ be such that
\begin{equation}\label{JK:POfg98}
{\mbox{$|u(x)|\le C\,|x|^\alpha$,
with~$\alpha<2s$ and~$C>0$. }}\end{equation}
Suppose that
\begin{equation}\label{4}
{\mbox{$(-\Delta)^s u=0$ in~$B_R\cap \{u\ne0\}$,}}\end{equation} and let~$\overline u$ be
as in~\eqref{ext}. Assume also that
\begin{equation}\label{JK:POfg98-BOA}
{\mbox{$\overline u$ is continuous
in~$\overline{\mathcal{B}^+_R}$}}\end{equation} and
\begin{equation}\label{44}
|\nabla \overline u|\in L^2_s({\mathcal{B}}_R^+).\end{equation}
Then
\begin{equation}\label{45}
\int_{\R^{n+1}_+} z^{1-2s} \nabla \overline u\cdot\nabla (\overline u \phi)\,dX\,=\,0\end{equation}
for any~$\phi\in C^\infty_0({\mathcal{B}}_R^+)$.
\end{lemma}

\begin{proof} We observe that the equation in~\eqref{4}
is well defined, thanks to~\eqref{JK:POfg98}.
Also, in virtue of~\eqref{JK:POfg98-BOA},
we have that~$u$ is continuous.
By Sard's Lemma, we can take a sequence of~$\eps\searrow0$
such that~$S_1:=\{\overline u=\pm\eps\}$ is a smooth set in~$\R^{n+1}_+$.
So we write~${\mathcal{B}}_R^+\cap(\partial \{|\overline u|>\eps\})
=S_1\cup S_2$, with~$S_2\subseteq\R^n\times\{0\}$
and~$|\overline u(X)|\ge\eps$ for any~$X\in S_2$. Accordingly, from~\eqref{4},
the quantity~$z^{1-2s}\partial_z\overline u$ vanishes along~$S_2$ and therefore, by the
Divergence
Theorem,
\begin{eqnarray*}
&&\int_{\{ |\overline u|>\eps \}} {\rm div}\,(z^{1-2s} \overline u\phi \nabla \overline u)\,dX=
-\int_{S_1} z^{1-2s} \overline u\,\phi \,
\partial_z\overline u \,d{\mathcal{H}}^n = \mp\eps \int_{S_1} z^{1-2s} \phi \partial_z\overline u \,d{\mathcal{H}}^n \\
&&\qquad=\mp\eps \int_{\{ |\overline u|>\eps \}} {\rm div}\,(z^{1-2s} \phi \nabla \overline u)\,dX
= \mp\eps \int_{\{ |\overline u|>\eps \}} z^{1-2s}\nabla \phi\cdot \nabla \overline u\,dX.
\end{eqnarray*}
{F}rom this and~\eqref{44} we obtain that
\begin{equation*}
\lim_{\eps\searrow0} \int_{\{ |\overline u|>\eps \}} z^{1-2s} \nabla(\overline u\phi)\cdot \nabla \overline u\,dX\,=\,
\lim_{\eps\searrow0} \int_{\{ |\overline u|>\eps \}} {\rm div}\,(z^{1-2s} \overline u\phi \nabla \overline u)\,dX\,=\,0.
\qedhere\end{equation*}
\end{proof}

The importance of the ``integration by parts'' formula in~\eqref{45}
is exploited in the next observation:

\begin{lemma}\label{parts2}
Let~$u$ and~$\overline u$ be as
in Lemma~\ref{parts}. Then
\begin{equation}\label{Der4}
\int_{ {\mathcal{B}}_R^+ }
\overline u^2\, {\rm div}\,\Big(z^{1-2s} \nabla\phi\Big)\,dX=
2\int_{ {\mathcal{B}}_R^+ } z^{1-2s}\,\phi\,|\nabla \overline u|^2\,dX.
\end{equation}
for any~$\phi\in C^\infty_0({\mathcal{B}}_R)$ that is even in~$z$.
\end{lemma}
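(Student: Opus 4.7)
The plan is to deduce \eqref{Der4} from the orthogonality relation \eqref{45} by a single integration by parts, using the even symmetry of $\phi$ in $z$ to kill the boundary term on $\{z=0\}$.

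The starting point is the pointwise algebraic identity
\[
\overline u^2\, {\rm div}\bigl(z^{1-2s}\nabla\phi\bigr) - 2\, z^{1-2s}\phi\,|\nabla\overline u|^2
\;=\; {\rm div}\bigl(z^{1-2s}\,\overline u^2\,\nabla\phi\bigr) - 2\, z^{1-2s}\,\nabla\overline u\cdot\nabla(\overline u\,\phi),
\]
which follows by expanding both divergences and applying the product rule $\nabla(\overline u\phi)=\phi\nabla\overline u+\overline u\nabla\phi$. I would then integrate this identity over ${\mathcal B}_R^+$.

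The last term on the right integrates to zero by \eqref{45} of Lemma~\ref{parts}. The divergence term on the right I would convert to a boundary integral. On the spherical portion of $\partial {\mathcal B}_R^+$, $\phi$ vanishes by compact support. On the flat portion $B_R\times\{0\}$ the outward normal is $-e_{n+1}$, so the boundary integrand is $-z^{1-2s}\overline u^2\,\partial_z\phi$; since $\phi$ is smooth and even in $z$, $\partial_z\phi$ is smooth and odd in $z$, so $\partial_z\phi(x,0)=0$ and, more precisely, $\partial_z\phi=O(z)$ near $z=0$. Hence $z^{1-2s}\partial_z\phi=O(z^{2-2s})\to 0$ as $z\searrow 0$ (since $s<1$), so this boundary term also vanishes, giving \eqref{Der4}.

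The only delicate point I anticipate is that \eqref{45} is stated for $\phi\in C_0^\infty({\mathcal B}_R^+)$ (vanishing near $\{z=0\}$), while the test function here lies in $C_0^\infty({\mathcal B}_R)$ and may touch the flat boundary. I would handle this either by a cutoff approximation $\phi\rightsquigarrow\phi\,\eta_\varepsilon(z)$ with $\eta_\varepsilon=0$ for $z\le\varepsilon$ and $\eta_\varepsilon=1$ for $z\ge 2\varepsilon$ (controlling the cutoff error via \eqref{44}, the boundedness of $\overline u$ on the support of $\phi$ coming from the continuity of $\overline u$ on $\overline{{\mathcal B}_R^+}$, and absolute continuity of the integral), or more directly by inspecting the proof of Lemma~\ref{parts}: the flux of $z^{1-2s}\overline u\,\phi\,\nabla\overline u$ across the flat portion $S_2\subset\{z=0\}$ already vanishes because $z^{1-2s}\partial_z\overline u\to 0$ there, independently of whether $\phi$ vanishes at $\{z=0\}$, so \eqref{45} extends to any $\phi\in C_0^\infty({\mathcal B}_R)$. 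This is the main technical point; the remainder of the argument is a clean computation.
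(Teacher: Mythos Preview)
Your proposal is correct and follows essentially the same route as the paper: the same pointwise identity, the same use of the Divergence Theorem on $z^{1-2s}\overline u^2\nabla\phi$ with the evenness of $\phi$ killing the flat boundary term, and the same appeal to \eqref{45}. Your observation about the discrepancy between $C_0^\infty(\mathcal{B}_R^+)$ and $C_0^\infty(\mathcal{B}_R)$ is well taken, and your second resolution is exactly right: the proof of Lemma~\ref{parts} already handles the flat piece $S_2\subset\{z=0\}$ via the vanishing of $z^{1-2s}\partial_z\overline u$, so \eqref{45} applies here without further approximation.
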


\begin{proof} Since~$\phi$ is even in~$z$, we have that~$\partial_z\phi(x,0)=0$ and so
for any~$z>0$ we have that
$$ \partial_z\phi(x,z)=\partial_z^2\phi(x,\tilde z)\,z,$$
for some~$\tilde z\in [0,z]$ and so
$$ \lim_{z\rightarrow0} z^{1-2s} \partial_z\phi(x,z)=
\lim_{z\rightarrow0} z^{2-2s}\partial_z^2\phi(x,\tilde z)=0.$$
{F}rom this
and the Divergence
Theorem, we obtain that
\begin{equation}\label{Der3}
\int_{ {\mathcal{B}}_R^+ } {\rm div}\,\Big(z^{1-2s}
\overline u^2(X)\,\nabla\phi(X) \Big)\,dX\,=\,0.
\end{equation}
Furthermore a direct computation shows that
$$ \overline u^2\, {\rm div}\,\Big(z^{1-2s} \nabla\phi\Big)
- 2z^{1-2s}\,\phi\,|\nabla \overline u|^2
={\rm div}\,\Big(z^{1-2s}
\overline u^2\,\nabla\phi \Big)
-2z^{1-2s} \nabla \overline u\cdot\nabla(\phi\overline u).$$
Consequently, if we integrate this identity
and make use of~\eqref{45}
and~\eqref{Der3}, we obtain~\eqref{Der4}.
\end{proof}

\begin{lemma}\label{yys6}
Let~$(u_m,E_m)$ be a minimizing pair in~$B_{2}$,
and let~$\overline u_m$ be the extension of~$u_m$ as in~\eqref{ext}.
Suppose that~$u_m$ converges to some~$u$ in~$L^\infty(B_2)$ and
$\overline u_m$ converges to some~$\overline u$
in~$L^\infty({\mathcal{B}}_{2}^+)$, with~$\overline u$ continuous
in~$\overline{\R^{n+1}_+}$, being~$\overline u$ the extension of~$u$
as in~\eqref{ext}. Then
$$ \lim_{m\rightarrow+\infty}\int_{ {\mathcal{B}}_1^+ } z^{1-2s} |
\nabla \overline u_m(X)|^2\,dX \,=\,
\int_{ {\mathcal{B}}_1^+ } z^{1-2s} |\nabla \overline u(X)|^2\,dX
.$$\end{lemma}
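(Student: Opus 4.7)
The idea is to use Lemma~\ref{parts2} to trade the gradient energy of $\overline u_m$ (which is hard to pass to the limit directly) for an integral of~$\overline u_m^2$ against a fixed weight (which converges easily from the $L^\infty$ convergence), and then to invoke Lemma~\ref{parts2} a second time, now for the limit function $\overline u$, to identify the resulting quantity as the gradient energy of~$\overline u$.

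Concretely, fix a small $\eta>0$ and choose cutoffs $\psi_\eta^\pm \in C^\infty_0(\mathcal B_2)$, even in~$z$, with $0\le\psi_\eta^\pm\le 1$, $\psi_\eta^+=1$ on $\mathcal B_1$ and $\operatorname{supp}\psi_\eta^+\subset\mathcal B_{1+\eta}$, while $\psi_\eta^-=1$ on $\mathcal B_{1-\eta}$ and $\operatorname{supp}\psi_\eta^-\subset\mathcal B_1$. Since $(u_m,E_m)$ is minimizing, Lemma~\ref{SH} gives $(-\Delta)^s u_m=0$ on $\{u_m\ne0\}$, and Theorem~\ref{ENERGY} (applied to the minimizing pair on $B_2$, after noting that the $L^\infty$ convergence controls the tail $\int|u_m(y)|^2(1+|y|)^{-n-2s}dy$ uniformly in~$m$) provides a bound $\int_{\mathcal B_R^+}z^{1-2s}|\nabla\overline u_m|^2\,dX\le C_R$ uniformly in~$m$, for every $R<2$. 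Thus Lemma~\ref{parts2} applies to each $\overline u_m$, yielding
\begin{equation}\label{PLAN:main}
2\int_{\mathcal B_2^+} z^{1-2s}\psi_\eta^\pm|\nabla\overline u_m|^2\,dX
=\int_{\mathcal B_2^+}\overline u_m^{\,2}\,\operatorname{div}\!\big(z^{1-2s}\nabla\psi_\eta^\pm\big)\,dX.
\end{equation}
The evenness of $\psi_\eta^\pm$ in $z$ forces $\partial_z\psi_\eta^\pm=O(z)$ near $\{z=0\}$, so $\operatorname{div}(z^{1-2s}\nabla\psi_\eta^\pm)=z^{1-2s}\Delta\psi_\eta^\pm+(1-2s)z^{-2s}\partial_z\psi_\eta^\pm$ is integrable on $\mathcal B_2^+$. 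Combined with $\overline u_m\to\overline u$ in $L^\infty(\mathcal B_2^+)$, the right-hand side of \eqref{PLAN:main} converges, as $m\to+\infty$, to the same expression with $\overline u$ in place of $\overline u_m$.

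Next, I apply Lemma~\ref{parts2} to $\overline u$ itself. Continuity of $\overline u$ is given, while the weighted bound $|\nabla\overline u|\in L^2_s(\mathcal B_2^+)$ follows from the uniform bound on $|\nabla\overline u_m|$ by weak lower semicontinuity (extracting a weak limit in $L^2_s$, which must coincide with $\nabla\overline u$ in the sense of distributions by the $L^\infty$ convergence of $\overline u_m$). The missing ingredient, and the main obstacle of the argument, is to verify that $u$ is $s$-harmonic on $\{u\ne0\}\cap B_R$. For this, observe that the $L^\infty$ convergence makes $u$ continuous on $B_2$, so $\{u>0\}$ and $\{u<0\}$ are open; on any open set $V\Subset\{u>0\}$ we have $u\ge c>0$, hence $u_m\ge c/2$ on $V$ for $m$ large, so $(-\Delta)^s u_m=0$ on $V$ in the classical sense. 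Testing against $\varphi\in C^\infty_0(V)$ gives $\int u_m(-\Delta)^s\varphi\,dx=0$; since $(-\Delta)^s\varphi(y)=O(|y|^{-n-2s})$ at infinity and the uniform bound on $\overline u_m|_{z=1}$ forces $\int|u_m(y)|(1+|y|)^{-n-2s}dy\le C$ uniformly, I may pass to the limit $m\to+\infty$ to conclude $\int u(-\Delta)^s\varphi\,dx=0$, i.e.\ $(-\Delta)^s u=0$ in $V$ in the weak sense, and thus in the classical sense on $\{u\ne0\}$. Therefore Lemma~\ref{parts2} gives
\[
\int_{\mathcal B_2^+}\overline u^{\,2}\,\operatorname{div}\!\big(z^{1-2s}\nabla\psi_\eta^\pm\big)\,dX
=2\int_{\mathcal B_2^+} z^{1-2s}\psi_\eta^\pm|\nabla\overline u|^2\,dX.
\]

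Combining the last two displays with \eqref{PLAN:main}, we obtain
\[
\lim_{m\to+\infty}\int_{\mathcal B_2^+} z^{1-2s}\psi_\eta^\pm|\nabla\overline u_m|^2\,dX
=\int_{\mathcal B_2^+} z^{1-2s}\psi_\eta^\pm|\nabla\overline u|^2\,dX.
\]
Using $\psi_\eta^-\le\chi_{\mathcal B_1^+}\le\psi_\eta^+$ and the two-sided squeeze
\[
\int z^{1-2s}\psi_\eta^-|\nabla\overline u|^2\,dX
\le\liminf_m\int_{\mathcal B_1^+}z^{1-2s}|\nabla\overline u_m|^2\,dX
\le\limsup_m\int_{\mathcal B_1^+}z^{1-2s}|\nabla\overline u_m|^2\,dX
\le\int z^{1-2s}\psi_\eta^+|\nabla\overline u|^2\,dX,
\]
the conclusion follows by sending $\eta\to0$, since $z^{1-2s}|\nabla\overline u|^2\in L^1(\mathcal B_{3/2}^+)$ dominates both outer terms and they both converge to $\int_{\mathcal B_1^+}z^{1-2s}|\nabla\overline u|^2\,dX$ by dominated convergence.
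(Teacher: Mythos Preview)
Your overall strategy---trade the gradient energy of $\overline u_m$ for an integral of $\overline u_m^{\,2}$ against a fixed weight via the identity in Lemma~\ref{parts2}, pass to the limit using the $L^\infty$ convergence, and then squeeze with inner and outer cutoffs---is exactly the one the paper uses. The difficulty is in how you justify the two ingredients you need, and here there are genuine gaps.

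First, you invoke Theorem~\ref{ENERGY} to get a uniform bound on $\int_{\mathcal B_R^+} z^{1-2s}|\nabla\overline u_m|^2\,dX$. The right-hand side of that theorem involves the tail $\int_{\R^n}|u_m(y)|^2(1+|y|)^{-n-2s}\,dy$, and you claim ``the $L^\infty$ convergence controls the tail''. But the hypothesis is convergence in $L^\infty(B_2)$ only; nothing is assumed about $u_m$ on $\R^n\setminus B_2$, so this tail is not controlled uniformly in~$m$. Second, you apply Lemma~\ref{parts2} to each $\overline u_m$. That lemma inherits the hypotheses of Lemma~\ref{parts}: a global growth bound $|u_m(x)|\le C|x|^\alpha$ with $\alpha<2s$, and continuity of $\overline u_m$ on $\overline{\mathcal B_R^+}$. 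Neither is among the assumptions of Lemma~\ref{yys6}, and you do not establish them.

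The paper avoids both problems in one stroke: instead of appealing to Lemma~\ref{parts2} for $\overline u_m$, it derives the needed identity directly from minimality. Taking the competitor $\tilde u_m:=(1+\eps\phi)\overline u_m$ in Proposition~\ref{char} (which preserves the sign constraint and the set $E_m$) and differentiating in $\eps$ yields
\[
\int_{\R^{n+1}_+} z^{1-2s}\nabla\overline u_m\cdot\nabla(\phi\,\overline u_m)\,dX=0
\qquad\text{for every }\phi\in C^\infty_0(\mathcal B_2^+).
\]
This is precisely the conclusion~\eqref{45} of Lemma~\ref{parts}, obtained without any growth or continuity hypothesis on $u_m$. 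From it one gets the integration-by-parts formula~\eqref{Der4} for $\overline u_m$ by the same divergence computation as in Lemma~\ref{parts2}. Moreover, choosing $\phi=\psi^2$ and using Young's inequality gives $\int z^{1-2s}\psi^2|\nabla\overline u_m|^2\le C\int z^{1-2s}\overline u_m^{\,2}|\nabla\psi|^2$, which is bounded uniformly in $m$ using only the $L^\infty(\mathcal B_2^+)$ convergence---no tail control needed. With these two repairs your argument goes through exactly as written; the verification that $(-\Delta)^s u=0$ on $\{u\ne0\}$, the weak $L^2_s$-compactness of $z^{(1-2s)/2}\nabla\overline u_m$, and the final squeeze are all handled the same way in the paper.
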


\begin{proof} First we observe that, for
any~$\phi\in C^\infty_0({\mathcal{B}}_{2}^+)$,
we have that
\begin{equation}\label{NN1}
\int_{ \R^{n+1}_+} z^{1-2s}
\nabla \overline u_m(X) \cdot \nabla (\phi\overline u_m) (X)\,dX=0.
\end{equation}
We point out that~\eqref{NN1}
does not follow directly from Lemma~\ref{parts},
since we do not assume that~$\overline u_m$
satisfies the necessary assumptions
(on the other hand, we will use the minimality condition).
More precisely, to prove \eqref{NN1}, we 
denote by~$U_m$
the extension of~$E_m$ according to~\eqref{extE},
and we set~$\tilde u_m:=
(1+\eps \phi)\overline u_m$, with~$|\eps|<1$ to be taken sufficiently small. We have that the positive set
of~$\tilde u_m$ coincide with the one of~$\overline u_m$,
and~$(\tilde u_m, U_m)$ is a competing pair with~$(\overline u_m,U_m)$
in Proposition~\ref{char}.

As a consequence, the minimality property of~$(\overline u_m,U_m)$
gives that
\begin{eqnarray*} 0&\le& \int_{ {\mathcal{B}}_2^+ } z^{1-2s} |
\nabla \tilde u_m(X)|^2\,dX -
\int_{ {\mathcal{B}}_2^+ } z^{1-2s} |
\nabla \overline u_m(X)|^2\,dX\\& =&
2\eps
\int_{ {\mathcal{B}}_2^+ } z^{1-2s} 
\nabla \overline u_m(X) \cdot \nabla (\phi\overline u_m)(X)\,dX+o(\eps),\end{eqnarray*}
which implies~\eqref{NN1}.

Now we check that~$\overline u$ satisfies~\eqref{4}
and~\eqref{44} (this will allow us to exploit Lemma~\ref{parts2}
in the sequel).
For this, we take~$p\in B_R$, with~$u(p)\ne0$.
So there exists~$r>0$ such that $u\ne0$ in $B_r(p)$.
By the uniform convergence, for~$m$ sufficiently large we have that~$u_m\ne0$
in~$B_r(p)$. Then, by minimality and Lemma \ref{SH}, we know that~$(-\Delta)^s u_m=0$ 
in~$B_r(p)$. So, by uniform convergence, we obtain that~$(-\Delta)^s u=0$
in the weak (and so in the strong) sense in~$B_r(p)$.
This shows that~$u$ satisfies~\eqref{4}.

Moreover, given any~$\psi\in C^\infty_0({\mathcal{B}}_{2}^+)$,
if we apply~\eqref{NN1} with~$\phi:=\psi^2$ we obtain that
$$ 0=\int_{ \R^{n+1}_+} 2 z^{1-2s}
\psi \,\overline{u}_m\,\nabla \overline u_m(X) \cdot \nabla \psi \,dX
+ \int_{ \R^{n+1}_+} z^{1-2s}
\psi^2 \,|\nabla \overline u_m|^2\,dX.$$
Thus, using Young inequality, we see that
$$ \int_{ \R^{n+1}_+} z^{1-2s}
\psi^2 \,|\nabla \overline u_m|^2\,dX \le C\,
\int_{ \R^{n+1}_+} z^{1-2s}
\overline u_m^2 \,|\nabla \psi|^2\,dX,$$
for some~$C>0$. In particular, fixing~$\psi$
with~$\psi=1$ in~${\mathcal{B}}_{2-(1/10)}^+$,
we obtain that
\begin{equation}\label{serena678id} 
\int_{ {\mathcal{B}}_{2-(1/10)}^+ } z^{1-2s}
|\nabla \overline u_m|^2\,dX \le C\,
\int_{ {\mathcal{B}}_{2}^+ } z^{1-2s}
\overline u_m^2 \,dX\le 1+C\,
\int_{ {\mathcal{B}}_{2}^+ } z^{1-2s}
\overline  u^2 \,dX,\end{equation}
for large~$m$, up to renaming~$C$. 
As a consequence, we may suppose that
\begin{equation}\label{dfgijjserena}
{\mbox{$z^{(1-2s)/2} \nabla \overline u_m$
converges to some~$\Phi$ weakly in~$L^2({\mathcal{B}}_{2-(1/10)}^+)$.}}\end{equation}
We claim that
\begin{equation}\label{6789hxhhxh}
\Phi=z^{(1-2s)/2} \nabla \overline u
\end{equation}
in the weak sense. Indeed, fixed any ball~$B\subset {\mathcal{B}}_{2-(1/10)}^+$
such that~$\overline B
\subset\R^{n+1}_+$, for any~$\Psi\in C^\infty_0(B,\R^n)$ we have that
$$ \int_B \,{\rm div}\, (\overline u_m\Psi)\,dX =0,$$
due to the Divergence Theorem, therefore
$$ \int_B \nabla \overline u_m\cdot\Psi\,dX
=\int_B \,{\rm div}\, ( \overline u_m\Psi)\,dX
-\int_B \overline u_m\,{\rm div}\,\Psi\,dX = -\int_B \overline u_m\,{\rm div}\,\Psi\,dX.$$
Also, by~\eqref{dfgijjserena}, we have that
$$ \lim_{m\to+\infty}\int_B \nabla \overline u_m\cdot\Psi\,dX
=\lim_{m\to+\infty}\int_{B} z^{(1-2s)/2}
\nabla \overline u_m\cdot(z^{(2s-1)/2}\Psi)\,dX =
\int_{B} \Phi\cdot(z^{(2s-1)/2}\Psi)\,dX.$$
On the other hand, by the uniform convergence of~$\overline u_m$,
we have that
$$ \lim_{m\to+\infty}
\int_B \overline u_m\,{\rm div}\,\Psi\,dX=
\int_B \overline u\,{\rm div}\,\Psi\,dX.$$
These observations imply that
$$ \int_{B} \Phi\cdot(z^{(2s-1)/2}\Psi)\,dX=
-\int_B \overline u\,{\rm div}\,\Psi\,dX,$$
that is~$\nabla\overline u=z^{(2s-1)/2}\Phi$ in~$B$, in the weak sense,
which concludes the proof of~\eqref{6789hxhhxh}.

{F}rom~\eqref{dfgijjserena} and~\eqref{6789hxhhxh}
we conclude that~$z^{(1-2s)/2} \nabla \overline u_m$
converges to~$z^{(1-2s)/2} \nabla \overline u$
weakly in~$L^2({\mathcal{B}}_{2-(1/10)}^+)$.
As a consequence, recalling~\eqref{serena678id}, we obtain that
\begin{eqnarray*}
&& \int_{ {\mathcal{B}}_{2-(1/10)}^+ } z^{1-2s}
|\nabla \overline u|^2\,dX 
= \lim_{m\rightarrow +\infty} \int_{ {\mathcal{B}}_{2-(1/10)}^+ } z^{1-2s}\Big(
|\nabla \overline u_m|^2-
|\nabla \overline u_m-\nabla\overline u|^2\Big)\,dX \\
&&\quad\le\lim_{m\rightarrow +\infty}\int_{ {\mathcal{B}}_{2-(1/10)}^+ } z^{1-2s}
|\nabla \overline u_m|^2\,dX 
\le
1+C\,
\int_{ {\mathcal{B}}_{2}^+ } z^{1-2s}
\overline  u^2 \,dX.
\end{eqnarray*}
This proves
that $\overline u$ satisfies~\eqref{44}
(up to renaming the radius of the ball).

Therefore we are in the position to apply Lemma~\ref{parts2}, which
gives that
$$ 
\int_{ {\mathcal{B}}_2^+ }
\overline u^2\, {\rm div}\,\Big(z^{1-2s} \nabla\phi\Big)\,dX=
2\int_{ {\mathcal{B}}_2^+ } z^{1-2s}\,\phi\,|\nabla \overline u|^2\,dX,
$$
for any~$\phi\in C^\infty_0({\mathcal{B}}_2)$ that is even in~$z$.
On the other hand, \eqref{NN1} implies that
$$ \int_{ {\mathcal{B}}_2^+ }
\overline u_m^2\, {\rm div}\,\Big(z^{1-2s} \nabla\phi\Big)\,dX=
2\int_{ {\mathcal{B}}_2^+ } z^{1-2s}\,\phi\,|\nabla \overline u_m|^2\,dX, $$
for any~$\phi\in C^\infty_0({\mathcal{B}}_2)$ that is even in~$z$. 
As a consequence, if we take~$\eps>0$ and~$\phi$ with image in~$[0,1]$,
such that~$\phi=1$ in~${\mathcal{B}}_{1}$
and~$\phi=0$ outside~${\mathcal{B}}_{1+\eps}$, we obtain that
\begin{eqnarray*}
&& \lim_{m\rightarrow+\infty}
 2\int_{ {\mathcal{B}}_{1}^+ } z^{1-2s}\,|\nabla \overline u_m|^2\,dX
\le \lim_{m\rightarrow+\infty}
2\int_{ {\mathcal{B}}_2^+ } z^{1-2s}\,\phi\,|\nabla \overline u_m|^2\,dX
\\ &&\qquad=
\lim_{m\rightarrow+\infty} \int_{ {\mathcal{B}}_2^+ }
\overline u_m^2\, {\rm div}\,\Big(z^{1-2s} \nabla\phi\Big)\,dX
=\int_{ {\mathcal{B}}_2^+ }
\overline u^2\, {\rm div}\,\Big(z^{1-2s} \nabla\phi\Big)\,dX
\\ &&\qquad=2\int_{ {\mathcal{B}}_{1+\eps}^+ } z^{1-2s}
\,\phi\,|\nabla \overline u|^2\,dX
.\end{eqnarray*}
Since~$\eps$ can be taken as small as we like, we obtain
\begin{equation}\label{L.01.0}
\lim_{m\rightarrow+\infty}
2\int_{ {\mathcal{B}}_{1}^+ } z^{1-2s}\,|\nabla \overline u_m|^2\,dX
\le
2\int_{ {\mathcal{B}}_{1}^+ } z^{1-2s}
\,|\nabla \overline u|^2\,dX.
\end{equation}
On the other hand,
if we take~$\eps>0$ and~$\phi$ with image in~$[0,1]$,
such that~$\phi=1$ in~${\mathcal{B}}_{1-\eps}$
and~$\phi=0$ outside~${\mathcal{B}}_{1}$,
the argument above gives
\begin{eqnarray*}
&& \lim_{m\rightarrow+\infty}
2\int_{ {\mathcal{B}}_{1}^+ } z^{1-2s}\,|\nabla \overline u_m|^2\,dX
\ge \lim_{m\rightarrow+\infty}
2\int_{ {\mathcal{B}}_2^+ } z^{1-2s}\,\phi\,|\nabla \overline u_m|^2\,dX
\\ &&\qquad=
\lim_{m\rightarrow+\infty} \int_{ {\mathcal{B}}_2^+ }
\overline u_m^2\, {\rm div}\,\Big(z^{1-2s} \nabla\phi\Big)\,dX
=
\int_{ {\mathcal{B}}_2^+ }
\overline u^2\, {\rm div}\,\Big(z^{1-2s} \nabla\phi\Big)\,dX
\\ &&\qquad=
2\int_{ {\mathcal{B}}_2^+ } z^{1-2s}\,\phi\,|\nabla \overline u|^2\,dX
\ge2\int_{ {\mathcal{B}}_{1-\eps}^+ } z^{1-2s}
\,|\nabla \overline u|^2\,dX,
\end{eqnarray*}
and so, taking~$\eps$ as small as we like, 
$$ \lim_{m\rightarrow+\infty}
2\int_{ {\mathcal{B}}_{1}^+ } z^{1-2s}\,|\nabla \overline u_m|^2\,dX
\ge
2\int_{ {\mathcal{B}}_{1}^+ } z^{1-2s}
\,|\nabla \overline u|^2\,dX.$$
This and \eqref{L.01.0} complete the proof of Lemma \ref{yys6}.
\end{proof}

Now we can complete the proof of Theorem \ref{prop:conv_ext}:

\begin{proof}[Proof of Theorem~\ref{prop:conv_ext}] 
The first relation in~\eqref{TESI}
is a direct consequence of Lemma~\ref{yys6}.
As for the second, it follows as in Proposition~9.1 of~\cite{CRS}
(using Lemma~\ref{6df7u8gihojgfff55}
to control the fractional perimeter in Theorem~3.3 of~\cite{CRS}).
This completes the proof of~\eqref{TESI}.

Now, in order to show that~$(\overline{u},U)$ is a minimizing pair in~$\mathcal{B}^+_{1/2}$, 
we take 
\begin{equation}\label{compe}
{\mbox{a competitor $(\overline{v},V)$ for~$(\overline{u},U)$
in ${\mathcal{B}}_{1/2}^+$,}} \end{equation}
according to Proposition~\ref{char}, 
and we claim that 
\begin{equation}\begin{split}\label{se20}
&\int_{\mathcal{B}^+_{1/2}}z^{1-2s}|\nabla\overline{u}|^2\,dX +c_{n,s,\sigma}\,
\int_{\mathcal{B}^+_{1/2}}z^{1-\sigma}|\nabla U|^2\,dX\\
&\qquad \le \int_{\mathcal{B}^+_{1/2}}z^{1-2s}|\nabla\overline{v}|^2\,dX +c_{n,s,\sigma}\,\int_{\mathcal{B}^+_{1/2}}z^{1-\sigma}|\nabla V|^2\,dX.
\end{split}\end{equation}
For this, we use Lemma~\ref{LE conv} (with~$(\overline{u}_1,U_1):=(\overline{v},V)$ 
and~$(\overline{u}_2,U_2):=(\overline{u}_m,U_m)$) 
to find a pair~$(\overline{v}_m,V_m)$ 
such that~$\overline{v}_m=\overline{u}_m$ and~$V_m=U_m$ in a neighborhood of~$\partial\mathcal{B}^+_{3/2}$. Hence, $(\overline{v}_m,V_m)$ is a competitor 
for~$(\overline{u}_m,U_m)$ in~$\mathcal{B}^+_{3/2}$ according to Proposition~\ref{char}, and so 
\begin{equation}\begin{split}\label{se90}
&\int_{\mathcal{B}^+_{3/2}}z^{1-2s}|\nabla\overline{u}_m|^2\,dX +c_{n,s,\sigma}\,
\int_{\mathcal{B}^+_{3/2}}z^{1-\sigma}|\nabla U_m|^2\,dX\\
&\qquad \le \int_{\mathcal{B}^+_{3/2}}z^{1-2s}|\nabla\overline{v}_m|^2\,dX +c_{n,s,\sigma}\,\int_{\mathcal{B}^+_{3/2}}z^{1-\sigma}|\nabla V_m|^2\,dX,
\end{split}\end{equation}
since~$(\overline{u}_m,U_m)$ is a minimizing pair. 

Moreover, thanks to~\eqref{A2} and~\eqref{AA2}, we have that 
\begin{equation}\begin{split}\label{se91} 
&\int_{\mathcal{B}^+_{3/2}}z^{1-2s}|\nabla\overline{v}_m|^2\,dX 
+c_{n,s,\sigma}\, \int_{\mathcal{B}^+_{3/2}}z^{1-\sigma}|\nabla 
V_m|^2\,dX\\ &\qquad \le 
\int_{\mathcal{B}^+_{1}}z^{1-2s}|\nabla\overline{v}|^2\,dX 
+c_{n,s,\sigma}\, \int_{\mathcal{B}^+_{1}}z^{1-\sigma}|\nabla V|^2\,dX 
\\ &\qquad\qquad + 
\int_{\mathcal{B}^+_{3/2}}z^{1-2s}|\nabla\overline{u}_m|^2\,dX 
+c_{n,s,\sigma}\, \int_{\mathcal{B}^+_{3/2}}z^{1-\sigma}|\nabla 
U_m|^2\,dX\\ &\qquad\qquad 
-\int_{\mathcal{B}^+_{1}}z^{1-2s}|\nabla\overline{u}_m|^2\,dX 
-c_{n,s,\sigma}\, \int_{\mathcal{B}^+_{1}}z^{1-\sigma}|\nabla U_m|^2\,dX 
+ c_m(\epsilon), \end{split}\end{equation} 
where 
\begin{eqnarray*} 
c_m(\epsilon)&:=& 
C\,\epsilon^{-2}\int_{\mathcal{B}^+_{1+\epsilon}\setminus\mathcal{B}^+_{1-\epsilon}} 
z^{1-2s}|\overline{v}-\overline{u}_m|^2\,dX 
+C\,\int_{\mathcal{B}^+_{1+\epsilon}\setminus\mathcal{B}^+_{1-\epsilon}}z^{1-2s}\left(|\nabla\overline{v}|^2+|\nabla\overline{u}_m|^2\right)\,dX 
\\ &&\quad 
+C\,\int_{\mathcal{B}^+_{1+\epsilon}\setminus\mathcal{B}^+_{1-\epsilon}} 
z^{1-\sigma}|\nabla\phi|^2|V-U_m|^2\,dX 
+C\,\int_{\mathcal{B}^+_{1+\epsilon}\setminus\mathcal{B}^+_{1-\epsilon}}z^{1-\sigma}\left(|\nabla 
V|^2+|\nabla U_m|^2\right)\,dX, \end{eqnarray*} 
with~$\phi:=\phi_\eps$ as in Lemma~\ref{lemma:phi}.
Putting 
together~\eqref{se90} and~\eqref{se91}, we obtain that 
\begin{equation}\begin{split}\label{se92} 
&\int_{\mathcal{B}^+_{1}}z^{1-2s}|\nabla\overline{u}_m|^2\,dX 
+c_{n,s,\sigma}\, \int_{\mathcal{B}^+_{1}}z^{1-\sigma}|\nabla 
U_m|^2\,dX\\ &\qquad \le 
\int_{\mathcal{B}^+_{1}}z^{1-2s}|\nabla\overline{v}|^2\,dX 
+c_{n,s,\sigma}\, \int_{\mathcal{B}^+_{1}}z^{1-\sigma}|\nabla V|^2\,dX 
+ c_m(\epsilon).\end{split}\end{equation} Now we take the limit 
as~$m\rightarrow +\infty$ in~\eqref{se92}. Thanks to~\eqref{TESI}
(which has been already proved), we have that  
\begin{equation}\label{se50} 
{\mbox{the left-hand side converges to }} 
\int_{\mathcal{B}^+_{1}}z^{1-2s}|\nabla\overline{u}|^2\,dX 
+c_{n,s,\sigma}\, \int_{\mathcal{B}^+_{1}}z^{1-\sigma}|\nabla U|^2\,dX. 
\end{equation} 
Now we compute the limit 
of~$c_m(\epsilon)$ as $m\to+\infty$, for a fixed~$\eps>0$
(and then send~$\eps\to0$ at the end). For this, we first observe 
that~$\overline{v}=\overline{u}$ and~$V=U$ 
outside~$\mathcal{B}^+_{1/2}$, thanks to~\eqref{compe},
and so~$c_m(\epsilon)$ can be written as 
\begin{equation}\begin{split}\label{se60} c_m(\epsilon)&=\, 
C\,\epsilon^{-2}\int_{\mathcal{B}^+_{1+\epsilon}\setminus\mathcal{B}^+_{1-\epsilon}} 
z^{1-2s}|\overline{u}-\overline{u}_m|^2\,dX 
+C\,\int_{\mathcal{B}^+_{1+\epsilon}\setminus\mathcal{B}^+_{1-\epsilon}}z^{1-2s}\left(|\nabla\overline{u}|^2+|\nabla\overline{u}_m|^2\right)\,dX 
\\ &\quad 
+C\,\int_{\mathcal{B}^+_{1+\epsilon}\setminus\mathcal{B}^+_{1-\epsilon}} 
z^{1-\sigma}|\nabla\phi|^2|U-U_m|^2\,dX 
+C\,\int_{\mathcal{B}^+_{1+\epsilon}\setminus\mathcal{B}^+_{1-\epsilon}}z^{1-\sigma}\left(|\nabla 
U|^2+|\nabla U_m|^2\right)\,dX. \end{split}\end{equation} Now we claim 
that \begin{equation}\label{se70} 
\int_{\mathcal{B}^+_{1+\epsilon}\setminus\mathcal{B}^+_{1-\epsilon}} 
z^{1-2s}|\overline{u}-\overline{u}_m|^2\,dX\rightarrow 0 \ {\mbox{ and 
}} \ 
\int_{\mathcal{B}^+_{1+\epsilon}\setminus\mathcal{B}^+_{1-\epsilon}} 
z^{1-\sigma}|\nabla\phi|^2|U-U_m|^2\,dX \rightarrow 0, \end{equation} 
as~$m\rightarrow +\infty$ (for a fixed~$\eps>0$). Indeed, the first 
limit follows from~\eqref{se00}. As for the second limit, we observe 
that $|U_m|\le1$, since~$U_m$ is obtained by convolution between a 
characteristic function and the Poisson kernel which has integral~$1$. 
Hence also~$|U|\le1$ in~$\mathcal{B}^+_2$. This means that, for a 
fixed~$\eps>0$, $$ z^{1-\sigma}|\nabla\phi|^2|U-U_m|^2 \le 
4z^{1-\sigma}|\nabla\phi|^2,$$ and this function lies 
in~$L^1(\R^{n+1}_+)$, thanks to~\eqref{o-04}, applied here 
with~$\beta:=1-\sigma\in(0,1)$.
Moreover, for a fixed~$\eps>0$,
we have that~$z^{1-\sigma}|\nabla\phi|^2|U-U_m|^2\to0$
as~$m\to+\infty$. Then the second limit in~\eqref{se70} follows from
the Dominated Convergence Theorem.
This completes the proof of~\eqref{se70}.

Now, we claim that 
\begin{equation}\label{se80}
\lim_{m\rightarrow +\infty}\int_{\mathcal{B}^+_{1+\epsilon}\setminus\mathcal{B}^+_{1-\epsilon}}z^{1-2s}\left(|\nabla\overline{u}|^2+|\nabla\overline{u}_m|^2\right)\,dX\le C\, \int_{\mathcal{B}^+_{1+2\epsilon}\setminus\mathcal{B}^+_{1-2\epsilon}}z^{1-2s}|\nabla\overline{u}|^2\,dX,
\end{equation}
for a suitable~$C>0$. For this, we observe that~$\mathcal{B}^+_{1+\epsilon}\setminus\mathcal{B}^+_{1-\epsilon}$ can be covered 
by
a finite overlapping family of~$N_\epsilon$ balls of radius~$\epsilon$, say~$\mathcal{B}_\epsilon(X_j)$ with~$j=1,\ldots N_\epsilon$, and so 
\begin{eqnarray*}\int_{\mathcal{B}^+_{1+\epsilon}\setminus\mathcal{B}^+_{1-\epsilon}}
z^{1-2s}
|\nabla\overline{u}_m|^2\,dX&\le& 
\int_{\cup_{j=1}^{N_\epsilon}\mathcal{B}_\epsilon(X_j)}z^{1-2s}
|\nabla\overline{u}_m|^2\,dX 
\\&\le& C\,\sum_{j=1}^{N_\epsilon}\int_{\mathcal{B}_\epsilon(X_j)}z^{1-2s}
|\nabla\overline{u}_m|^2\,dX.\end{eqnarray*}
By using \eqref{TESI} once again, this implies that 
\begin{eqnarray*}\lim_{m\rightarrow +\infty}
\int_{\mathcal{B}^+_{1+\epsilon}\setminus\mathcal{B}^+_{1-\epsilon}}z^{1-2s}
|\nabla\overline{u}_m|^2\,dX \le C\,\sum_{j=1}^{N_\epsilon}
\int_{\mathcal{B}_\epsilon(X_j)}z^{1-2s}
|\nabla\overline{u}|^2\,dX
\\ \le C\,\int_{\mathcal{B}^+_{1+2\epsilon}\setminus\mathcal{B}^+_{1-2\epsilon}}z^{1-2s}|\nabla\overline{u}|^2\,dX,
\end{eqnarray*}
which shows~\eqref{se80} up to renaming constants. 

Analogously, one can prove that 
\begin{equation}\label{se81}
\lim_{m\rightarrow +\infty}\int_{\mathcal{B}^+_{1+\epsilon}\setminus\mathcal{B}^+_{1-\epsilon}}z^{1-\sigma}
\left(|\nabla U|^2+|\nabla U_m|^2\right)\,dX\le C\, \int_{\mathcal{B}^+_{1+2\epsilon}\setminus\mathcal{B}^+_{1-2\epsilon}}z^{1-\sigma}|\nabla U|^2\,dX,
\end{equation}
for some~$C>0$. 
Using~\eqref{se70}, \eqref{se80} and~\eqref{se81} into~\eqref{se60}, we get 
$$ \lim_{m\rightarrow +\infty}c_m(\epsilon)\le C\, \int_{\mathcal{B}^+_{1+2\epsilon}\setminus\mathcal{B}^+_{1-2\epsilon}}z^{1-2s}
|\nabla\overline{u}|^2\,dX + C\, \int_{\mathcal{B}^+_{1+2\epsilon}\setminus\mathcal{B}^+_{1-2\epsilon}}z^{1-\sigma}
|\nabla U|^2\,dX,$$
up to relabelling constants. 
Hence, sending~$\epsilon\rightarrow 0$, we obtain that 
$$ \lim_{\eps\to0} \ \lim_{m\rightarrow +\infty}c_m(\epsilon)=0 .$$ 
Using this and~\eqref{se50} into~\eqref{se92}, we obtain that 
\begin{eqnarray*}
&&\int_{\mathcal{B}^+_{1}}z^{1-2s}|\nabla\overline{u}|^2\,dX +c_{n,s,\sigma}\,
\int_{\mathcal{B}^+_{1}}z^{1-\sigma}|\nabla U|^2\,dX\\
&&\qquad \le \int_{\mathcal{B}^+_{1}}z^{1-2s}|\nabla\overline{v}|^2\,dX +c_{n,s,\sigma}\,
\int_{\mathcal{B}^+_{1}}z^{1-\sigma}|\nabla V|^2\,dX,
\end{eqnarray*}
which implies that~$(\overline{u},U)$ is a minimizing pair in~$
\mathcal{B}^+_{1/2}$, according to Proposition~\ref{char}. 
This shows~\eqref{se20} and concludes the proof of Theorem~\ref{prop:conv_ext}.
\end{proof}
 
\section{Limit of the blow-up sequences and proof of Theorem~\ref{CONI 0}}\label{CONI 0:sec}

Here we show that the blow-up limit of a minimizing pair
is a minimizing cone and prove Theorem~\ref{CONI 0}.

\begin{proof}[Proof of Theorem~\ref{CONI 0}] First of all, we notice that, for
any~$x$, $\tilde x\in\R^n$,
\begin{equation}\label{sd77ssf8www}
|u_r(x)-u_r(\tilde x)|= r^{\frac\sigma2-s}
|u(rx)-u(r\tilde x)|\le 
\|u\|_{C^{s-\frac\sigma2}(\R^n)} |x-\tilde x|^{s-\frac\sigma2}.\end{equation}
This shows that~$u_r\in C^{s-\frac\sigma2}(\R^n)$,
with norm bounded uniformly in~$r$. So, up to a subsequence,
we may assume that
\begin{equation}\label{d7hwer8ffffjw}
{\mbox{$u_r$ converges locally uniformly
to some~$u_0\in C^{s-\frac\sigma2}(\R^n)$.}}\end{equation}
We observe that $u\ge0$ in~$E$ and~$u\le0$ in~$E^c$: thus,
since~$0\in\partial E$, we have that~$u(0)=0$. As a consequence~$u_r(0)=0$
and therefore, by~\eqref{sd77ssf8www},
\begin{equation}\label{sd77ssf8www-BIS}
|u_r(x)|\le \|u\|_{C^{s-\frac\sigma2}(\R^n)}
|x|^{s-\frac\sigma2},\end{equation}
and so
\begin{equation}\label{tdyfugsddsdsqq1}
|u_0(x)|
\le \|u\|_{C^{s-\frac\sigma2}(\R^n)}|x|^{s-\frac\sigma2}.\end{equation}
Since~$(u_r,E_r)$ is a minimizing pair in~$B_{1/r}$, we can fix any~$R>0$,
take~$r\in(0,1/(4R))$ and use Theorem~\ref{ENERGY}: we obtain that
\begin{eqnarray*}&& \iint_{\R^{2n}\setminus(B_R^c)^2} \frac{\big| u_r(x)-u_r(y)\big|^2
}{|x-y|^{n+2s}}\,dx\,dy +
\Per_\sigma(E_r,B_R)\\&&\qquad \le C\,\left(1+
\int_{\R^n}
\frac{|u_r(y)|^2}{1+|y|^{n+2s}}\,dy\right) \le C,\end{eqnarray*}
for some~$C>0$, possibly different from step to step,
where~\eqref{sd77ssf8www-BIS} was used in the last passage.
In particular, we have that~$\Per_\sigma(E_r,B_R)$
is bounded uniformly in~$r$. By compactness,
this shows that, up to a subsequence,~$E_{r}$
converges in~$L^1_{\rm loc}(\R^n)$ to some~$E_0$.

Now, let~$\overline u_r$ and~$U_r$ be the extension
functions of~$u_r$ and~$E_r$, as in~\eqref{ext} and~\eqref{extE}. 
Similarly, let~$\overline u_0$ and~$U_0$ be the extension
functions of~$u_0$ and~$E_0$.

By~\eqref{sd77ssf8www-BIS}
and~\eqref{tdyfugsddsdsqq1}, if we fix~$\rho>0$ and we take~$x\in B_\rho$ and~$z\in(0,\rho)$,
we have that
$$ P_s(y,z) \ |u_r(x-y)-u_0(x-y)|\le
\frac{2c_{n,s}\|u\|_{C^{s-\frac{\sigma}{2}}(\R^n)}\rho^{2s} (\rho+|y|)^{s-\frac\sigma2}}{
|y|^{n+2s}}.$$
This implies that
$$ \int_{\R^n\setminus B_1} P_s(y,z) \ |u_r(x-y)-u_0(x-y)|\,dy <+\infty$$
and therefore for any fixed~$\eps>0$ there exists~$R:=R_{\rho,\eps}>0$
such that
$$ \int_{\R^n\setminus B_R} P_s(y,z) \ |u_r(x-y)-u_0(x-y)|\,dy \le\eps.$$
Consequently,
for any~$\rho>0$ and any~$x\in B_\rho$ and~$z\in(0,\rho)$, we have that
\begin{eqnarray*}
|\overline u_r(x,z)-\overline u_0(x,z)|&\le&
\int_{B_R} P_s(y,z) \ \|u_r-u_0\|_{L^\infty(B_{R+\rho})}\,dy
+\eps \\
&\le& \|u_r-u_0\|_{L^\infty(B_{R+\rho})}+\eps.
\end{eqnarray*}
That is
$$ \|\overline u_r-\overline u_0\|_{L^\infty(B_\rho\times(0,\rho))}
\le \|u_r-u_0\|_{L^\infty(B_{R+\rho})}+\eps$$
and therefore, by \eqref{d7hwer8ffffjw}, 
$$ \lim_{r\to0}\|\overline u_r-\overline u_0\|_{L^\infty(B_\rho\times(0,\rho))}
\le \eps.$$
Since~$\eps>0$ may be taken arbitrarily small, we infer that
$$ \lim_{r\to0}\|\overline u_r-\overline u_0\|_{L^\infty(B_\rho\times(0,\rho))}=0,$$
hence~$\overline u_r$ converges locally uniformly to~$\overline u_0$.

Moreover, as in Proposition~9.1 in~\cite{CRS}, we have that~$U_r$
converges, up to subsequence, to some~$U_0$ locally in~$L^2_{\sigma/2}$.
These observations give that~\eqref{se00}
is satisfied in this case.
Now we claim that~$\overline u_0$ is continuous on~$\overline{\R^{n+1}_+}$.
For this, we take a sequence~$(x_k,z_k)\in \R^{n+1}_+$,
with~$(x_k,z_k)\to(x,z)\in\overline{\R^{n+1}_+}$ as~$k\to+\infty$.
We have
\begin{eqnarray*} && \overline u_0(x_k,z_k)=
\int_{\R^n} P_s(y,z_k) \, u_0(x_k-y)\,dy
\\ &&\qquad= \int_{\R^n} z_k^{-n} P_s(z_k^{-1} y,1) \, u_0(x_k-y)\,dy
=\int_{\R^n} P_s(\tilde y,1) \, u_0(x_k-z_k\tilde y)\,d\tilde y.\end{eqnarray*}
Now we observe that
$$ \lim_{k\to+\infty} u_0(x_k-z_k\tilde y)=u_0(x-z\tilde y),$$
due to~\eqref{d7hwer8ffffjw}. Also, by~\eqref{tdyfugsddsdsqq1},
$$ P_s(\tilde y,1) \, |u_0(x_k-z_k\tilde y)|\le
\frac{c_{n,s}\, \|u\|_{C^{s-\frac\sigma2}(\R^n)}
(1+|x|+z|\tilde y|)^{s-\frac\sigma2}.
}{(1+|\tilde y|^2)^{\frac{n+2s}{2}}} ,$$
for~$k$ large, which is integrable in~$\tilde y\in\R^n$. Accordingly, by the
Dominated Convergence Theorem,
$$ \lim_{k\to+\infty} \overline u_0(x_k,z_k)=
\int_{\R^n} P_s(\tilde y,1) \, u_0(x-z\tilde y)\,d\tilde y=
\overline u_0(x,z),$$
that proves the continuity of~$\overline u_0$ in~$\overline{\R^{n+1}_+}$.

Therefore, we can use Theorem~\ref{prop:conv_ext}
and obtain that~$(\overline u_0,U_0)$ is a minimizing pair. Thus, by
Proposition~\ref{char}, we have that~$(u_0,E_0)$ is a minimizing pair.

It remains to show that~$(u_0,E_0)$ is homogeneous
(hence it is a minimizing cone). For this, we recall \eqref{Phi} and we use~\eqref{TESI}
to see that
$$ \lim_{r\to0} \Phi_{u_r} (t)=
\Phi_{u_0}(t).$$
This and~\eqref{p15} give that
$$ \lim_{r\to0} \Phi_{u} (rt)=
\Phi_{u_0}(t).$$
That is
$$ \Phi_{u_0}(t)=\lim_{\tau\to0} \Phi_{u} (\tau),$$
and this limit exists since~$\Phi_u$ is monotone (recall
Theorem~\ref{TH:mon}).
In particular, $\Phi_{u_0}$ is constant and so, by Theorem~\ref{TH:mon},
we have that~$(u_0, E_0)$ is homogeneous. This completes the proof of
Theorem~\ref{CONI 0}.
\end{proof}

\section{A maximum principle in unbounded domains for the fractional Laplacian and proof of Theorems~\ref{TRI}
and~\ref{MAX PLE}}\label{sec6}

The purpose of this section is to prove Theorems \ref{MAX PLE}
and \ref{TRI}.

\begin{proof}[Proof of Theorem \ref{MAX PLE}] 
First we observe that 
\begin{equation}\label{HA 1}
{\mbox{$(-\Delta)^s v^+\le0$ in the whole of~$\R^n$}}\end{equation}
in the viscosity sense. To check this, let~$\phi$ be a competing
function touching~$v^+$ from above at~$p$.

If~$v^+(p)>0$, then~$p\in D$, since~$v^+=0$ outside~$D$.
Notice also that~$\phi\ge v^+\ge v$
and~$\phi(p)=v^+(p)=v(p)$, thus~$\phi$ touches~$v$ from above at~$p$,
and therefore, by~\eqref{eq-0}, we obtain that
$(-\Delta)^s\phi(p)\le0$. 

On the other hand, if~$v^+(p)=0$, then we have that~$\phi\ge v^+\ge0$
and~$\phi(p)=v^+(p)=0$, which gives directly that
$$ \int_{\R^n}\frac{\phi(p+y)+\phi(p-y)-2\phi(p)
}{|y|^{n+2s}}\,dy \ge 0.$$
This proves~\eqref{HA 1}.

Now we show that
\begin{equation}\label{HA 2}
{\mbox{$v^+$ vanishes identically.}}
\end{equation}
Suppose not, then we can define
$$ A:=\sup_{\R^n} v^+ \in (0,+\infty).$$
So we fix any~$q=(q',q_n)\in\R^n$ such that~$v^+(q)>0$.
Notice that~$q_n>0$ since~$v^+=0$ in~$\{x_n\le0\}$.
So we can set~$r:=2q_n>0$ and~$\tilde q:=(q',\,-r/4)$, and we remark that
$$ B_{r/4}(\tilde q)\subseteq B_r(q)\cap \{x_n\le0\}.$$
Accordingly,
$$ \big| B_r(q)\cap \{v^+\le0\} \big|\ge
\big| B_r(q)\cap \{x_n\le0\} \big|\ge \big|B_{r/4}(\tilde q)\big|
\ge \delta r^n\,$$
for some universal~$\delta>0$. So we are in the position of
applying a Harnack-type inequality (see e.g. Corollary~4.5
in~\cite{SIHA}) and we conclude that~$v^+\le (1-\gamma)A$
in~$B_{r/2}(q)$, and so, in particular~$v^+(q)\le(1-\gamma)A$,
for some~$\gamma\in(0,1)$. As a consequence, since~$q$ is arbitrary,
$$ A=\sup_{q\in \{v^+>0\}} v^+(q)\le (1-\gamma)A,$$
which is a contradiction.
This proves~\eqref{HA 2} which in turn implies Theorem~\ref{MAX PLE}.
\end{proof}

As a consequence of Theorem \ref{MAX PLE}
we have the following classification result:

\begin{corollary}\label{T cone}
Let $A>0$. Let $E$ be a cone in $\R^n$ such that
\begin{equation}\label{8.8}
E \subseteq \{x_n>0\}.\end{equation}
Let $u\in C^2(E)$ and
continuous on $\overline E$, with $[u]_{C^\gamma(E)}<+\infty$ 
for some~$\gamma\in(0,s]$.
Assume that
$$ \left\{ \begin{matrix}
(-\Delta)^s u\le0 & {\mbox{ in $E$,}}\\
u\ge0 & {\mbox{ in $E$,}}\\
u\le0 & {\mbox{ in $E^c$.}}
\end{matrix}\right. $$
Then
\begin{equation}\label{9.9} u(x)\le C_A (x_n)_+^s\end{equation}
for any $x\in E\cap \{x_n\le A\}$ with
\begin{equation}\label{9.9.9}
C_A := A^{\gamma-s}\,[u]_{C^\gamma(E)}.\end{equation}
Also, if $u$ is homogeneous of degree $\alpha<s$ then $u$ vanishes identically in $E$.
\end{corollary}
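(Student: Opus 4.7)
The plan is to use $w(x):=C_A(x_n)_+^s$ as an $s$-harmonic barrier in the halfspace $\{x_n>0\}$ and to apply the maximum principle in unbounded domains from Theorem~\ref{MAX PLE} in the slab-like open set $D:=E\cap\{x_n<A\}$. The normalization of $C_A$ is dictated by the requirement that $w$ dominate $u$ on the ``top face'' $E\cap\{x_n\ge A\}$ of $D^c$. As a preliminary step, I would establish a Hölder-type decay of $u$ near $\partial E$: since $u$ is continuous on $\overline E$ with $u\ge 0$ on $E$ and $u\le 0$ on $E^c$, one has $u\equiv 0$ on $\partial E$; the hypothesis $E\subseteq\{x_n>0\}$ then forces $\mathrm{dist}(x,E^c)\le x_n$ for every $x\in E$ (since $(x',0)\in E^c$), hence
\begin{equation*}
u(x)\le [u]_{C^\gamma(E)}\,x_n^\gamma \quad\text{for every }x\in E.
\end{equation*}

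Next I would set $v:=u-w$ on $D$ and verify the hypotheses of Theorem~\ref{MAX PLE}. The preliminary bound above ensures $u\le [u]_{C^\gamma(E)} A^\gamma$ on $D$, so $v\in L^\infty(D)$; the $C^2$ regularity of $v$ in $D$ and its continuity on $\overline D$ are inherited from $u$ and from $w\in C^\infty(\{x_n>0\})$. Since $(-\Delta)^s w\equiv 0$ in $\{x_n>0\}\supset D$, one gets $(-\Delta)^s v=(-\Delta)^s u\le 0$ on $D$. Finally, $v\le 0$ on $D^c$: on $E^c$ this follows from $u\le 0$ and $w\ge 0$, while on $E\cap\{x_n\ge A\}$ the preliminary bound combined with $\gamma-s\le 0$ and $x_n\ge A$ gives
\begin{equation*}
u(x)\le [u]_{C^\gamma(E)}\,x_n^{\gamma-s}\,x_n^s\le [u]_{C^\gamma(E)}\,A^{\gamma-s}\,x_n^s=w(x).
\end{equation*}
Theorem~\ref{MAX PLE} then yields $v\le 0$ on $D$, which is exactly \eqref{9.9}.

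For the homogeneity statement, I would exploit the scaling invariance of \eqref{9.9}. Homogeneity of degree $\alpha$ reads $u(x)=\lambda^{-\alpha}u(\lambda x)$ for every $\lambda>0$ and $x\in E$. Given $x\in E$, choose $\lambda>0$ so small that $\lambda x_n\le A$, and apply \eqref{9.9} to the rescaled point $\lambda x\in E\cap\{x_n\le A\}$:
\begin{equation*}
u(x)=\lambda^{-\alpha}u(\lambda x)\le C_A\,\lambda^{-\alpha}(\lambda x_n)^s=C_A\,\lambda^{s-\alpha}\,x_n^s.
\end{equation*}
Since $\alpha<s$, letting $\lambda\searrow 0$ forces $u(x)\le 0$, and combined with $u\ge 0$ on $E$ this gives $u\equiv 0$. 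The main conceptual point of the proof is the precise identification of the barrier constant $C_A$: the Hölder seminorm only controls $u$ at scale $x_n^\gamma$, which is weaker than the $s$-harmonic profile for small $x_n$, but $C_A(x_n)^s$ dominates $[u]_{C^\gamma(E)} x_n^\gamma$ exactly in the regime $x_n\ge A$---precisely where exterior data are needed to feed into Theorem~\ref{MAX PLE}---and this is made to hold with equality at $x_n=A$ by the choice $C_A=A^{\gamma-s}[u]_{C^\gamma(E)}$.
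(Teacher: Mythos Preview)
Your proof is correct and follows essentially the same approach as the paper: the same barrier $C_A(x_n)_+^s$, the same domain $D=E\cap\{x_n<A\}$, the same verification that $v=u-C_A(x_n)_+^s$ satisfies the hypotheses of Theorem~\ref{MAX PLE} (using the H\"older decay $u(x)\le[u]_{C^\gamma(E)}x_n^\gamma$ on $E$ and the choice of $C_A$ to control $v$ on the top face), and the same scaling argument $u(x)=\lambda^{-\alpha}u(\lambda x)$ with $\lambda\searrow0$ for the homogeneous case. The only cosmetic difference is that the paper locates the boundary point where $u$ vanishes as $(x',\tau)\in\partial E$ for some $\tau\in[0,x_n)$, whereas you phrase it via $\mathrm{dist}(x,E^c)\le x_n$; these amount to the same observation.
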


\begin{proof} First
we focus on the proof of \eqref{9.9}. For this, we first observe that
\begin{equation}\label{17.7bis}
{\mbox{for every
$x\in E$,
$u(x)\le [u]_{C^\gamma(E)}x_n^\gamma$.}}\end{equation} 
Indeed,
by \eqref{8.8},
for any $x=(x',x_n)\in E$ there exists $\tau\in [0,x_n)$ such that
$y:=(x',\tau)\in\partial E$. Therefore $u(y)=0$ and so
$$ u(x)=u(x)-u(y)\le 
[u]_{C^\gamma(E)}|x-y|^\gamma = [u]_{C^\gamma(E)} |x_n-\tau|^\gamma \le [u]_{C^\gamma(E)} x_n^\gamma,$$
that establishes \eqref{17.7bis}. In particular, we have that
\begin{equation}\label{17.7}
{\mbox{for every
$x\in E\cap\{x_n\le A\}$,
$u(x)\le 
A^\gamma [u]_{C^\gamma(E)}
$.}}\end{equation} 

So we
define $v(x):=u(x)- C_A (x_n)_+^s$, with $C_A$ as in \eqref{9.9.9}.
By \cite{Dyda}, we know that $(-\Delta)^s (x_n)_+^s=0$ in $\{x_n>0\}$, therefore
$(-\Delta)^s v\le0$ in $D:=E\cap\{x_n\le A\}$.

Moreover, if $x\in E^c$ we have that $v(x)\le - C_A (x_n)_+^s\le 0$,
and if $x\in E\cap \{x_n>A\}$ we have that
\begin{eqnarray*}
v(x)&\le & [u]_{C^\gamma(E)}x_n^\gamma -C_A x_n^s=
x_n^\gamma\left([u]_{C^\gamma(E)}-C_A x_n^{s-\gamma}\right)\\
&\le & x_n^\gamma\left([u]_{C^\gamma(E)}-C_A A^{s-\gamma}\right) \le 0,
\end{eqnarray*}
thanks to \eqref{17.7bis} and~\eqref{9.9.9} (recall also that~$\gamma\le s$).
As a consequence $v(x)\le 0$ for any $x\in E^c \cup (E\cap \{x_n>A\})=
\big(E\cap (E\cap \{x_n>A\})^c\big)^c = (E\cap\{x_n\le A\})^c=D^c$.
Also $v\in L^\infty(D)$ thanks to \eqref{17.7}.
So we can apply
Theorem \ref{MAX PLE} and obtain that $v\le0$ in $D$, which is \eqref{9.9}.

Now we establish
the second claim in the statement of Corollary
\ref{T cone}. For this we suppose
in addition that
$u$ is homogeneous of degree $\alpha<s$: then, fix any $x\in E$
and any $A>x_n$. By \eqref{9.9} we have
$$ u(x)= t^{-\alpha} u(tx)\le C_A t^{s-\alpha} (x_n)_+^s$$
for any $t\in(0,1)$, hence, by taking $t\rightarrow0$ the second claim
of Corollary \ref{T cone} follows.
\end{proof}

\begin{proof}[Proof of Theorem \ref{TRI}]
We make some preliminary observations. 
First, we notice that if~$u$ vanishes identically then 
the thesis trivially follows. Therefore, we can suppose that~$u\neq 0$,
and so 
\begin{equation}\label{esiste omega}
{\mbox{there exists~$\omega\in S^{n-1}$ such that~$u(\omega)\neq 0$.}} 
\end{equation}

Now, we claim that~$s-\frac{\sigma}{2}\ge 0$. 
For this, we observe that~$u\in C^\gamma(\R^n)$, in particular it 
belongs to~$C^\gamma(B_2)$. Therefore, from Weierstra{\ss}'s theorem, 
we have that~$u$ is bounded in~$B_2$. On the other hand,~$u$ 
is homogeneous of degree~$s-\frac{\sigma}{2}$, and so 
\begin{equation}\label{ghghghghgh}
u(rx)=r^{s-\frac{\sigma}{2}}u(x) 
\end{equation}
for any~$x\in B_2$ and~$r\in(0,1]$. Since~$x,rx\in B_2$, we have that 
both~$u(x)$ and~$u(rx)$ are bounded. Therefore, sending~$r\searrow 0$ in~\eqref{ghghghghgh}, we obtain that~$s-\frac{\sigma}{2}\ge0$. 

Now, if~$s-\frac{\sigma}{2}=0$, then~$u=c$ for some constant~$c\in\R$. 
Then, the claim of the theorem easily follows: 
indeed, for instance, if the positivity set~$E$ is contained in a halfspace then~$u=c\le 0$.

Hence, from now on we assume that
\begin{equation}\label{s sigma}
s-\frac{\sigma}{2}>0. \end{equation} 
We prove that
\begin{equation}\label{uzerozero}
u(0)=0.
\end{equation} 
Indeed, since~$u$ is homogeneous of degree~$s-\frac{\sigma}{2}$, 
we have that 
$$ u(0)=r^{s-\frac{\sigma}{2}}u(0) $$ 
for any~$r>0$, which implies~\eqref{uzerozero}. 

Now, we recall that~$u\in C^\gamma(\R^n)$ and we prove that
\begin{equation}\label{gamma}
\gamma=s-\frac{\sigma}{2}. 
\end{equation} 
For this, we take~$\omega$ as in~\eqref{esiste omega} and we obtain that, for any~$r>0$,   
$$ |u(r\omega)-u(0)|\le [u]_{C^\gamma(\R^n)}r^\gamma. $$
On the other hand, 
$$ |u(r\omega)-u(0)|=|u(r\omega)|=r^{s-\frac{\sigma}{2}}|u(\omega)|,$$
thanks to~\eqref{uzerozero}. 
Therefore, 
\begin{equation}\label{gfbnjfrla}
r^{s-\frac{\sigma}{2}}|u(\omega)|\le [u]_{C^\gamma(\R^n)}r^\gamma. 
\end{equation} 
Since~$|u(\omega)|\neq0$ (recall~\eqref{esiste omega}), this implies that 
\begin{equation}\label{jkhg}
r^{s-\frac{\sigma}{2}-\gamma}\le \frac{[u]_{C^\gamma(\R^n)}}{|u(\omega)|}=:C_1, 
\end{equation}
for a suitable positive constant~$C_1$. 
Moreover,~$u$ is not identically a constant, thanks to~\eqref{s sigma}, 
and so~$[u]_{C^\gamma(\R^n)}\neq0$. Hence,~\eqref{gfbnjfrla} implies that 
\begin{equation}\label{saeqe}
r^{\gamma-s+\frac{\sigma}{2}}\ge\frac{|u(\omega)|}{[u]_{C^\gamma(\R^n)}}=:C_2, 
\end{equation}
for some constant~$C_2>0$. 
Now, if~$\gamma<s-\frac{\sigma}{2}$ then, we send~$r$ to~$+\infty$ in~\eqref{jkhg} 
and we obtain a contradiction. If~$\gamma>s-\frac{\sigma}{2}$, we send~$r\searrow0$ in~\eqref{saeqe} and we reach again a contradiction. 
This proves~\eqref{gamma}. 
    
Now, we prove the first claim in Theorem \ref{TRI}
(the proof of the second claim is similar, and then the
last claim clearly follows).
For this, we suppose, up to a rigid motion, that
$E\subseteq\{x_n>0\}$ and we show that $u\le0$. So we
assume, by contradiction, that
$$ E^+ := \{ u>0\} \ne \varnothing.$$
By construction, $u\le 0$ in $E^c\supseteq\{x_n\le0\}$,
therefore $E^+ \subseteq \{x_n>0\}$. Also, by Lemma \ref{SH},
$(-\Delta)^s u=0$ in $E^+$, and $u\le0$ outside $E^+$. 
Moreover,~$[u]_{C^{s-\frac{\sigma}{2}}(\R^n)}<+\infty$, thanks to~\eqref{gamma}.
Therefore, by the second claim in Corollary \ref{T cone},
we obtain that $u$ vanishes identically in $E^+$, hence $u^+$
is identically zero, and so $u\le0$.
\end{proof}

\section{Functions, sets and proof of Remark~\ref{R}}\label{RR}

We observe that the scaling properties in~\eqref{rescaled}
suggest that when~$s=\sigma/2$, homogeneous functions
of degree zero play a crucial role for the problem.

This may lead to the conjecture that, at least in this case,
a minimizing pair~$(u,E)$ reduces to the set~$E$ itself,
i.e.~$u=\chi_E-\chi_{E^c}$, provided that the boundary data allow such configuration 
(notice that when~$s=\sigma/2$ then~$s\in(0,1/2)$ and so the 
Gagliardo seminorm of the characteristic function
of a smooth set is finite, 
thus the energy is also well defined).

The content of Remark~\ref{R} is that this is not true.

\begin{proof}[Proof of Remark~\ref{R}]
Suppose by contradiction that~$u=\chi_E-\chi_{E^c}$, with~$E\ne\varnothing$
and~$E^c\ne\varnothing$, that is, in the measure theoretic sense,
\begin{equation}\label{6t7yccc7c77c7cff}
{\mbox{$|E|>0$ and $|E^c|>0$.}}
\end{equation}
Notice that either~$|E\cap B_1|>0$ or~$|E^c\cap B_1|>0$.
So, for concreteness, we may suppose that~$|E\cap B_1|>0$.
As a consequence, there exists~$r\in (0,1)$ such that
\begin{equation}\label{6789sssjjsgsggagga}
|E\cap B_r|>0.
\end{equation}
Let now~$R\in(r,1)$ and~$\tau\in C^\infty_0(B_R,[0,1])$, with
\begin{equation}\label{isIJJSf}
{\mbox{$\tau=1$
in~$B_r$.}}\end{equation}
For any~$t\in[0,1)$, let~$u_t(x):=(1-t\tau(x))u(x)$.

We observe that~$u_0=u$. In addition,~$u_t=u$ outside~$B_1$.
Also~$1-t\tau(x)\ge 1-t>0$, hence the sign of~$u$ is the same
as the one of~$u_t$. As a consequence,
the pair~$(u_t,E)$ is admissible,
hence~${\mathcal{F}}(u,E)\le{\mathcal{F}}(u_t,E)$ by minimality.
Accordingly
\begin{equation} \label{6789000}
\begin{split}
0\,&\le {\mathcal{F}}(u_t,E)-{\mathcal{F}}(u,E)\\
&=
\iint_{\R^{2n}\setminus
(B_1^c)^2}\frac{|u_t(x)-u_t(y)|^2-|u(x)-u(y)|^2}{|x-y|^{n+2s}}\, dx\,dy
\\&=
\iint_{\R^{2n}}\frac{|u_t(x)-u_t(y)|^2-|u(x)-u(y)|^2}{|x-y|^{n+2s}}\, dx\,dy   
.\end{split}\end{equation}
Notice that
\begin{eqnarray*}
&& |u_t(x)-u_t(y)|^2\\
&=& (1-t\tau(x))^2 u^2(x)+(1-t\tau(y))^2 u^2(y)
-2(1-t\tau(x))\,(1-t\tau(y))\,u(x)\,u(y) \\
&=& |u(x)-u(y)|^2 \\
&& +2t\Big[ -\tau(x) u^2(x)-\tau(y) u^2(y)
+(\tau(x)+\tau(y))\,u(x)\,u(y)\Big] \\
&& +t^2\Big|
\tau(x)\,u(x)-\tau(x)\,u(x)\Big|^2.
\end{eqnarray*}
By inserting this into~\eqref{6789000} and dividing by~$2t$
we thus obtain
$$ 0\le
\iint_{\R^{2n}}\frac{
-\tau(x) u^2(x)-\tau(y) u^2(y)
+(\tau(x)+\tau(y))\,u(x)\,u(y)}{|x-y|^{n+2s}}\, dx\,dy+\Xi t,$$
for some~$\Xi\in\R$ depending on~$u$ and~$\tau$ but independent
of~$t$. Hence we may send~$t\searrow0$
and we conclude that
\begin{equation}\label{fghjscs77sss1}
0 \le \iint_{\R^{2n}}\frac{
-\tau(x) u^2(x)-\tau(y) u^2(y)
+(\tau(x)+\tau(y))\,u(x)\,u(y)}{|x-y|^{n+2s}}\, dx\,dy 
.\end{equation}
Now, if either~$(x,y)\in E\times E$ or~$(x,y)\in E^c\times E^c$
we have that~$u^2(x)=u^2(y)=u(x)u(y)=1$ hence the integrand
in~\eqref{fghjscs77sss1} vanishes.
Hence, since the role of~$x$ and~$y$ is symmetric,
we obtain from~\eqref{fghjscs77sss1} that
\begin{eqnarray*}
0 &\le& 2\iint_{E\times E^c}\frac{
-\tau(x) u^2(x)-\tau(y) u^2(y)
+(\tau(x)+\tau(y))\,u(x)\,u(y)}{|x-y|^{n+2s}}\, dx\,dy
\\
&=&2\iint_{E\times E^c}\frac{
-\tau(x) -\tau(y) 
-(\tau(x)+\tau(y))}{|x-y|^{n+2s}}\, dx\,dy
\\ &=&-4\iint_{E\times E^c}\frac{
\tau(x)+\tau(y)}{|x-y|^{n+2s}}\, dx\,dy
.\end{eqnarray*}
Since the integrand above is nonnegative, recalling~\eqref{6t7yccc7c77c7cff}
we infer that~$\tau(x)+\tau(y)=0$ for a.e.~$(x,y)\in E\times E^c$.

As a consequence, $\tau(x)=0$ for a.e.~$x\in E$,
and so, in particular, for a.e.~$x\in E\cap B_r$.
This set has indeed positive measure, thanks to~\eqref{6789sssjjsgsggagga},
hence we get that there exists~$p\in E\cap B_r$ such that~$\tau(p)=0$.
But this is in contradiction with~\eqref{isIJJSf}
and thus it proves Remark~\ref{R}.
\end{proof}

\section{Removable singularities and proof of Remark~\ref{Remo}}\label{ydd88syhh}

In this section we give the simple proof of Remark~\ref{Remo}.
As a matter of
fact, we stress that Remark~\ref{Remo} only aims at pointing out
the possible development of plateau in a simple, concrete
example, using as little technology as possible
(more general results may be obtained
by capacity considerations, and with the use
of the fundamental solution of the fractional Laplacian
when~$n\ge2s$).

\begin{proof}[Proof of Remark~\ref{Remo}]
Assume that
\begin{equation}\label{-91}
\{u=0\}\cap (-1,1)\subseteq \{p_1,\dots,p_N\}.\end{equation}
We show that~$(-\Delta)^{1/2} u=0$ in~$(-1,1)$.

For this, we take~$\bar u$ to be the harmonic extension of~$u$
in~$\R^2_+$. Namely~$\bar u$ has finite~$H^1(\R^2_+)$-seminorm
and satisfies
\begin{equation}\label{7dddj2j2a}
\left\{\begin{matrix} \Delta \bar u (x,y)=0 & {\mbox{for any $x\in\R$
and any $y>0$,}}\\
\bar u(x,0)=u(x) & {\mbox{for any $x\in\R$.}}
\end{matrix}\right.\end{equation}
By Lemma~\ref{SH} and~\eqref{-91} we have that~$(-\Delta)^{1/2}u(x)=0$
for any~$x\in\R\setminus \{p_1,\dots,p_N\}$, hence
\begin{equation}\label{-92}
\partial_y \bar u(x,0)=0 {\mbox{ for any }}
x\in\R\setminus \{p_1,\dots,p_N\}.
\end{equation}
Now we take the even symmetric extension of~$\bar u$, that is, we define
\begin{equation*}
u^*(x,y):=
\left\{\begin{matrix} \bar u(x,y)=0 & {\mbox{for any $x\in\R$
and any $y\ge0$,}}\\
\bar u(x,-y)=0 & {\mbox{for any $x\in\R$
and any $y<0$.}} \end{matrix}\right.\end{equation*}
We observe that
\begin{equation*}
\Delta u^*=0 {\mbox{ for any~$(x,y)\in\R^2\setminus\{
(p_1,0),\dots,(p_N,0)
\}$.}}
\end{equation*}
Therefore, by the
removal of singularities result for harmonic functions, we conclude that~$
\Delta u^*=0$ in the whole of~$\R^2$ and therefore~$\partial_y u^*$
is continuous also in the vicinity of~$(p_1,0),\dots,(p_N,0)$.
This implies that~$\partial_y \bar u(x,0)=0$ 
for any~$x\in(-1,1)$,
which means~$(-\Delta)^{1/2} u=0$ in~$(-1,1)$.
\end{proof}

\appendix

\section{Regularity of cones in the plane
and proof of Theorem~\ref{prop:reg}}\label{temp}

This section is devoted to the regularity of the two-dimensional cones. Namely,
in order to prove Theorem~\ref{prop:reg}, we follow the methods introduced 
in~\cite{SV1, SV2} to prove the regularity of~$\sigma$-minimal surfaces 
and used in~\cite{CSV} to obtain the regularity of the minimizers 
of the functional~\eqref{caso1}. 

We first introduce some notations. 
We define, for any~$r>0$, 
\begin{equation}\label{math}
\mathcal E_r(\overline{v},V):=\int_{\mathcal B_r^+}z^{1-2s}\,|\nabla\overline{v}|^2\, dX 
+ c_{n,s,\sigma}\int_{\mathcal B_r^+}z^{1-\sigma}\, |\nabla V|^2\,dX. 
\end{equation}
We consider a cutoff function~$\varphi\in C^{\infty}(\R)$ such that  
$$ \varphi=1 {\mbox{ in }}[-1/2,1/2] {\mbox{ and }} \varphi=0 {\mbox{ outside }} (-3/4,3/4), $$ 
and, for any~$R>0$, we introduce the following diffeomorphism in~$\R^{n+1}_+$, defined for every~$X\in\R^{n+1}_+$ as 
\begin{equation}\label{diffeo}
X\mapsto Y:=X+\varphi\left(\frac{|X|}{R}\right)\, e_1. 
\end{equation}
Then, we define 
$$ \overline{u}^+_R(Y):=\overline{u}(X) \quad  {\mbox{ and }}\quad  U^+_R(Y):=U(X). $$ 
We may also define~$\overline{u}^-_R$ and~$U^-_R$ by simply changing~$e_1$ 
into~$-e_1$ in~\eqref{diffeo}. 

The argument that we perform is similar to the one of Proposition~6.2 
in~\cite{CSV}. The main difference here is that the two terms involved in 
the functional~\eqref{math} are defined in the extension, and therefore 
we have to consider domain variations in~$\R^{n+1}_+$ both 
for~$\overline{u}$ and for~$U$.

First we prove an estimate for the second variation of 
the energy~$\mathcal E_R$.

\begin{lemma}\label{lemma:est}
Let~$(\overline{u},U)$ be a minimizer of~$\mathcal E_R$. Suppose 
that~$\overline{u}$ and~$U$ are homogeneous of degree~$s-\frac{\sigma}{2}$ 
and~$0$, respectively. Then, there exists a constant~$C>0$ independent of~$R$ 
such that 
$$ \mathcal E_R(\overline{u}^+_R,U^+_R)+ \mathcal E_R(\overline{u}^-_R,U^-_R)- 
2\mathcal E_R(\overline{u},U)\le C\, R^{n-2-\sigma}. $$
\end{lemma}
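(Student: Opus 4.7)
The plan is to pull both energies $\mathcal{E}_R(\overline{u}^\pm_R,U^\pm_R)$ back to integrals over $\mathcal{B}_R^+$ in the variable $X$ via the diffeomorphisms $\Psi_\pm(X):=X\pm\varphi(|X|/R)\,e_1$, then use the homogeneity of $\overline{u}$ and $U$ to bound the resulting second-order remainder. First I would note that $\Psi_\pm$ preserves the $z$-coordinate (it moves only in the $e_1$ direction), equals the identity outside $\{|X|<3R/4\}$, and is a smooth diffeomorphism of $\mathcal{B}_R^+$ onto itself for $R$ large, since $\nabla\bigl[\varphi(|\cdot|/R)\bigr]=O(1/R)$. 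Setting $v:=\nabla\bigl[\varphi(|\cdot|/R)\bigr]$, the differential is the rank-one perturbation $D\Psi_\pm=I\pm e_1\otimes v$, with $\det D\Psi_\pm=1\pm v_1$ and $(D\Psi_\pm)^{-T}=I\mp(1\pm v_1)^{-1}v\otimes e_1$, so the change of variables is entirely explicit and the weights $z^{1-2s}$ and $z^{1-\sigma}$ pull back trivially.

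Next I would compute the pulled-back integrands. For the $\overline{u}$-piece one obtains
$$z^{1-2s}(1\pm v_1)|\nabla\overline{u}|^2 \mp 2z^{1-2s}\,\overline{u}_{x_1}(v\cdot\nabla\overline{u}) + \frac{z^{1-2s}\,\overline{u}_{x_1}^{\,2}\,|v|^2}{1\pm v_1},$$
and an analogous expression for the $U$-piece with $z^{1-\sigma}$ and $U$ in place of $z^{1-2s}$ and $\overline{u}$. Summing the $+$ and $-$ versions, the terms linear in $v$ are odd in $v$ and cancel, while $(1+v_1)+(1-v_1)=2$ and $\tfrac{1}{1+v_1}+\tfrac{1}{1-v_1}=\tfrac{2}{1-v_1^2}$ yield
$$\mathcal{E}_R(\overline{u}^+_R,U^+_R)+\mathcal{E}_R(\overline{u}^-_R,U^-_R)-2\mathcal{E}_R(\overline{u},U)=2\int_{\mathcal{B}_R^+}\frac{z^{1-2s}\,\overline{u}_{x_1}^{\,2}\,|v|^2}{1-v_1^2}\,dX+2c_{n,s,\sigma}\int_{\mathcal{B}_R^+}\frac{z^{1-\sigma}\,U_{x_1}^{\,2}\,|v|^2}{1-v_1^2}\,dX.$$

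Finally I would estimate both integrals. The vector $v$ is supported in the annular region $\{R/2\le|X|\le 3R/4\}$ with $|v|\le C/R$, and $1-v_1^2\ge 1/2$ for $R$ large. The homogeneity of $\overline{u}$ of degree $s-\tfrac{\sigma}{2}$ gives $|\nabla\overline{u}(X)|\le C|X|^{s-\sigma/2-1}$ pointwise (with $C$ controlled by the angular profile, which has finite weighted energy on the unit half-sphere), and similarly $|\nabla U(X)|\le C|X|^{-1}$. Passing to spherical coordinates $X=\rho\omega$, $z=\rho\omega_{n+1}$, each radial factor reduces to $\int_{R/2}^{3R/4}\rho^{n-1-\sigma}\,d\rho=O(R^{n-\sigma})$ (both for $z^{1-2s}|\nabla\overline{u}|^2\lesssim\rho^{1-2s}\omega_{n+1}^{1-2s}\rho^{2s-\sigma-2}$ and for $z^{1-\sigma}|\nabla U|^2\lesssim\rho^{1-\sigma}\omega_{n+1}^{1-\sigma}\rho^{-2}$), and multiplying by the $R^{-2}$ coming from $|v|^2$ gives exactly the announced bound $CR^{n-2-\sigma}$.

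The main obstacle is purely bookkeeping: verifying that $\Psi_\pm$ is a diffeomorphism of $\mathcal{B}_R^+$ onto itself preserving $z$ so the weights pull back trivially, and checking carefully the algebraic cancellation of the odd-in-$v$ terms in the rank-one expansion. Once these are in place, the two powers of $1/R$ that remain in the quadratic term combine with the natural scaling induced by homogeneity of degrees $s-\tfrac{\sigma}{2}$ and $0$ to produce precisely the exponent $n-2-\sigma$.
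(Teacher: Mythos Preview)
Your proposal is correct and follows the same approach as the paper: the rank-one change-of-variables computation with cancellation of the odd-in-$v$ terms is precisely the identity the paper imports from \cite{SV1}, and the homogeneity/scaling estimate on the annulus $\{R/2\le|X|\le 3R/4\}$ is identical. The only slip is the claimed pointwise bound $|\nabla\overline{u}(X)|\le C|X|^{s-\sigma/2-1}$, which need not hold (the angular profile is a priori only weighted-$L^2$ on the half-sphere, not $L^\infty$); however, your spherical-coordinate computation in fact only uses that $z^{1-2s}|\nabla\overline{u}|^2$ and $z^{1-\sigma}|\nabla U|^2$ are homogeneous of degree $-1-\sigma$ with finite integral over $\mathcal{B}_1^+$, which is exactly how the paper phrases it and which suffices.
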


\begin{proof} 
By direct computations (see formula~(11) in~\cite{SV1}), one can prove that 
\begin{eqnarray*}
z^{1-2s}\left(|\nabla\overline{u}^+_R|^2+|\nabla\overline{u}^-_R|^2\right)\,dY &=& 2\,z^{1-2s}\left(1+O(1/R^2)\,\chi_{\mathcal B^+_R\setminus\mathcal B^+_{R/2}}\right) |\nabla\overline{u}|^2\, dX, \\
z^{1-\sigma}\left(|\nabla U^+_R|^2+|\nabla U^-_R|^2\right)\,dY &=& 2\,z^{1-\sigma}\left(1+O(1/R^2)\,\chi_{\mathcal B^+_R\setminus\mathcal B^+_{R/2}}\right) |\nabla U|^2\, dX.
\end{eqnarray*}
Therefore 
\begin{eqnarray*}
&&\int_{\mathcal B^+_R}z^{1-2s}\left(|\nabla\overline{u}^+_R|^2+|\nabla\overline{u}^-_R|^2\right)\,dY -2\int_{\mathcal B^+_R}z^{1-2s}\, |\nabla\overline{u}|^2\, dX \\
&&\qquad \le C\, R^{-2}\int_{\mathcal B^+_R\setminus\mathcal B^+_{R/2}} z^{1-2s}\,|\nabla\overline{u}|^2\, dX.
\end{eqnarray*}
Now, since~$\overline{u}$ is homogeneous of degree~$s-\frac{\sigma}{2}$, 
we have that~$z^{1-2s}\,|\nabla\overline{u}|^2$ is homogeneous of degree~$-1-\sigma$, and so 
\begin{equation}\label{111}
\int_{\mathcal B^+_R}z^{1-2s}\left(|\nabla\overline{u}^+_R|^2+|\nabla\overline{u}^-_R|^2\right)\,dY -2\int_{\mathcal B^+_R}z^{1-2s}\, |\nabla\overline{u}|^2\, dX \le C\, R^{-2}\cdot R^{n-\sigma}. 
\end{equation} 
Similarly, $U$ is homogeneous of degree~$0$, and therefore~$z^{1-\sigma}\,|\nabla U|^2$ is homogeneous of degree~$-1-\sigma$. Hence
\begin{eqnarray*}
&& c_{n,s,\sigma}\int_{\mathcal B^+_R}z^{1-\sigma}\left(|\nabla U^+_R|^2+|\nabla U^-_R|^2\right)\,dY -2c_{n,s,\sigma}\int_{\mathcal B^+_R}z^{1-\sigma}\,|\nabla U|^2\, dX\\
&&\qquad \le C\, R^{-2}\int_{\mathcal B^+_R\setminus\mathcal B^+_{R/2}} z^{1-\sigma}\,|\nabla U|^2\, dX \le C R^{-2}\cdot R^{n-\sigma}. 
\end{eqnarray*} 
By summing up this and~\eqref{111}, we obtain the thesis (recall~\eqref{math}).
\end{proof}

\begin{corollary}
Let~$(\overline{u},U)$ be a minimizer of~$\mathcal E_R$. Suppose 
that~$\overline{u}$ and~$U$ are homogeneous of degree~$s-\frac{\sigma}{2}$ 
and~$0$, respectively. Then, there exists a constant~$C>0$ independent of~$R$ 
such that 
$$ \mathcal E_R(\overline{u}^+_R,U^+_R)\le 
\mathcal E_R(\overline{u},U)+ C\, R^{n-2-\sigma}. $$

In particular, if~$n=2$, we have 
\begin{equation}\label{aggiunto}
\mathcal E_R(\overline{u}^+_R,U^+_R)\le 
\mathcal E_R(\overline{u},U)+ \frac{C}{R^{\sigma}}. 
\end{equation}
\end{corollary}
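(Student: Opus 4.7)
The plan is to combine the second-variation-type estimate of Lemma~\ref{lemma:est} with the minimality of $(\overline u, U)$, exploiting the symmetry between the $+e_1$ and $-e_1$ perturbations. First I would verify that $(\overline u^-_R, U^-_R)$ is an admissible competitor for $(\overline u, U)$ in~$\mathcal B_R^+$ in the sense of Proposition~\ref{char}. Since $\varphi(|X|/R)=0$ for $|X|\ge 3R/4$, the diffeomorphism $X\mapsto X-\varphi(|X|/R)\,e_1$ is the identity outside $\mathcal{B}^+_{3R/4}\subset\mathcal{B}^+_R$; consequently $\overline u^-_R=\overline u$ and $U^-_R=U$ in a neighborhood of $\partial\mathcal B^+_R$, so conditions~i) and~iii) of Proposition~\ref{char} are met on the outer boundary. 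Moreover, because $\varphi$ depends only on $|X|$ and the translation $e_1\in\R^n$ has no $z$-component, the diffeomorphism preserves the hyperplane~$\{z=0\}$, sending the trace datum $\chi_E-\chi_{E^c}$ of $U$ to $\chi_F-\chi_{F^c}$ for~$F$ the horizontal image of $E$, with the sign conditions on $\overline u^-_R\big|_{\{z=0\}}$ inherited pointwise from those of $\overline u$. Hence items~i), ii), iii) of Proposition~\ref{char} are all satisfied.

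Given this admissibility, minimality of $(\overline u, U)$ yields
\begin{equation*}
\mathcal E_R(\overline u,U)\le \mathcal E_R(\overline u^-_R,U^-_R).
\end{equation*}
Adding $\mathcal E_R(\overline u^+_R, U^+_R)$ to both sides and invoking Lemma~\ref{lemma:est} gives
\begin{equation*}
\mathcal E_R(\overline u^+_R,U^+_R)+\mathcal E_R(\overline u,U)
\le \mathcal E_R(\overline u^+_R,U^+_R)+\mathcal E_R(\overline u^-_R,U^-_R)
\le 2\mathcal E_R(\overline u,U)+C R^{n-2-\sigma},
\end{equation*}
so that $\mathcal E_R(\overline u^+_R,U^+_R)\le \mathcal E_R(\overline u,U)+C R^{n-2-\sigma}$, which is the first claim. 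The second claim is then immediate by specializing to $n=2$, in which case $R^{n-2-\sigma}=R^{-\sigma}$.

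The only delicate point I anticipate is the admissibility check: one must make sure that the competitor $(\overline u^-_R,U^-_R)$ produced by the horizontal domain variation still satisfies the positivity-sign coupling required by item~iii) of Proposition~\ref{char}, i.e.\ that the trace of $\overline u^-_R$ on $\{z=0\}$ has the correct sign on each of $F$ and~$F^c$. This is where one uses crucially that the diffeomorphism is purely horizontal (no $z$-motion) and is a bijection of $\R^n$ onto itself on $\{z=0\}$, so positivity sets transform consistently. Once this is granted, no further computation is needed, since the bulk of the work has already been done in Lemma~\ref{lemma:est}.
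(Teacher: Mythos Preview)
Your proposal is correct and follows essentially the same approach as the paper: use minimality of $(\overline u,U)$ against the competitor $(\overline u^-_R,U^-_R)$ via Proposition~\ref{char}, then combine with Lemma~\ref{lemma:est}. You supply more detail than the paper on why $(\overline u^-_R,U^-_R)$ is admissible (the paper simply invokes Proposition~\ref{char} without elaboration), and your observation that the diffeomorphism is purely horizontal and hence preserves both $\{z=0\}$ and the sign coupling is exactly the point needed.
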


\begin{proof}
Thanks to Proposition~\ref{char}, the minimality of~$(\overline{u},U)$ gives 
$$ \mathcal E_R(\overline{u},U) \le \mathcal E_R(\overline{u}^-_R,U^-_R). $$
From this and Lemma~\ref{lemma:est} we get the desired claim. 
\end{proof}

\begin{proof}[Proof of Theorem~\ref{prop:reg}] 
We follow the line of the proof of Proposition~6.2 in~\cite{CSV}. 
For the sake of completeness we repeat the proof here.

We suppose that~$n=2$ and we argue by contradiction, 
assuming that~$E$ is not a halfplane. 
Hence, we can find a point~$p\in B_M$, for some~$M>0$, say on the~$e_2$-axis, 
such that~$p$ lies in the interior of~$E$ but~$p+e_1$ and~$p-e_1$ lie in~$E^c$. 
Therefore, recalling the notation introduced at the beginning of this section, 
we have that, for~$R>4M$, 
\begin{equation}\begin{split}\label{not}
& \overline{u}^+_R(X)=\overline{u}(X-e_1)\quad  {\mbox{ for any }}X\in\mathcal B^+_{2M}, \\
& U^+_R(X)=U(X-e_1) \quad {\mbox{ for any }}X\in\mathcal B^+_{2M}, \\
&\overline{u}^+_R(X)=\overline{u}(X) \quad {\mbox{ for any }}X\in\R^3_+\setminus\mathcal B^+_{R}, \\
& U^+_R(X)=U(X) \quad {\mbox{ for any }}X\in\R^3_+\setminus\mathcal B^+_{R}.
\end{split}\end{equation} 
Now, we define
\begin{eqnarray*}
&& \overline{v}_R(X):=\min\{\overline{u}(X),\overline{u}^+_R(X)\}, \quad \overline{w}_R(X):=\max\{\overline{u}(X),\overline{u}^+_R(X)\},\\
&& V_R(X):=\min\{U(X),U^+_R(X)\} \quad {\mbox{ and }} \quad W_R(X):=\max\{U(X),U^+_R(X)\}. 
\end{eqnarray*}
Moreover, we set~$P:=(p,0)\in\R^3$. 
We claim that 
\begin{eqnarray}
&& U^+_R<W_R=U {\mbox{ in a neighborhood of~$P$}}\label{prima}\\
{\mbox{and }}&& U<W_R=U^+_R {\mbox{ in a neighborhood of~$P+e_1$.}}\label{seconda}
\end{eqnarray}
Indeed, by~\eqref{not}
\begin{eqnarray*}
&& U^+_R(P)=U(P-e_1)=\left(\chi_E-\chi_{E^c}\right)(p-e_1)=-1, \\
&& U(P)=\left(\chi_E-\chi_{E^c}\right)(p)=1, \\
&& U^+_R(P+e_1)=U(P)=1 \\
{\mbox{and }}&& U(P+e_1)=\left(\chi_E-\chi_{E^c}\right)(p+e_1)=-1. 
\end{eqnarray*}
Then, the claim follows from the continuity of the functions~$U$ and~$U^+_R$ 
at~$P$ and~$P+e_1$. 

By Proposition~\ref{char}, the minimality of~$(\overline{u},U)$ gives 
$$ \mathcal E_R(\overline{u},U)\le \mathcal E_R(\overline{v}_R,V_R). $$ 
Moreover, we have that 
$$  \mathcal E_R(\overline{v}_R,V_R)+\mathcal E_R(\overline{w}_R,W_R)= \mathcal E_R(\overline{u},U)+\mathcal E_R(\overline{u}^+_R,U^+_R). $$
Therefore
\begin{equation}\label{222}
\mathcal E_R(\overline{w}_R,W_R)\le \mathcal E_R(\overline{u}^+_R,U^+_R).
\end{equation}

Now, we prove that~$(\overline{w}_R,W_R)$ is not a minimizer for~$\mathcal E_{2M}$ 
with respect to compact perturbations 
in~$\mathcal B^+_{2M}\times\mathcal B^+_{2M}$. 
Indeed, if~$(\overline{w}_R,W_R)$ was a minimizer, 
then~$W_R$ would be a minimizer for the~$\sigma$-perimeter, 
thanks to Proposition~\ref{char} 
(one can fix~$\overline{w}_R$ and perturb only~$W_R$, and notice 
that compact perturbations inside~$\mathcal B^+_{2M}\times\mathcal B^+_{2M}$ 
do not touch the trace). 
On the other hand, from the definition of~$W_R$ we have that~$U\le W_R$. 
Moreover,~$U$ and~$W_R$ satisfy the same equation in~$\mathcal B^+_{2M}\times\mathcal B^+_{2M}$. 
Hence, \eqref{prima} and the strong maximum principle imply that~$U=W_R$ 
in~$\mathcal B^+_{2M}$, which is a contradiction to~\eqref{seconda}.  
Therefore, there exists~$\delta>0$ and a competitor~$(\overline{u}_*,U_*)$ 
that coincides with~$(\overline{w}_R,W_R)$ outside~$\mathcal B^+_{2M}\times\mathcal B^+_{2M}$ (actually we take~$\overline{u}_*=\overline{w}_R$) 
and such that 
$$ \mathcal E_{2M}(\overline{u}_*,U_*)+\delta\le\mathcal E_{2M}(\overline{w}_R,W_R). $$ 
Notice that~$\delta$ does not depend on~$R$, since~$(\overline{w}_R,W_R)$ 
does not depend on~$R$ in~$\mathcal B^+_{2M}\times\mathcal B^+_{2M}$, thanks 
to~\eqref{not}. 
Since~$(\overline{u}_*,U_*)$ agrees with~$(\overline{w}_R,W_R)$ 
outside~$\mathcal B^+_{2M}\times\mathcal B^+_{2M}$, we conclude that 
$$ \mathcal E_R(\overline{u}_*,U_*)+\delta\le\mathcal E_R(\overline{w}_R,W_R). $$ 
From this, \eqref{aggiunto} and~\eqref{222} we obtain 
$$ \mathcal E_R(\overline{u}_*,U_*)+\delta \le \mathcal E_R(\overline{w}_R,W_R) \le \mathcal E_R(\overline{u}^+_R,U^+_R) \le \mathcal E_R(\overline{u},U)+C\, R^{-\sigma}. $$ 
Therefore, if~$R$ is large enough, we have that 
$$ \mathcal E_R(\overline{u}_*,U_*)<\mathcal E_R(\overline{u},U), $$ 
and this is a contradiction to the minimality of~$(\overline{u},U)$. 
\end{proof}


\begin{thebibliography}{1}
\setlinespacing{0.98}
\frenchspacing

\bibitem{Allen} {\sc M. Allen, E. Lindgren, A. Petrosyan}: 
The two phase fractional obstacle problem. 
{\it SIAM J. Math. Anal.} {\bf 47} (2015), no. 3, 1879--1905.

\bibitem{alt} {\sc H. W. Alt, L. A. Caffarelli}:
Existence and regularity for a minimum problem with free boundary.
{\it J. Reine Angew. Math.} {\bf 325} (1981), 105--144.

\bibitem{alt2} {\sc H. W. Alt, L. A. Caffarelli, A. Friedman}:
Variational problems with two phases and their free boundary.
{\it Trans. Am. Math. Soc.} {\bf 282} (1984), no. 2, 431--461.

\bibitem{ADM} {\sc L. Ambrosio, G. De Philippis, L. Martinazzi}: $\Gamma$-convergence of nonlocal perimeter functionals.
{\it Manuscripta Math.} {\bf 134} (2011), 377--403.

\bibitem{salsa} {\sc I. Athanasopoulos, L. A. Caffarelli, C. Kenig, S. Salsa}:
An area-Dirichlet integral minimization problem. 
{\it Commun. Pure Appl. Math.} {\bf 54} (2001), no. 4, 479--499.

\bibitem{BBM} {\sc J. Bourgain, H. Brezis, P. Mironescu}:
Another look at Sobolev spaces.
{\it Optimal control and partial differential equations. 
Proceedings of the conference, Paris, France}
(2001) 439--455.

\bibitem{CRS} {\sc L. Caffarelli, J.-M. Roquejoffre, O. Savin}: Nonlocal minimal surfaces. {\it Comm. Pure Appl. Math.} {\bf 63} (2010), 1111--1144.

\bibitem{CafSire} {\sc L. A. Caffarelli, J.-M Roquejoffre, Y. Sire}:
Variational problems for free boundaries for the fractional Laplacian. 
{\it J. Eur. Math. Soc. (JEMS)} {\bf 12} (2010), no. 5, 1151--1179.

\bibitem{CSV} {\sc L. Caffarelli, O. Savin, E. Valdinoci}: 
Minimization of a fractional perimeter-Dirichlet integral functional.
{\it Ann. Inst. H. Poincar\'{e} Anal. Non Lin\'{e}aire}
{\bf 32} (2015), no. 4, 901--924.

\bibitem{CAF-SIL} {\sc L. Caffarelli, L. Silvestre}:
An extension problem related to the fractional Laplacian.
{\it Commun. Partial Differ. Equations} {\bf 32} (2007), no. 8, 1245--1260.

\bibitem{CV1} {\sc L. Caffarelli, E. Valdinoci}: Uniform estimates and limiting arguments for nonlocal minimal surfaces. 
{\it Calc. Var. Partial Differential Equations} {\bf 41} (2011), no. 1--2, 203--240.

\bibitem{DAVILA} {\sc J. D\'avila}:
On an open question about functions of bounded variation.
{\it Calc. Var. Partial Differential Equations}
{\bf 15} (2002), no. 4, 519--527.

\bibitem{DeRo} {\sc D. De Silva, J.-M. Roquejoffre,}: 
Regularity in a one-phase free boundary problem for the fractional Laplacian. 
{\it Ann. Inst. H. Poincar\'{e} Anal. Non Lin\'{e}aire} {\bf 29} (2012), no.~3, 335--367.

\bibitem{DeSa} {\sc D. De Silva, O. Savin}: $C^{2,\alpha}$ 
regularity of flat free boundaries for the thin one-phase problem. 
{\it J. Differential Equations} {\bf 253} (2012), no. 8, 2420--2459.

\bibitem{DPV12} {\sc E. Di Nezza, G. Palatucci, E. Valdinoci}: Hitchhiker's guide to the fractional Sobolev spaces. {\it Bull. Sci. math.} {\bf 136} (2012), no.~5, 521--573.

\bibitem{DFPV} {\sc S. Dipierro, A. Figalli, G. Palatucci, E. Valdinoci}: 
Asymptotics of the $s$-perimeter as $s\searrow 0$. {\it Discrete Contin. Dyn. Syst.} {\bf 33} (2013), no. 7, 2777--2790.

\bibitem{DipVal} {\sc S. Dipierro, E. Valdinoci}: On a fractional harmonic replacement. 
{\it Discrete Contin. Dyn. Syst.} {\bf 35} (2015), no. 8, 3377--3392.

\bibitem{SERENA} {\sc S. Dipierro, E. Valdinoci}:
Continuity and density results for a one-phase nonlocal free boundary problem.
{\it Preprint} (2015),
available at {\tt http://arxiv.org/abs/1504.05569}

\bibitem{Dyda} {\sc B. Dyda}:
Fractional calculus for power functions and eigenvalues of the fractional Laplacian.
{\it Fract. Calc. Appl. Anal.} {\bf 15} (2012),
no. 4, 536--555.

\bibitem{FABES-KEN-SER} {\sc E. B. Fabes, C. E. Kenig, R. P. Serapioni}:
The local regularity of solutions of degenerate elliptic equations.
{\it Commun. Partial Differ. Equations} {\bf 7} (1982), 77--116.

\bibitem{MS} {\sc V. Maz'ya, T. Shaposhnikova}: On the Bourgain, Brezis, and Mironescu theorem concerning limiting embeddings of fractional Sobolev spaces. {\it J. Funct. Anal.} {\bf 195} (2002), 230--238.

\bibitem{SV1} {\sc O. Savin, E. Valdinoci}: Regularity of nonlocal minimal cones in dimension~2. {\it Calc. Var. Partial Differential Equations}, {\bf 48} (2013), no.~1--2, 33--39.

\bibitem{SV2} {\sc O. Savin, E. Valdinoci}: Some monotonicity results for minimizers in the calculus of variations. {\it J. Funct. Anal.} {\bf 264} (2013), no. 10, 2469--2496. 

\bibitem{SV-w} {\sc R. Servadei, E. Valdinoci}:
Weak and viscosity solutions of the fractional Laplace equation.
{\it Publ. Mat.} {\bf 58} (2014), no. 1, 133--154.

\bibitem{SIHA} {\sc L. Silvestre}:
H\"older estimates for solutions of integro-differential
equations like the fractional Laplace. 
{\it Indiana Univ. Math. J.} {\bf 55} (2006), no. 3, 1155--1174.

\bibitem{WEISS} {\sc G. S. Weiss}:
Partial regularity for a minimum problem with free boundary.
{\it J. Geom. Anal.} {\bf 9} (1999), no. 2, 317--326.

\end{thebibliography}
\end{document}